\numberwithin{equation}{section}
\def\acts{\curvearrowright}
\newif\ifproofread
\DeclarePairedDelimiter\abs{\lvert}{\rvert}
\DeclarePairedDelimiter\norm{\lVert}{\rVert}
\newcommand*\bigcdot{\mathpalette\bigcdot@{.5}}
\newcommand*\bigcdot@[2]{\mathbin{\vcenter{\hbox{\scalebox{#2}{$\m@th#1\bullet$}}}}}
\DeclarePairedDelimiter\floor{\lfloor}{\rfloor}
\newtheorem*{Lemma*}{Lemma}
\newtheorem*{Corollary*}{Corollary}
\newtheorem*{theorem*}{Theorem}
\newtheorem*{definition*}{Definition}
\newtheorem{theorem}{Theorem}[section]
\newtheorem{nota}{Notation}[section]
\newtheorem{definition}[theorem]{Definition}
\newtheorem{remark}[theorem]{Remark}
\newtheorem{Lemma}[theorem]{Lemma}
\newtheorem{Corollary}[theorem]{Corollary}
\newcommand{\thistheoremname}{}
\newtheorem*{genericthm}{\thistheoremname}
\newcommand{\CC}{\mathbb{C}}
\newcommand{\FF}{\mathbb{F}}
\newcommand{\NN}{\mathbb{N}}
\newcommand{\QQ}{\mathbb{Q}}
\newcommand{\RR}{\mathbb{R}}
\newcommand{\ZZ}{\mathbb{Z}}
\newcommand{\eps}{\epsilon}
\newcommand{\on}[1]{\operatorname{#1}}
\newcommand{\inner}[2]{\left\langle#1,#2\right\rangle}
\newcommand{\inn}[1]{\left<#1\right>}
\newcommand{\cov}[1]{\text{cov}(#1)}
\newcommand{\pid}{\mathrel{\ooalign{$\lneq$\cr\raise.22ex\hbox{$\lhd$}\cr}}}
\newcommand{\colf}[4]{\begin{bmatrix}
           #1 \\
           #2 \\
           #3 \\
           #4
         \end{bmatrix}}
\title{The Equidistribution of Grids of Rings of Integers in Number Fields of Degrees 3,4 and 5}
\author{Yuval Yifrach}
\begin{document}
\maketitle
\begin{abstract}
It was shown by M. Bhargava and P. Harron that for $n=3,4,5$, the shapes of rings of integers of $S_n$-number fields of degree $n$ become equidistributed in the space of shapes when the fields are ordered by discriminant. Instead of shapes, we correspond grids to each number field, which preserve more of the number fields' data. The space of grids is a fiber bundle over the space of shapes. We strengthen Bhargava-Harron's result by proving that the grids of rings of integers of $S_n$-number fields become equidistributed in the space of grids.
\end{abstract}
\section{Introduction}\label{Intro}
Let $\FF$ be a number field of degree $n$ and let $\mathcal O_{\FF}$ denote its ring of integers. Let $\sigma_1,\dots,\sigma_r$ denote the $r$ real embeddings of $\FF$ and $\tau_1,\overline{\tau_1},\dots,\tau_s,\overline{\tau_s}$ denote the $s$ pairs of complex embeddings of $\FF$. We define the signature of $\FF$ to be $(r,s)$. 
These embeddings induce a map $\sigma:\FF\rightarrow \RR^r\oplus\CC^s\cong \RR^n$ defined by:
\begin{equation*}
	\FF\ni\alpha\mapsto (\sigma_1(\alpha),\dots,\sigma_r(\alpha),\tau_1(\alpha),\dots,\tau_s(\alpha))\in \RR^r\oplus\CC^s.
\end{equation*}
We denote the image of $\mathcal O_{\FF}$ under $\sigma$ by $\Lambda(\FF)$.
The space $X_n$ of unimodular lattices in $\RR^n$ is commonly identified with $\on{SL}_n(\RR)/\on{SL}_n(\ZZ)$ and is endowed with the quotient topology. Given a lattice $\Lambda$ in $\RR^n$ we denote $\inn{\Lambda}$ to be the corresponding scaled unimodular lattice. It is well known that $\Lambda(\FF)$ is a lattice in $\RR^n$ and therefore $\inn{\Lambda(\FF)}\in X_n$.

It is very natural to study geometric aspects of the lattices $\Lambda(\FF)$ as $\FF$ varies. It is natural to expect that unless an obvious obstacle of an algebraic origin exists, various geometric aspects of the lattices $\Lambda(\FF)$ would behave "randomly" as $\FF$ varies. The first naive attempt is to consider $\inn{\Lambda(\FF)}$ as a point in $X_n$ equipped with the uniform ($\on{SL}_n(\RR)$-invariant) probability measure $m_{X_n}$, and expect that the countable collection $\inn{\Lambda(\FF)}$ equidistributes there as $\FF$ varies in a natural way. This naive attempt fails drastically because of an obvious "algebraic" reason as we shall presently explain. For convenience, assume that the signature of $\FF$ is $(n,0)$. Then the unimodular lattice $\inn{\Lambda(\FF)}$ contains the vector $v_{\overline 1}=\left(\frac{1}{\abs{\on{Disc}(\FF)}}\right)^{\frac{1}{2n}}(1,\dots,1)$ and therefore by Mahler's compactness criterion (see ~\cite{Mah}), we have that not only does $\Lambda(\FF)$ fail to equidistribute as $\FF$ varies, but in fact, any compact set in the space of lattices contains only finitely many such lattices (in other words $\inn{\Lambda(\FF)}$ diverges to infinity as $\FF$ varies). 

In the paper ~\cite{BhaPip}, the authors show, roughly, that orthogonal to the short vector $v_{\overline 1}$, the geometry of the lattice $\inn{\Lambda(\FF)}$ does look random as $\FF$ varies. In this paper we  consider slightly more refined ways to remedy the divergence to infinity. That is, we identify a closed sub-homogeneous space of $X_n$, which we call $Y^{\ell_{\overline 1}}_1$ (the reason for this notation will become clear later), and deform $\inn{\Lambda(\FF)}$ into $Y_1^{\ell_{\overline 1}}$ in a manner that still preserves some of the geometry of $\Lambda(\FF)$, and establish the equidistribution of the resulting points in $Y^{\ell_{\overline 1}}_1$ as $\FF$ varies. 

In fact, the deformation of $\inn{\Lambda(\FF)}$ into $Y^{\ell_{\overline 1}}_1$ will depend on an ad hoc choice of a linear direct complement $E$ of the line $\RR\cdot (1,\dots,1)\subset \RR^n$ (in the case of signature $(n,0)$, for the remaining signatures the vector will be replaced by a different one) and it is interesting to note that the answer to the question "how do the deformed lattices distribute in $Y^{\ell_{\overline 1}}_1$ as $\FF$ varies" depends on the choice of $E$, namely, the asymptotic behaviour is completely different for $E = V_0$ and for $E\neq V_0$ where $V_0$ is the orthogonal complement of $(1,\dots,1)$.

We remark that (as will be explained in more detail below), the deformed lattices in $Y^{\ell_{\overline 1}}_1$ record some of the original geometrical information of $\Lambda(\FF)$. For example, we show below that one can recover the main result of ~\cite{BhaPip} from our main results. As we are building heavily on the analysis and results from ~\cite{BhaPip}, this last remark should only be taken into account on the motivational level, supporting the "natural" nature of our deformation procedure which we shall now define.

For any line $\ell$ in $\RR^n$ denote:
\begin{equation}
	Y^{\ell}\vcentcolon=\{\Lambda\in X_n:\Lambda\cap \ell\text{ is a lattice in }\ell\},\text{ }Y_1^{\ell}\vcentcolon=\{\Lambda\in X_n:\Lambda\cap \ell\text{ is a unimodular lattice in }\ell\}.
\end{equation}
\begin{definition}\label{homMeasure}
A Borel probability measure $\mu$ on $X_n$ is called homogeneous if there exists a closed subgroup $H\leq \on{SL}_n(\RR)$ such that $\mu$ is invariant under $H$ and supported on an $H$ orbit. An orbit supporting a homogeneous measure is called periodic. Periodic orbits are closed in $X_n$.
\end{definition}
Each of the subsets $Y^{\ell},Y^{\ell}_1$ is an orbit of a closed subgroup of $\on{SL}_n(\RR)$. Indeed, $Y^{\ell}$ is the orbit of the subgroup of elements $g\in \on{SL}_n(\RR)$ satisfying $g\ell=\ell$ and $Y^{\ell}_1$ is the orbit of the subgroup of elements $g\in \on{SL}_n(\RR)$ satisfying $gv=v$ for every $v\in \ell$. The orbit $Y^{\ell}$ is dense in $X_n$ and the orbit $Y^{\ell}_1$ is closed in $X_n$ and in fact periodic. We denote the homogeneous measure on $Y^{\ell}_1$ by $m_{Y^{\ell}_1}$.
 We will use the following normalization group to deform elements of $Y^{\ell}$ into $Y^{\ell}_1$:
\begin{definition}\label{vagner}
	Given subspaces $E_1,E_2\subset \RR^n$ such that $E_1\oplus E_2=\RR^n$ and $t\in \RR$, we define a linear operator $g_{E_1,E_2}^t\in \operatorname{SL}_n(\RR)$ by $g(x)=e^{t/dim(E_1)}x$, $g(y)=e^{-t/dim(E_2)}y$ for every $x\in E_1,y\in E_2$. The set $\{g_{E_1,E_2}^t\}_{t\in \RR}$ is a group, which we shall call the {$E_1,E_2$ normalization group}.
\end{definition}

Define the vector $\overline 1\vcentcolon=(1,\dots,1,1,0,\dots,1,0)\in \RR^n$ as the image of $(1_{\RR},\dots,1_{\RR},1_{\CC},\dots,1_{\CC})\in \RR^r\oplus \CC^s$ under the identification with $\RR^n$. Then $\overline 1$ is an element of $\Lambda(\FF)$ because it is the image of the identity in $\FF$ under $\sigma$. Denote $\ell_{\overline 1}=\{t\overline 1:t\in \RR\}$ and note that $\inn{\Lambda(\FF)}\in Y^{\ell_{\overline 1}}$. Let $\inner{\cdot}{\cdot}$ denote the standard inner product on $\RR^n$. For any subspace $E\subset \RR^n$, define $E^{\perp}=\{x\in \RR^n:\inner{x}{E}=0\}$ and $V_0=\ell_{\overline 1}^{\perp}$.

Let $E$ be a subspace of $\RR^n$ of co-dimension 1 satisfying $E\oplus \ell_{\overline 1}=\RR^n$ and let $t\in \RR$ be such that:
\begin{equation}\label{orangeJuice}
	q_E(\FF)\vcentcolon=g^t_{E,\ell_{\overline 1}}\inn{\Lambda(\FF)}\in Y_1^{\ell_{\overline 1}}
\end{equation}
($t$ will be the solution of $e^{-t}\operatorname{cov}(\Lambda(\FF))^{-1/n}\sqrt{n}=1$). Therefore $q_{E}(\cdot)$ deforms an element of $Y^{\ell_{\overline 1}}$ into an element of $Y_1^{\ell_{\overline 1}}$.
The following definition parametrizes a subset of the number fields using the discriminant. 
\begin{definition}
	For any $T>0$ and signature $(n-2s,s)$, let $\mathcal F^s_n(T)$ be the set of isomorphism classes of degree $n$ number fields $\FF$ of signature $(n-2s,s)$, with discriminant $\leq T$ in absolute value such that the Galois group of the normal closure of $\FF$ is $S_n$. It is well known that $\mathcal F_n(T)$ is finite for any $T>0$.
\end{definition}
So far we have constructed a correspondence between number fields and elements of $Y_1^{\ell_{\overline 1}}$, which are essentially lattices with some extra properties. As appeared earlier, the aim of this paper is to analyze the connection between number fields and lattices given by this correspondence. More concretely, we will analyze the asymptotic distribution as $T\rightarrow \infty$ of $\{q_E(\FF)\}_{\FF\in \mathcal F_n^s(T)}$ for the different choices for $E$. The answer will turn out to be dependent upon the choice of $E$ and to imply Bhargava-Harron's main result in ~\cite{BhaPip}.

\subsection{New Equidistribution Results for Number Fields of Degrees 3,4,5}
In this subsection we state our new results which implement the philosophy described in the first part of this introduction.
\begin{theorem}\label{unif3} 
Let $n = 3,4,5$ and let $(r,s)$ be a signature. Let $E\subset\RR^n$ be a subspace such that $E\oplus \ell_{\overline 1}=\RR^n$ and $E\neq V_0$. Then:
\begin{equation}
	\frac{1}{\abs{\mathcal F^s_n(T)}}\sum_{\FF\in \mathcal F^s_n(T)}\delta_{q_E(\FF)}\longrightarrow m_{Y_1^{\ell_{\overline 1}}}\text{ weakly as }T\longrightarrow \infty.
\end{equation}
\end{theorem}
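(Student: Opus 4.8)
The plan is to reduce the equidistribution of $\{q_E(\FF)\}$ in $Y_1^{\ell_{\overline 1}}$ to the equidistribution results of Bhargava--Harron together with an equidistribution statement on the fibers of a natural fiber bundle structure. First I would describe $Y_1^{\ell_{\overline 1}}$ explicitly: an element $\Lambda\in Y_1^{\ell_{\overline 1}}$ is a unimodular lattice containing a primitive vector spanning a unimodular sublattice of $\ell_{\overline 1}$, so modding out by $\ell_{\overline 1}\cap\Lambda$ gives a unimodular lattice in $\RR^n/\ell_{\overline 1}\cong\RR^{n-1}$, the \emph{shape} of $\Lambda$. This exhibits $Y_1^{\ell_{\overline 1}}$ as a fiber bundle over $X_{n-1}$ (the space of shapes, modulo the relevant orthogonal symmetries as in \cite{BhaPip}), with fiber a torus $\RR^{n-1}/\ZZ^{n-1}$ recording the "offset" of the lattice relative to the chosen complement. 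The homogeneous measure $m_{Y_1^{\ell_{\overline 1}}}$ disintegrates as $m_{X_{n-1}}$ on the base times Haar measure on each torus fiber. The content of the theorem is then: (a) the base projections (shapes) equidistribute — this is exactly Bhargava--Harron — and (b) conditionally on the shape, the fiber coordinates equidistribute on the torus, and moreover jointly with the base.

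The key step is to compute the fiber coordinate of $q_E(\FF)$ in terms of concrete arithmetic data of $\FF$. Since $q_E(\FF)=g^t_{E,\ell_{\overline 1}}\inn{\Lambda(\FF)}$, and $\Lambda(\FF)$ contains $\overline 1$ as the image of $1\in\Oo_\FF$, the torus coordinate should be read off from how a chosen integral basis of $\Oo_\FF$ sits modulo $\ZZ\cdot 1$ after projecting along $E$ — that is, from the images in $\Oo_\FF/\ZZ$ of the basis elements, transported to $\RR^{n-1}$ via the $E$-projection. Concretely I would fix a $\ZZ$-basis $1=\omega_0,\omega_1,\dots,\omega_{n-1}$ of $\Oo_\FF$; then $\sigma(\omega_i) = c_i\overline 1 + e_i$ with $e_i\in E$, and the residues of the $c_i$ modulo $1$ (suitably normalized) give the fiber point. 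The crucial structural input is the parametrization of $S_n$-number fields of degree $n=3,4,5$ by integral orbits of prehomogeneous vector spaces (Delone--Faddeev / Bhargava), under which $\Oo_\FF$ together with the distinguished element $1$ corresponds to explicit coordinates; I would show the fiber coordinate is (the reduction of) a polynomial/rational function of these coordinates that, crucially, depends on a genuinely different combination of them than the shape does — this is precisely the point where the hypothesis $E\neq V_0$ enters, since for $E = V_0$ the projection is the orthogonal one and the would-be fiber coordinate degenerates to something already determined by the shape (that is the "completely different asymptotic behaviour" alluded to in the introduction).

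Given this, the proof would proceed by the standard mechanism used in \cite{BhaPip}: reduce the count over $\FF\in\mathcal F^s_n(T)$ to a lattice-point count in the fundamental domain of the relevant prehomogeneous vector space, apply Bhargava's averaging/geometry-of-numbers technique (with the sieve to $S_n$-fields and maximal orders), and identify the limiting measure by testing against a separating family of functions on $Y_1^{\ell_{\overline 1}}$ — e.g. products $f(\text{shape})\cdot\chi(\text{fiber})$ where $\chi$ ranges over characters of the torus. For the trivial character one recovers Bhargava--Harron's equidistribution of shapes; for a nontrivial character $\chi$ one must show the corresponding weighted count is $o(\abs{\mathcal F^s_n(T)})$, which amounts to cancellation coming from the fiber coordinate being equidistributed on the torus. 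The main obstacle — and the technical heart of the paper — is establishing this cancellation: one needs that the map from integral PHV orbits to the torus coordinate is "equidistributed" when averaged over the expanding region, which typically requires either an exponential-sum / Poisson-summation estimate over the lattice points in the fundamental domain, or an ergodic-theoretic argument exploiting that the fiber coordinate is controlled by a unipotent-type flow transverse to the shape. I expect the $E\neq V_0$ hypothesis to be used precisely to guarantee that the relevant linear form defining the fiber coordinate is not proportional to one already "seen" by the discriminant/shape data, so that no arithmetic obstruction (analogous to the short vector $v_{\overline 1}$ obstruction in the introduction) prevents equidistribution on the fiber.
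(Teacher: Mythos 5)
Your proposal matches the paper's strategy essentially exactly: view $Y_1^{\ell_{\overline 1}}$ as a torus bundle over $X_{n-1}$ via the grid identification, handle the base with Bhargava--Harron-style geometry of numbers (equidistribution in $X_{n-1}$ itself, a mild strengthening of their stated shape result obtained from the proof of their Proposition 12), and establish equidistribution of the torus fiber coordinate --- a polynomial in the prehomogeneous-vector-space coordinates --- via a Weyl-criterion/exponential-sum argument, with the $E\neq V_0$ hypothesis entering precisely to ensure this polynomial is nondegenerate. The paper makes this precise through Lemma \ref{structure} (which identifies the fiber coordinate with the explicit function $f_n^E$), Lemma \ref{calc} (which verifies nondegeneracy case by case for $n=3,4,5$), and the cited Weyl-type Theorem \ref{myWeyl}; the joint convergence on basic sets is then assembled in Equations (\ref{ha})--(\ref{14:15}).
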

\begin{theorem}\label{torsion2}
	Let $n=3,4,5$ and let $(n-2s,s)$ be a signature. Denote $\on{Div}(n)=\{1\leq k\leq n:k\mid n\}$ to be the set of divisors of $n$. Then for any $k\in \on{Div}(n)$ there exists a periodic orbit $F^{(k)}\subset Y_1^{\ell_{\overline 1}}$, $\ell=\ell(k)\in \NN$, disjoint closed connected subsets $F^{(k)}_1,\dots,F^{(k)}_{\ell(k)}\subset F^{(k)}$ and nonnegative scalars $s_{k,\ell};\ell=1,\dots,\ell(k)$ not all zero, such that:
 \begin{equation}
	\frac{1}{\abs{\mathcal F^s_n(T)}}\sum_{\FF\in \mathcal F^s_n(T)}\delta_{q_{V_0}(\FF)}\longrightarrow \sum_{k\in \on{Div}(n)}\sum_{\ell=1}^{\ell(k)}s_{k,\ell}\mu^{(k)}\mid_{F^{(k)}_{\ell}}\text{ weakly as }T\longrightarrow \infty.
\end{equation}
where $\mu^{(k)}$ is the homogenous measure supported on the periodic orbit $F^{(k)}$.
\end{theorem}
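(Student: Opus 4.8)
The plan is to run the argument behind Theorem~\ref{unif3} separately on finitely many subfamilies of $\mathcal F^s_n(T)$, after isolating the discrete invariant of $\FF$ that decides which part of $Y_1^{\ell_{\overline 1}}$ can contain $q_{V_0}(\FF)$. The point is that $E=V_0$ is special: $g^t_{V_0,\ell_{\overline 1}}$ acts on $\RR^n=\ell_{\overline 1}\oplus V_0$ by a pure homothety on each summand, so it commutes with $\on{SO}(V_0)$ and leaves the orthogonal projection of $\inn{\Lambda(\FF)}$ to $V_0$ unchanged up to scaling; hence $q_{V_0}(\FF)$ records, besides the shape lattice of $\mathcal O_{\FF}$ inside $V_0$, only ``vertical'' data, which we make explicit. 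For $\alpha\in\mathcal O_{\FF}$ one has $\inner{\sigma(\alpha)}{\overline 1}=\on{Tr}_{\FF/\QQ}(\alpha)$, and unwinding the normalisation in~\eqref{orangeJuice} (which forces $e^{-t}=\on{cov}(\Lambda(\FF))^{1/n}/\sqrt n$) shows that the $\ell_{\overline 1}$–coordinate of the image of $\alpha$ in $q_{V_0}(\FF)$, measured against the generator $\overline 1/\sqrt n$ of $q_{V_0}(\FF)\cap\ell_{\overline 1}$, is exactly $\on{Tr}(\alpha)/n$. Writing $d(\FF)\in\on{Div}(n)$ for the positive generator of the ideal $\on{Tr}(\mathcal O_{\FF})\subseteq\ZZ$ and $k(\FF)\vcentcolon=n/d(\FF)\in\on{Div}(n)$, it follows that every vector of $q_{V_0}(\FF)$ has $\ell_{\overline 1}$–coordinate in $\tfrac1{k(\FF)}\ZZ$, with $k(\FF)$ minimal; thus $q_{V_0}(\FF)$ lies in the ``level-$k(\FF)$'' locus of the space of grids.

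I would then identify this locus as a periodic orbit. With $Y_1^{\ell_{\overline 1}}\cong H/\Gamma$, $H\cong\RR^{n-1}\rtimes\on{SL}_{n-1}(\RR)$, $\Gamma\cong\ZZ^{n-1}\rtimes\on{SL}_{n-1}(\ZZ)$, the set $F^{(k)}$ of grids whose level character $\bar\Lambda\to\tfrac1n\ZZ/\ZZ$ has order exactly $k$ is the orbit of the block-diagonal copy of $\on{SL}_{n-1}(\RR)$ in $H$ through a standard level-$k$ grid $\Lambda^{(k)}$; because $n-1\in\{2,3,4\}$, the group $\on{SL}_{n-1}(\ZZ)$ acts transitively on the elements of exact order $k$ in $(\ZZ/k\ZZ)^{n-1}$, so $F^{(k)}$ is a single orbit, its stabiliser in $\on{SL}_{n-1}(\ZZ)$ is a congruence subgroup of finite index, and hence $F^{(k)}$ is periodic; let $\mu^{(k)}$ be its homogeneous probability measure, and note the $F^{(k)}$ $(k\in\on{Div}(n))$ are pairwise disjoint with $\bigcup_k F^{(k)}\supseteq\{q_{V_0}(\FF):\FF\in\mathcal F^s_n(T)\}$. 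Partitioning $\mathcal F^s_n(T)=\bigsqcup_{k}\mathcal F^s_n(T;k)$ by the value of $k(\FF)$, the empirical measure becomes
\begin{equation*}
	\frac{1}{\abs{\mathcal F^s_n(T)}}\sum_{\FF\in \mathcal F^s_n(T)}\delta_{q_{V_0}(\FF)}=\sum_{k\in\on{Div}(n)}\frac{\abs{\mathcal F^s_n(T;k)}}{\abs{\mathcal F^s_n(T)}}\,\nu_{k,T},
\end{equation*}
where $\nu_{k,T}\vcentcolon=\abs{\mathcal F^s_n(T;k)}^{-1}\sum_{\FF\in\mathcal F^s_n(T;k)}\delta_{q_{V_0}(\FF)}$, and it suffices to study each summand. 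The condition $k(\FF)=k$ — i.e. $(n/k)\mid\on{Tr}(\mathcal O_{\FF})$ with no proper refinement — is a union of finitely many local conditions at the primes dividing $n$, so $\abs{\mathcal F^s_n(T;k)}/\abs{\mathcal F^s_n(T)}$ converges as $T\to\infty$ to a constant $p_k\ge0$ with $\sum_k p_k=1$, by the same sieve and mass-formula input used in~\cite{BhaPip} together with inclusion--exclusion over $\on{Div}(n)$ (in particular $p_n>0$, the generic case).

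It then remains to prove that each $\nu_{k,T}$ converges weakly to a measure supported on $F^{(k)}$ of the form $\sum_{\ell=1}^{\ell(k)}s'_{k,\ell}\,\mu^{(k)}\!\mid_{F^{(k)}_\ell}$, for disjoint closed connected $F^{(k)}_\ell\subseteq F^{(k)}$ and constants $s'_{k,\ell}\ge0$; granting this one sets $s_{k,\ell}\vcentcolon=p_k s'_{k,\ell}$ and combines, the scalars not all vanishing because the total mass is $1$, and ``no escape of mass'' being inherited from the proof of Theorem~\ref{unif3} and of~\cite{BhaPip} (the level condition is both open and closed and does not affect the geometry-of-numbers estimates). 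For the convergence of $\nu_{k,T}$ I would feed the restricted family $\mathcal F^s_n(T;k)$ into the machinery proving Theorem~\ref{unif3}: for $f$ continuous of compact support, that argument rewrites $\lim_T\nu_{k,T}(f)$ as the integral of $f$ against the push-forward to $F^{(k)}$ of a Haar measure on $\on{SL}_{n-1}(\ZZ)\backslash\on{SL}_{n-1}(\RR)$, weighted by the archimedean density functions of the orbit count; since $g^t_{V_0,\ell_{\overline 1}}$ does not skew $V_0$, the $V_0$–part of $q_{V_0}(\FF)$ is the shape lattice realised up to the finite group $W\le\on{O}(V_0)$ of reorderings and conjugations of the archimedean places of $\FF$, and the vertical part is the fixed order-$k$ character.

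The hard part will be exactly this last step: unlike for $E\neq V_0$, the deformation $q_{V_0}$ does not randomise the orientation of $\Lambda(\FF)$ in $\RR^n$, so one must control the $\on{O}(V_0)/W$–worth of residual archimedean data directly. Concretely I expect to have to show that the image in $F^{(k)}$ of the relevant archimedean fundamental domain (for the given signature $(n-2s,s)$) is a finite union of full-dimensional closed connected pieces $F^{(k)}_\ell$, and that the push-forward of Haar is locally constant — hence a constant multiple of $\mu^{(k)}$ — on each piece rather than merely absolutely continuous; the mixing of the $\on{SL}_{n-1}(\RR)$–action on $F^{(k)}$ (Howe--Moore) then pins the restriction down. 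This is what forces the limit to be the finite combination $\sum_{k}\sum_{\ell}s_{k,\ell}\,\mu^{(k)}\!\mid_{F^{(k)}_\ell}$, and actually producing $\ell(k)$, the pieces $F^{(k)}_\ell$ and the weights $s_{k,\ell}$ requires a careful accounting of the signature and of the relevant archimedean volume ratios.
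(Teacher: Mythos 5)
Your high-level picture is right: you correctly isolate the arithmetic invariant (the trace ideal $\on{Tr}(\mathcal O_{\FF})$ and hence the $n$-torsion level of the translating vector of $q_{V_0}(\FF)$), you correctly identify the periodic orbits $Y_n(k)$ as the target loci, and you correctly observe that the shape part still equidistributes. But the place you flag as ``the hard part'' is exactly where your sketch departs from a workable argument, and the tools you reach for there (Howe--Moore mixing, a ``locally constant push-forward of Haar'', control of an $\on{O}(V_0)/W$ ambiguity) are not what makes this go through. The paper's mechanism is more elementary and more specific. Via Lemma~\ref{structure} the vertical part of the grid is given by the \emph{explicit} function $f_w(p)=\frac1n\sum_i\inner{\overline 1}{\overline\alpha_i(p)}\,w_i$, and the crucial point (Equation~(\ref{0:49}), drawn from the multiplication tables) is that each $\inner{\overline 1}{\overline\alpha_i(p)}$ is an integer-coefficient polynomial in the coordinates of $p\in V_{\ZZ}$. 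Hence $f_w\bmod\Lambda_w$ depends only on $p\bmod n$, so for each of the finitely many torsion points $\zeta\in\frac1n\Lambda_w/\Lambda_w$ the fibre $\{p:f_w(p)=\zeta\}$ is a \emph{congruence set} $C(\zeta)$ modulo $n$. Feeding $C(\zeta)\cap\{\text{irreducible, maximal}\}$ into the same Davenport-type counting that proves Theorem~\ref{unif3g} (Corollary~\ref{CRT} handles finite congruence sets, and the analogue of Equation~(\ref{ha}) factors the empirical measure into the shape-count ratio times the congruence density $\alpha_\zeta$) immediately yields $\mu_T(A_{\zeta,S})\to\alpha_\zeta\,m_{X_{n-1}}(S)$. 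No dynamical input and no separate treatment of archimedean orientation is needed.

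Two further, more structural remarks. First, you partition by the \emph{order} $k(\FF)$ of the torsion vector, and then propose an inclusion--exclusion over $\on{Div}(n)$; the cleaner decomposition, and the one that actually matches a congruence set, is by the specific torsion point $\zeta$ itself. The grouping by $k=\on{ord}(\zeta)$ is done only at the very end to package the answer in the form stated. Second, the ``pieces'' $F^{(k)}_\ell$ are not cut out by an archimedean fundamental domain for the signature, as you guess; they are the sets $F_\zeta=\overline{\Sigma_{n-1}\times_{\Sigma_{n-1}}\{\zeta\}}$, which are proper closed subsets of $Y_n(k)$ precisely because the fixed $\Sigma_{n-1}$ is a fundamental domain for $\on{SL}_{n-1}(\ZZ)$ rather than for the smaller congruence subgroup stabilizing $\Lambda_w+\zeta$; that index discrepancy is what produces $\ell(k)>1$. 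So while your write-up lands on the right shape of the answer, as it stands it does not prove the convergence of the $\nu_{k,T}$: you would need to replace the vague last paragraph by the congruence-set argument above.
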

The two Theorems above are the main results of this paper. In order to motivate them we explain how each of them implies Bhargava-Harron's main result in ~\cite{BhaPip} which is stated below as Theorem \ref{BhaPipMain}. 

First we give additional background required to state ~\cite[Theorem 1]{BhaPip}. In the above setting, consider the orthogonal projection of $\Lambda(\FF)$ on $V_0$ and denote it by $P\Lambda(\FF)$. Fix a linear isometry $\phi$ between $V_0$ and $\RR^{n-1}$ and use it to identify $P\Lambda(\FF)$ with its image under $\phi$. From now on we think of $P\Lambda(\FF)$ as a subset of $\RR^{n-1}$. Since $\overline 1\in \Lambda(\FF)$, $P\Lambda(\FF)$ is a lattice in $\RR^{n-1}$. The shape of $\FF$, denoted $s(\FF)$, is defined as the $\on{O}_{n-1}(\RR)$-orbit of the lattice $\inn{P\Lambda(\FF)}$ inside $X_{n-1}$. It is thus an element of the double quotient $\on{O}_{n-1}(\RR)\setminus \on{SL}_{n-1}(\RR)/\on{SL}_{n-1}(\ZZ)$. This double quotient is called the space of $n-1$-dimensional shapes and is denoted $\mathcal S_{n-1}$. Let $m_{\mathcal S_{n-1}}$ be the measure on $\mathcal S_{n-1}$ coming from the Haar measure on $\on{SL}_{n-1}(\RR)$. Bhargava-Harron prove:
\begin{theorem}[~\cite{BhaPip}, Theorem 1]\label{BhaPipMain}
	Let $n = 3,4,5$. Then:
\begin{equation}
	\frac{1}{\abs{\cup_s\mathcal F^s_n(T)}}\sum_{\FF\in \cup _{s} \mathcal F^s_n(T)}\delta_{s(\FF)}\longrightarrow m_{\mathcal S_{n-1}}\text{ weakly as }T\longrightarrow \infty.
\end{equation}
\end{theorem}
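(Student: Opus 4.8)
The statement to prove is Theorem \ref{BhaPipMain}, and the point of the exercise is to derive it as a corollary of Theorems \ref{unif3} and \ref{torsion2}. The plan is to show that the shape $s(\FF)$ is obtained from the grid $q_E(\FF)$ by a fixed continuous map, so that equidistribution of grids pushes forward to equidistribution of shapes. Concretely, I would first record the following: for \emph{any} admissible complement $E$, the orthogonal projection $P\colon \RR^n\to V_0$ maps $\Lambda(\FF)$ to $P\Lambda(\FF)$, and this projection factors through the deformation $g^t_{E,\ell_{\overline 1}}$ up to the scaling that makes the result unimodular. That is, there is a continuous, $\on{SL}_{n-1}(\RR)$-equivariant map
\begin{equation*}
	\pi\colon Y_1^{\ell_{\overline 1}}\longrightarrow \on{O}_{n-1}(\RR)\backslash \on{SL}_{n-1}(\RR)/\on{SL}_{n-1}(\ZZ)=\Ss_{n-1},
\end{equation*}
sending a lattice $\Lambda\in Y_1^{\ell_{\overline 1}}$ to the $\on{O}_{n-1}$-class of $\inn{P(\Lambda)}$, and it satisfies $\pi(q_E(\FF))=s(\FF)$. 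The verification that $\pi$ is well defined and continuous is a routine unwinding of the definitions: $Y_1^{\ell_{\overline 1}}$ consists of lattices containing $\overline 1$ as a primitive vector with the induced covolume $\sqrt n$ on $\ell_{\overline 1}$, so projecting mod $\ell_{\overline 1}$ yields a lattice in $V_0\cong\RR^{n-1}$ depending continuously on $\Lambda$, and quotienting by $\on{O}_{n-1}(\RR)$ kills the dependence on the choice of isometry $\phi$.

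Next I would check the key compatibility $\pi\circ q_E = s$. The deformation $q_E(\FF)=g^t_{E,\ell_{\overline 1}}\inn{\Lambda(\FF)}$ scales $\ell_{\overline 1}$ by $e^{t/1}$ and $E$ by $e^{-t/(n-1)}$; since $E$ is a complement of $\ell_{\overline 1}$ but not necessarily $V_0$, the projection $P$ restricted to $E$ is a linear isomorphism onto $V_0$, and $P\circ g^t_{E,\ell_{\overline 1}} = e^{-t/(n-1)} P$ on $E$, i.e. the operator $q_E(\FF)$ changes the projected lattice $P\Lambda(\FF)$ only by a global positive scalar. Hence $\inn{P(q_E(\FF))}=\inn{P\Lambda(\FF)}$ as unimodular lattices, and their $\on{O}_{n-1}$-classes coincide. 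This gives $\pi(q_E(\FF))=s(\FF)$ for every admissible $E$, in particular for $E=V_0$ and for $E\neq V_0$.

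Now the derivation splits. Using Theorem \ref{unif3} with any fixed $E\neq V_0$ and any fixed signature $(r,s)$: pushing the measures forward by the continuous map $\pi$ and using that weak convergence is preserved under pushforward by continuous maps, we get
\begin{equation*}
	\frac{1}{\abs{\Ff^s_n(T)}}\sum_{\FF\in\Ff^s_n(T)}\delta_{s(\FF)}\longrightarrow \pi_*\,m_{Y_1^{\ell_{\overline 1}}}\text{ weakly as }T\to\infty.
\end{equation*}
It remains to identify $\pi_* m_{Y_1^{\ell_{\overline 1}}}$ with $m_{\Ss_{n-1}}$; this is a statement about Haar measures on homogeneous spaces — $\pi$ intertwines the $\on{SL}_{n-1}(\RR)$-actions, sends the $\on{SL}_{n-1}(\RR)$-invariant probability measure on the periodic orbit $Y_1^{\ell_{\overline 1}}\cong \on{SL}_{n-1}(\RR)/\on{SL}_{n-1}(\ZZ)$ to the $\on{SL}_{n-1}(\RR)$-invariant probability measure on $\Ss_{n-1}$, and by uniqueness of the invariant probability measure these agree. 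Finally, summing over the finitely many signatures $s$ (with the weights $\abs{\Ff^s_n(T)}/\abs{\cup_s\Ff^s_n(T)}$, which are bounded and whose limits are irrelevant because every term converges to the \emph{same} limit $m_{\Ss_{n-1}}$) yields Theorem \ref{BhaPipMain}. Alternatively, Theorem \ref{torsion2} gives the same conclusion for $E=V_0$: there $\pi_*$ must collapse the decomposition $\sum_{k,\ell}s_{k,\ell}\mu^{(k)}|_{F^{(k)}_\ell}$ back onto $m_{\Ss_{n-1}}$, since $\pi(q_{V_0}(\FF))=s(\FF)$ forces the two limits to coincide; this is the sense in which the finer grid statement refines Bhargava--Harron while the projection $\pi$ ``forgets'' the extra structure. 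I expect the only genuinely delicate point to be the measure identification $\pi_* m_{Y_1^{\ell_{\overline 1}}}=m_{\Ss_{n-1}}$ — specifically checking that $\pi$ really is the quotient map $\on{SL}_{n-1}(\RR)/\on{SL}_{n-1}(\ZZ)\to \on{O}_{n-1}(\RR)\backslash\on{SL}_{n-1}(\RR)/\on{SL}_{n-1}(\ZZ)$ under the natural identifications, and hence that it is measure-preserving for the normalized Haar measures; everything else is bookkeeping.
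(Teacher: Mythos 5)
Your proposal is correct and follows essentially the same route as the paper: the paper also defines the projection (factored as $\pi_1:Y_1^{\ell_{\overline 1}}\to X_{n-1}$, $\pi_1(\Lambda)=\inn{P_{V_0}\Lambda}$, followed by $\pi_2:X_{n-1}\to\mathcal S_{n-1}$), checks $s(\FF)=\pi_2\circ\pi_1(q_E(\FF))$ via the same observation that $P_{V_0}\circ g^t_{E,\ell_{\overline 1}}$ is a scalar multiple of $P_{V_0}$, identifies $(\pi_2\circ\pi_1)_*m_{Y_1^{\ell_{\overline 1}}}=m_{\mathcal S_{n-1}}$ by invariance, pushes forward Theorem \ref{unif3}, and averages over signatures. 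Two small remarks: your exponents for $g^t_{E,\ell_{\overline 1}}$ are reversed relative to Definition \ref{vagner} (it scales $E$ by $e^{t/(n-1)}$ and $\ell_{\overline 1}$ by $e^{-t}$), though the argument only needs that the action on $E$ is scalar; and for the $E=V_0$ route the paper actually proves $(\pi_1)_*\mu_i=m_{X_{n-1}}$ directly via a commutative diagram and $G_0(\ell_{\overline 1})$-invariance of each $\mu_i$, giving a genuinely independent derivation from Theorem \ref{torsion2}, whereas you infer it a posteriori from the $E\neq V_0$ case.
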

In a nutshell, we will explain why $q_E(\FF)$ contains more information than $s(\FF)$ and how to use it to deduce Bhargava-Harron's result from either one of Theorems \ref{unif3},\ref{torsion2}. Indeed, there are natural projections:
\begin{equation}
	Y_{1}^{\ell_{\overline 1}}\xrightarrow{\pi_1} X_{n-1}\xrightarrow{\pi_2} \mathcal S_{n-1}
\end{equation}
defined for any $\Lambda\in Y_1^{\ell_{\overline 1}}$ by $\pi_1(\Lambda)=\inn{P_{V_0}(\Lambda)}$ and for any $\Lambda\in X_{n-1}$ by $\pi_2(\Lambda)=\on{O}_{n-1}(\RR)\Lambda$. Recall that $q_E(\FF)$ was defined as $g^t_{E,\ell_{\overline 1}}\inn{\Lambda(\FF)}$ where $t\in \RR$ was chosen to make sure that $g^t_{E,\ell_{\overline 1}}\inn{\Lambda(\FF)}\in Y_1^{\ell_{\overline 1}}$. By definition of $g_{E,\ell_{\overline 1}}^t$, $P_{V_0}(\Lambda(\FF))$ and $P_{V_0}(g_{E,\ell_{\overline 1}}^t\inn{\Lambda(\FF)})$ have the same projection on $V_0$ up to scalar multiple. Since $s(\FF)$ was defined as the $O_{n-1}$ orbit of this projection (after normalizing by the covolume) we deduce that:
\begin{equation}\label{19:42}
	s(\FF)=\pi_2\circ\pi_1(q_E(\FF)).
\end{equation}
Let $m_{X_{n-1}}$ be the natural measure on $X_{n-1}$ coming from $\on{SL}_{n-1}(\RR)$. Denote $(\pi_1)_*m_{Y_{1}^{\ell_{\overline 1}}}$ and $(\pi_2)_*m_{X_{n-1}}$ to be the pushforward measures. It is simple to check that:
\begin{equation}\label{rivk}
	(\pi_1)_*m_{Y_{1}^{\ell_{\overline 1}}}=m_{X_{n-1}}\text{ and }	(\pi_2)_*m_{X_{n-1}}=m_{\mathcal S_{n-1}}.
\end{equation}
Moreover, the probability measures $\mu_i=\frac{1}{\mu(F_i)}\mu\mid_{F_i}$ appearing in Theorem \ref{torsion2} will turn to be invariant under the action of the group $G_0(\ell_{\overline 1}):=\{g\in \on{SL}_n(\RR):gv=v\text{ for all }v\in \ell_{\overline 1}\text{ and }gV_0=V_0\}$ and the following diagram will commute:
\begin{equation}
\begin{tikzcd}
G_0(\ell_{\overline 1}) \acts \arrow[swap]{d}{g\mapsto \phi\circ (g\mid_{V_0})\circ\phi^{-1}} & F_i \arrow{d}{\pi_1} \\
\on{SL}_{n-1}(\RR) \acts  & X_{n-1}.
\end{tikzcd}
\end{equation}
We deduce that for any $i$:
\begin{equation}\label{rivka}
	(\pi_1)_*\mu_i=m_{X_{n-1}}
\end{equation}
which together with Equation (\ref{rivk}) implies for any $i$:
\begin{equation}\label{19:41}
	(\pi_2\circ\pi_1)_*m_{Y_{1}^{\ell_{\overline 1}}}=(\pi_2\circ\pi_1)_*\mu_i=m_{\mathcal S_{n-1}}.
\end{equation}
Fix a signature $(r,s)$. By equation (\ref{19:42}) for any $T>0$: 
\begin{equation}
	\frac{1}{\abs{\mathcal F^s_n(T)}}\sum_{\FF\in \mathcal F^s_n(T)}\delta_{s(\FF)}=\frac{1}{\abs{\mathcal F^s_n(T)}}\sum_{\FF\in \mathcal F^s_n(T)}\delta_{\pi_2\circ\pi_1(q_E(\FF))}=(\pi_2\circ\pi_1)_*\left(\frac{1}{\abs{\mathcal F^s_n(T)}}\sum_{\FF\in \mathcal F^s_n(T)}\delta_{q_E(\FF)}\right)
\end{equation}
and by Theorem \ref{unif3}, equation (\ref{19:41}) and the continuity of the pushforward operation:
\begin{equation}
	\frac{1}{\abs{\mathcal F^s_n(T)}}\sum_{\FF\in \mathcal F^s_n(T)}\delta_{s(\FF)}\longrightarrow (\pi_2\circ\pi_1)_*m_{Y_{1}^{\ell_{\overline 1}}}=m_{\mathcal S_{n-1}}\text{ weakly as }T\longrightarrow\infty.
\end{equation}
Averaging the above equation over all $s=0,\dots,\floor{n/2}$ recovers Bhargava-Harron's result. Therefore Theorem \ref{unif3} implies Bhargava-Harron's Theorem \ref{BhaPipMain}. To show that Theorem \ref{torsion2} also implies Theorem \ref{BhaPipMain} note that for any $T>0$:
\begin{equation}
	\frac{1}{\abs{\mathcal F^s_n(T)}}\sum_{\FF\in \mathcal F^s_n(T)}\delta_{s(\FF)}=\frac{1}{\abs{\mathcal F^s_n(T)}}\sum_{\FF\in \mathcal F^s_n(T)}\delta_{\pi_2\circ\pi_1(q_{V_0}(\FF))}=(\pi_2\circ\pi_1)_*\left(\frac{1}{\abs{\mathcal F^s_n(T)}}\sum_{\FF\in \mathcal F^s_n(T)}\delta_{q_{V_0}(\FF)}\right)
\end{equation}
and by Theorem \ref{torsion2}, equation (\ref{19:41}) and the continuity of the pushforward operation:
\begin{equation}
	\frac{1}{\abs{\mathcal F^s_n(T)}}\sum_{\FF\in \mathcal F^s_n(T)}\delta_{s(\FF)}\longrightarrow \sum_{i=1}^ks_i\mu(F_i)(\pi_2\circ\pi_1)_*\mu_i=m_{\mathcal S_{n-1}}\text{ weakly as }T\longrightarrow\infty.
\end{equation}

\subsection{Testing the Construction on Quadratic Number Fields}
Quadratic number fields are well studied objects and their rings of integers can be fully described. Therefore, we would like to motivate our construction by examining it for quadratic number fields $\FF$. More specifically, we would like to analyze the 'asymptotic portion' of quadratic number fields with predetermined geometric behaviour phrased using $q_E$. The following theorem summerizes this analysis:
\begin{theorem}\label{n2}
Let $n=2$ and let $(r,s)$ be a signature. Let $E$ be a 1-dimensional subspace in $\RR^2$ such that $E\neq \ell_{\overline 1}, V_0$. Then:
\begin{equation}\label{63}
	\frac{1}{\abs{\mathcal F^s_2(T)}}\sum_{\FF\in \mathcal F^s_2(T)}\delta_{q_E(\FF)}\longrightarrow m_{Y_1^{\ell_{\overline 1}}}\text{ weakly as }T\longrightarrow \infty.
\end{equation}
When $E=V_0$ there exists a periodic orbit $\{p_1,p_2\}\subset Y_1^{\ell_{\overline 1}}$ and $t\in (0,1)$ such that:
 \begin{equation}\label{orrr}
	\frac{1}{\abs{\mathcal F^s_2(T)}}\sum_{\FF\in \mathcal F^s_2(T)}\delta_{q_E(\FF)}\longrightarrow t\delta_{p_1}+(1-t)\delta_{p_2}\text{ weakly as }T\longrightarrow \infty.
\end{equation}
\end{theorem}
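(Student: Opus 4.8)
\emph{Proof plan.} For $n=2$ everything is completely explicit, and the plan is to reduce the theorem to the classical equidistribution modulo $1$ of $\sqrt m$ over squarefree $m$. First I would fix the orthonormal basis $e_1=\overline 1/\norm{\overline 1}$ of $\ell_{\overline 1}$ and a unit vector $e_2$ spanning $V_0$, and write elements of $\on{SL}_2(\RR)$ as matrices in this basis. The subgroup of $\on{SL}_2(\RR)$ fixing $\ell_{\overline 1}$ pointwise is the one–parameter unipotent group $U=\{u_s\}_{s\in\RR}$, $u_s=\pmat{1 & s\\ 0 & 1}$; since $\Lambda_0:=\ZZ e_1\oplus\ZZ e_2$ lies in $Y_1^{\ell_{\overline 1}}$ with $U$–stabilizer $\{u_s:s\in\ZZ\}$, the periodic orbit $Y_1^{\ell_{\overline 1}}=U\Lambda_0$ is identified with $\RR/\ZZ$ via $u_s\Lambda_0\mapsto s$, under which $m_{Y_1^{\ell_{\overline 1}}}$ becomes Lebesgue probability measure (the unique $U$–invariant one). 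Writing $q_E(\FF)=u_{s_E(\FF)}\Lambda_0$, the theorem becomes an equidistribution statement about the real numbers $s_E(\FF)$ modulo $1$.

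The crux is an explicit formula for $s_E(\FF)$. For $\FF=\QQ(\sqrt d)$, $d$ squarefree, $\mathcal O_\FF$ has $\ZZ$–basis $\{1,\sqrt d\}$ when $\on{Disc}(\FF)\equiv 0\pmod 4$ and $\{1,\tfrac12(1+\sqrt d)\}$ when $\on{Disc}(\FF)\equiv 1\pmod 4$. Applying $\sigma$ and rewriting in the $(e_1,e_2)$–basis, $\Lambda(\FF)$ is spanned by $\overline 1$ together with a vector whose $\ell_{\overline 1}$–component is $0$, respectively $\tfrac12\overline 1$, and whose $V_0$–component has length of order $\sqrt{\abs{\on{Disc}(\FF)}}$. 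Using $\cov{\Lambda(\FF)}=2^{-s}\sqrt{\abs{\on{Disc}(\FF)}}$, the parameter $t$ with $g^t_{E,\ell_{\overline 1}}\inn{\Lambda(\FF)}\in Y_1^{\ell_{\overline 1}}$ is the one forcing the short vector in $\ell_{\overline 1}$ to have length $1$, i.e. $e^{-t}=\cov{\Lambda(\FF)}^{1/2}/\norm{\overline 1}$; writing $E=\RR(\alpha e_1+\beta e_2)$ with $\beta\neq 0$, so that $g^t_{E,\ell_{\overline 1}}=\pmat{e^{-t} & \tfrac{\alpha}{\beta}(e^{t}-e^{-t})\\ 0 & e^{t}}$, I would apply $g^t_{E,\ell_{\overline 1}}$ to $\inn{\Lambda(\FF)}$ and read off the second generator modulo $\ZZ e_1$, obtaining
\begin{equation}\label{n2formula}
 s_E(\FF)\ \equiv\ c_E\,\sqrt{\abs{\on{Disc}(\FF)}}\ +\ \epsilon_\FF\ \pmod 1,\qquad c_E:=-\frac{\alpha}{2\beta},
\end{equation}
where $\epsilon_\FF$ depends only on $\on{Disc}(\FF)\bmod 4$ (it equals $\tfrac{\alpha}{\beta}$ or $\tfrac12+\tfrac{\alpha}{\beta}$); in particular $c_E=0$ exactly when $E=V_0$, in which case $\epsilon_\FF\in\{0,\tfrac12\}$. (For $n=2$ the lattice $\Lambda(\FF)$, and hence $q_E(\FF)$, is independent of the labelling of the two archimedean places of $\FF$, so no ambiguity arises here.)

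Granting \eqref{n2formula}, the two cases follow quickly. If $E=V_0$ then $s_{V_0}(\FF)=\epsilon_\FF\in\{0,\tfrac12\}$, so $q_{V_0}(\FF)\in\{p_1,p_2\}$ with $p_1=\Lambda_0$, $p_2=u_{1/2}\Lambda_0$; the pair $\{p_1,p_2\}$ is the orbit of the closed discrete subgroup $\{u_{k/2}:k\in\ZZ\}$ of $\on{SL}_2(\RR)$, hence a periodic orbit in $Y_1^{\ell_{\overline 1}}$, and
\[
 \frac1{\abs{\mathcal F^s_2(T)}}\sum_{\FF\in\mathcal F^s_2(T)}\delta_{q_{V_0}(\FF)}=t_T\,\delta_{p_1}+(1-t_T)\,\delta_{p_2},\qquad t_T:=\frac{\#\{\FF\in\mathcal F^s_2(T):\on{Disc}(\FF)\equiv0\ (\mathrm{mod}\ 4)\}}{\abs{\mathcal F^s_2(T)}}.
\]
I would then evaluate $\lim_{T\to\infty}t_T$ by the classical asymptotics for squarefree integers of a fixed sign in residue classes mod $4$ (expanding $\mathbf{1}_{\mathrm{sqfree}}(m)=\sum_{j^2\mid m}\mu(j)$ and summing): in both signatures this gives $\#\{\on{Disc}\equiv 1\}\sim 2\,\#\{\on{Disc}\equiv 0\}$, so $t_T\to t=\tfrac13\in(0,1)$, which is \eqref{orrr}. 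If instead $E\neq V_0,\ell_{\overline 1}$, then $c_E\neq 0$; splitting $\mathcal F^s_2(T)$ into its two residue classes mod $4$ (each of positive density, with $\epsilon_\FF$ constant on each), Weyl's criterion together with \eqref{n2formula} reduces \eqref{63} to the bound
\[
 \sum_{\substack{m\le M,\ m\ \mathrm{squarefree}\\ m\equiv a\ (\mathrm{mod}\ 4)}} e^{2\pi i h c\sqrt m}=o(M)\qquad(M\to\infty)
\]
for every fixed $c\neq 0$, $h\in\ZZ\setminus\{0\}$ and $a\in\{1,2,3\}$.

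This last estimate is the only genuinely analytic ingredient, and the step I expect to require the most care. The plan is to expand $\mathbf{1}_{\mathrm{sqfree}}(m)=\sum_{j^2\mid m}\mu(j)$ and reduce to sums $\sum_{k\le M/j^2,\ k\equiv a'\,(\mathrm{mod}\ 4)}e^{2\pi i h c j\sqrt k}$; for $j\le M^{1/4}$ each of these is $\ll_{c}(M/j^2)^{3/4}$ by van der Corput's second–derivative estimate for $x\mapsto hcj\sqrt x$ (the implied constant being absolute after factoring out $hc$), while the tail $j>M^{1/4}$ contributes $\ll\sum_{j>M^{1/4}}M/j^2\ll M^{3/4}$ trivially, so the whole sum is $\ll_{c}M^{3/4+o(1)}=o(M)$. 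By Weyl's criterion $s_E(\FF)$ then equidistributes in $\RR/\ZZ$ and $q_E(\FF)$ equidistributes with respect to $m_{Y_1^{\ell_{\overline 1}}}$. The rest is the bookkeeping behind \eqref{n2formula} — tracking the two shapes of $\mathcal O_\FF$, the covolume $\cov{\Lambda(\FF)}$, and the $E$–dependence through the normalization $g^t_{E,\ell_{\overline 1}}$ — together with the standard count of fundamental discriminants in residue classes, both routine.
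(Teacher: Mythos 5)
Your proposal is correct and follows essentially the same route as the paper's sketch: identify $Y_1^{\ell_{\overline 1}}\cong\RR/\ZZ$, use the explicit $\ZZ$-basis of $\mathcal O_{\QQ(\sqrt D)}$ in each residue class of $D\bmod 4$ to compute $q_E(\FF)$ as a linear function of $\sqrt{|\on{Disc}(\FF)|}$ with slope vanishing exactly when $E=V_0$, and reduce the nondegenerate case to equidistribution of $\{c\sqrt m\}$ over squarefree $m$ in a fixed residue class. You fill in details the paper defers to a textbook citation (the M\"obius expansion plus van der Corput estimate) and pin down $t=1/3$ where the paper only asserts $t\in(0,1)$, but the underlying argument is the same.
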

Theorem \ref{n2} is a simple test case due to the complete description of rings of integers in quadratic fields. We use this description to sketch a proof of the theorem.

 For simplicity of this exposition, we limit ourselves to the case $s=0$. Firstly, it is easy to see that when $n=2$, $Y_1^{\ell_{\overline 1}}$ is isomorphic to the unit circle $S^1$ and that $m_{Y_1^{\ell_{\overline 1}}}$ is the Lebesgue measure $\lambda$ on $S^1$. Secondly, let $D$ be a square free natural number (nonnegative, because $s=0$). Given  $D=1\mod 4$, denote $\omega=\frac{1+\sqrt{D}}{2}$. It is well known that:
\begin{equation}
	\mathcal O_{\QQ(\sqrt{D})}=\{a+b\omega:a,b\in \ZZ\}.
\end{equation}
Let $\overline u=(1,u)\in \RR^2$ such that $u\neq -1$ and let $E=\overline{u}^{\perp}$. Using this description, $q_E(\QQ(\sqrt{D}))$ can be calculated to be (This calculation is carried in grated generality in the proof of Lemma \ref{structure} below, we refer the reader to this proof for reference):
\begin{equation}\label{oznaim}
	q_{E}(\QQ(\sqrt{D}))=\frac{1}{2(1+u)}\left(1+u+(1-u)\sqrt{D}\right)\operatorname{mod}1\in S^1.
\end{equation}
Denote for any $T>0$ and $i=0,\dots,3$:
\begin{equation}
	 \mathcal D_T(i)=\{0<D<T:\text{ }D\text{ is a square free natural number and }D=i\mod4\}.
\end{equation}
Assume further that $u\neq 1$ so that $\overline u\neq \overline 1$. Then classical asymptotic estimations on the density of square free integers (see ~\cite[Theorem 8.2.1]{IntroNumbTheory}) and equation (\ref{oznaim}) show that:
\begin{equation}\label{cin1}
	\frac{1}{\abs{\mathcal D_T(1)}}\sum_{D\in \mathcal D_T(1)}\delta_{q_{E}(\QQ(\sqrt{D}))}\longrightarrow \lambda\text{ as }T\longrightarrow\infty.
\end{equation}
When $D=2,3\mod 4$ the description of $\QQ(\sqrt{D})$ is different (and also well known), but also shows in the same way that:
\begin{equation}\label{cin2}
	\frac{1}{\abs{\mathcal D_T(2)\cup \mathcal D_T(3)}}\sum_{D\in \mathcal D_T(2)\cup \mathcal D_T(3)}\delta_{q_{E}(\QQ(\sqrt{D}))}\longrightarrow \lambda\text{ as }T\longrightarrow\infty.
\end{equation}
Averaging equations (\ref{cin1}) and (\ref{cin2}) implies the first statement of Theorem \ref{n2}.

When $\overline u=\overline 1$ and $D=1\on{mod}4$ is square free, equation (\ref{oznaim}) says $q_{V_0}(\QQ(\sqrt{D}))=q_{E}(\QQ(\sqrt{D}))=1/2+\ZZ$. When $D=2,3\on{mod}4$ the same calculation leading to equation (\ref{oznaim}) which as we said is given in Lemma \ref{structure} below, says that $q_{V_0}(\QQ(\sqrt{D}))=q_{E}(\QQ(\sqrt{D}))=\ZZ$. From this it is immediate to deduce that for some $t\in (0,1)$ (related to the density of $\mathcal D_T(1)$ in $\ZZ$):
\begin{equation}
	\frac{1}{\abs{\mathcal D_T(1)\cup \mathcal D_T(2)\cup \mathcal D_T(3)}}\sum_{D\in \mathcal D_T(1)\cup \mathcal D_T(2)\cup \mathcal D_T(3)}\delta_{q_{V_0}(\QQ(\sqrt{D}))}\longrightarrow t\delta_{0}+(1-t)\delta_{1/2}\text{ as }T\longrightarrow\infty
\end{equation}
proving equation (\ref{orrr}) and the second statement of Theorem \ref{n2}.

\newpage
\section{The Ingredients}
\subsection{Notation}\label{Not}
Given a natural number $n$, a ring of rank $n$ over another ring $R$ is a ring which is also a free $R$-module. Given a ring $R$, we denote $\operatorname{GL}_n(R)$ to be the space of all invertible $n\times n$ matrices with entries from $R$. The subset of $\operatorname{GL}_n(R)$ of all matrices with determinant $1$ is denoted $\operatorname{SL}_n(R)$. $(\RR^n,\inner{\cdot}{\cdot})$ denotes the standard inner product structure of $\RR^n$. Given a linear subspace $E\subset \RR^n$, we denote $E^{\perp}=\{y\in \RR^n:\text{ }\inner{y}{x}=0\text{ for all } x\in E\}$ and we denote $P_E:\RR^n\rightarrow E$ to be the orthogonal projection onto $E$. $\operatorname{Vol}_n$ denotes the standard Lebesgue measure on $\RR^n$. Given linear subspaces $E_1,E_2\subset \RR^n$, we write $\RR^n=E_1\oplus E_2$ if $E_1+E_2=\RR^n$ and $E_1\cap E_2=\{0\}$. The set of all matrices $g\in \operatorname{GL}_n(\RR)$ such that $g^tg=gg^t=I$ is denoted by $O(n)$, and $\operatorname{O}(n)\cap \operatorname{SL}_n(\RR)$ is denoted $\operatorname{SO}(n)$. For a natural number $n$ we denote $\Sigma_n$ to be a fundamental domain for the action of $\operatorname{SL}_{n}(\ZZ)$ on $\operatorname{SL}_{n}(\RR)$. When additional properties of the fundamental domain are required, we will state so explicitly. Given a locally compact topological space $X$ and Borel measures $(\mu_n)_{n\geq 0},\mu$ on $X$, we say that $\mu$ is a {weak limit} of $\mu_n$ if for any continuous and compactly supported function $f:X\rightarrow \RR$, $\int fd\mu_n\longrightarrow \int fd\mu$. Given a finite Borel measure $\mu$ on a topological space $X$, a continuous map $\pi:X\rightarrow Y$ to another topological space $Y$, $\pi_*\mu$ denotes the push-forward measure of $\mu$, which is a Borel measure on $Y$, defined by $\pi_*(\mu)(A)=\mu(\pi^{-1}A)$ for any Borel set $A\subset Y$. In case $\mu$ is any measure (not necessarily finite), a Jordan measurable subset $A\subset X$ is a Borel measurable set such that $\mu(\partial A)=0$. Given a subset $R\subset \RR^m$, we denote $N(R)$ to be the size of $R\cap \ZZ^m$. Given $n\in \NN$, we denote $[n]=\{1,\dots,n\}.$ A number field is a finite dimensional field extension $\FF\supset \QQ$. Given a number field $\FF$ of degree $n$ and number $s=1,\dots,\floor{\frac{n}{2}}$, we say that $\FF$ is {of signature $(n-2s,s)$} if the minimal polynomial $p$ of $\FF$ has precisely $s$ pairs of conjugate complex roots. Given a natural numbers $n,k,d$, a function $f:\RR^n\rightarrow \RR^k$ is said to be {homogeneous of degree $d$} if $f(\lambda v)=\lambda^df(v)$ for every $v\in \RR^n$ and $\lambda>0$. An $n$-dimensional lattice is a subset of $\RR^n$ given by $\ZZ$-span of a basis for $\RR^n$. The {covolume} of a lattice in a Euclidean space $\Lambda\subset (\RR^n,\inner{\cdot}{\cdot})$ denoted $\cov{\Lambda}$, is defined as $\operatorname{Vol_n}(\mathcal F)$ and $\mathcal F$ is any fundamental domain for the additive action of $\Lambda$ on $\RR^n$. Given a lattice $\Lambda\subset \RR^n$, $\Lambda^*$ is the dual lattice defined by $\{y\in \RR^n:\inner{x}{y}\in \ZZ\text{ for all }x\in \Lambda\}$.
 Given an $n$-dimensional lattice $\Lambda$, we denote $\inn{\Lambda}=\cov{\Lambda}^{-1/n}\Lambda$. $S_n$ denotes the symmetric group of order $n$. The space of unimodular lattices in $\RR^n$ is denoted $X_n$ and sometimes $X(\RR^n)$.

\newpage
\subsection{New Lattice Normalization}\label{NLN}
In this subsection we repeat in more detail the construction carried in the introduction.\\
For every degree $n$ number field $\FF$ of signature $(r,s)$ we denote $\sigma_1,\dots,\sigma_r$ to be a fixed ordering of the real embeddings and $\tau_1,\overline{\tau_1},\dots,\tau_s,\overline {\tau_s}$ to be a fixed ordering of the pairs of complex embeddings. We will use this ordering throughout the paper. To stress this fact, we introduce:
 \begin{nota}\label{ord}
 	For any degree $n$ number field $\FF$ of signature $(r,s)$, $\{\sigma_1,\dots,\sigma_r,\tau_1,\overline{\tau_1},\dots,\tau_s,\overline {\tau_s}\}$ denotes a fixed arbitrary ordering of the natural embeddings of $\FF$.
\end{nota}
For every $\pi=(\pi_r,\pi_s)\in S_r\times S_s$, we denote $\Lambda_{\pi}(\FF)$ to be the image of $\mathcal O_{\FF}$ under
\begin{equation}
	(\sigma_{\pi_r(1)},\dots,\sigma_{\pi_r(r)},\tau_{\pi_s(1)},\dots,\tau_{\pi_s(s)}):\FF\rightarrow \RR^r\oplus\CC^s\cong\RR^n.
\end{equation}
It is well known that $\Lambda_{\pi}(\FF)$ is a lattice in $\RR^n$. Define the vector $\overline 1\vcentcolon=(1,\dots,1,1,0,\dots,1,0)\in \RR^n$ to be the image of $(1_{\RR},\dots,1_{\RR},1_{\CC},\dots,1_{\CC})\in \RR^r\oplus \CC^s$ under the identification with $\RR^n$. Let $\inner{\cdot}{\cdot}$ be the standard inner product on $\RR^n$, let $V_0=\{x\in \RR^n:\inner{x}{\overline 1}=0\}$ and denote $\ell_{\overline 1}=\RR\cdot \overline 1$. Now we carry in detail the construction of $q_E(\FF)$ from the introduction.

Since the language of grids will be useful for us, we introduce the following definition:
\begin{definition}
	The space of $n$-dimensional grids is defined by:
	\begin{equation}
		Y_n=\{\Lambda+v:\Lambda\in X_n,v\in \RR^n\}.
	\end{equation}
Any $M\in Y_n$ uniquely determines a lattice $\Lambda_M\in X_n$ such that there exists $v\in \RR^n$, defined up to $\Lambda_M$, satisfying $M=\Lambda_M+v$. Denote the corresponding element in $\RR^n/\Lambda$ by $\text{vec}(M)$ and call it the translating vector of $M$. 
\end{definition}
Note that $X(\RR^n)$ is embedded naturally inside $Y_n$ by the identity map (taking $v=0$ in the definition of the grid). Next, we come back to the definition of the space in which we shall have equidistribution.
\begin{definition}
For any line $\ell$ in $\RR^n$ define:
\begin{equation*}
	Y^{\ell}(\RR^n)\vcentcolon=\{\Lambda\in X_n:\Lambda\cap \ell\text{ is a lattice in }\ell\}
\end{equation*}
and
\begin{equation*}
	Y_1^{\ell}(\RR^n)\vcentcolon=\{\Lambda\in X_n:\Lambda\cap \ell\text{ is a unimodular lattice in }\ell\}
\end{equation*}
\end{definition}
The relation between this definition and the language of grids is the following. For any line $\ell$ in $\RR^{n+1}$ there is a natural bijection from $Y_1^{\ell}(\RR^{n+1})$ to $Y_n$. Indeed, for any $\Lambda\in Y_1^{\ell}(\RR^{n+1})$ there exists $u(\ell,\Lambda)\notin \ell^{\perp}$ such that:
\begin{equation}\label{14:19}
	\Lambda^*=\bigcup_{k\in \ZZ} \Lambda^*\cap \ell^{\perp}+ku(\ell,\Lambda).
\end{equation}
The vector $u(\ell,\Lambda)$ is unique up to choice of direction for $\ell$. Fix such choice and define:
\begin{definition}\label{rdef}
	The function $\rho^{\ell}_{n+1}:Y_1^{\ell}(\RR^{n+1})\rightarrow Y_n$ is defined by:
	\begin{equation}
		Y_1^{\ell}(\RR^{n+1})\ni\Lambda \mapsto (\Lambda^*\cap \ell^{\perp})+P_{\ell^{\perp}}(u(\ell,\Lambda))
	\end{equation}
	where $u(\ell,\Lambda)\in \RR^{n+1}$ is as above.
\end{definition}
We will not prove the following lemma:
\begin{Lemma}
	For any $n$ and $\ell$ as above, the map $\rho_{n}^{\ell}$ is a bijection. 
\end{Lemma}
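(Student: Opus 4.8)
The plan is to construct an explicit inverse to $\rho^\ell_{n+1}$ and check that the two composites are the identity. Given a grid $M = \Lambda_M + \vek{M} \in Y_n$ inside $\ell^\perp \cong \RR^n$ (after fixing an identification of $\ell^\perp$ with $\RR^n$), I would build a lattice $\widetilde\Lambda \in Y_1^\ell(\RR^{n+1})$ as follows: first pick $w\in\RR^{n+1}$ a unit vector spanning $\ell$, and a representative $v\in\ell^\perp$ of the translating vector $\vek{M}$; then form the lattice $L$ in $\RR^{n+1}$ generated by $\Lambda_M$ (sitting inside $\ell^\perp$) together with the single vector $v + w$. One checks that $L\cap\ell = \ZZ w$ is unimodular in $\ell$ (so $L\in Y_1^\ell(\RR^{n+1})$), that $L^*\cap\ell^\perp = \Lambda_M^*$, and that a valid choice of $u(\ell,L)$ in \eqref{14:19} has $P_{\ell^\perp}(u(\ell,L)) = v \bmod \Lambda_M^*$... here one must be careful about which lattice the translating vector lives modulo. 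I would then \emph{define} $\psi^\ell_{n+1}(M) := L$ and verify $\rho^\ell_{n+1}\circ\psi^\ell_{n+1} = \id_{Y_n}$ and $\psi^\ell_{n+1}\circ\rho^\ell_{n+1} = \id_{Y_1^\ell(\RR^{n+1})}$.

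The key steps, in order: (1) Fix notation — a direction $w$ for $\ell$ with $\|w\|=1$, and identify $\ell^\perp\cong\RR^n$ isometrically. (2) For $\Lambda\in Y_1^\ell(\RR^{n+1})$, establish that $\Lambda\cap\ell = \ZZ w$ forces the dual decomposition \eqref{14:19} with $u(\ell,\Lambda)$ having $\inner{u(\ell,\Lambda)}{w}$ a fixed nonzero constant (normalizable to $1$ by the unimodularity hypothesis), so that $\rho^\ell_{n+1}$ is well-defined; also note $\Lambda^*\cap\ell^\perp$ is a unimodular lattice in $\ell^\perp$, since $\Lambda\in X_{n+1}$ and $\Lambda\cap\ell$ is unimodular (covolumes multiply). (3) Show injectivity: if $\rho^\ell_{n+1}(\Lambda_1)=\rho^\ell_{n+1}(\Lambda_2)$, then $\Lambda_1^*\cap\ell^\perp=\Lambda_2^*\cap\ell^\perp$ and $u(\ell,\Lambda_1)\equiv u(\ell,\Lambda_2) \bmod (\Lambda^*\cap\ell^\perp)$; combined with $\inner{u}{w}=1$ this pins down $\Lambda_i^*$ hence $\Lambda_i$. (4) Show surjectivity by the explicit construction $\psi^\ell_{n+1}$ above and the computation $\rho^\ell_{n+1}(\psi^\ell_{n+1}(M)) = M$.

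The main obstacle I anticipate is bookkeeping with duals and the ambiguity in the translating vector: $\vek{M}$ is only defined modulo $\Lambda_M$, while in the construction the natural object that appears after dualizing twice is a vector defined modulo $(\Lambda_M^*)^* = \Lambda_M$, so these should match — but verifying that the map $\psi^\ell_{n+1}$ is well-defined (independent of the choice of representative $v$ of $\vek{M}$) and lands in $Y_1^\ell$ requires tracking how adding an element of $\Lambda_M$ to $v$ changes the lattice $L$ by an $\operatorname{SL}_{n+1}(\ZZ)$-equivalence fixing $\ell$ pointwise, and how it changes $u(\ell,L)$ by an element of $\Lambda_M^*$. A secondary subtlety is the sign/direction ambiguity in the choice of $w$ (equivalently of $u(\ell,\Lambda)$): the statement already says $u$ is unique up to direction and a choice is fixed, so I would simply fix $w$ compatibly once and for all and note that both $\rho$ and $\psi$ depend on this choice in the same way, so bijectivity is unaffected. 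Everything else is linear algebra over $\ZZ$ (covolume multiplicativity for the splitting $\RR^{n+1} = \ell\oplus\ell^\perp$, and the standard fact that for a lattice $\Lambda$ with $\Lambda\cap\ell$ a lattice in $\ell$, one has $(\Lambda^*\cap\ell^\perp)$ is the orthogonal projection $P_{\ell^\perp}(\Lambda)^*$ inside $\ell^\perp$), which I would state and use without belaboring the routine verifications.
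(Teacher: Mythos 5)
The paper itself explicitly declines to prove this lemma (``We will not prove the following lemma''), so there is no in-paper argument to compare against; your strategy of exhibiting an explicit inverse is reasonable and close to what one would do. However, the candidate inverse you wrote down is off by a dual, and this is a genuine error, not just loose bookkeeping. With $L := \Lambda_M + \ZZ(v+w)$, one has $L\cap\ell = \{mw : mv\in\Lambda_M\}$: this equals $\ZZ w$ only when $v\in\Lambda_M$, i.e.\ when the translating vector is trivial; for general $v$ it is a proper subgroup of $\ZZ w$, or even $\{0\}$ if $v$ is irrational with respect to $\Lambda_M$. So $L$ typically does not lie in $Y_1^{\ell}(\RR^{n+1})$ at all, and $\rho^{\ell}_{n+1}(L)$ is undefined. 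The claimed identity $L^*\cap\ell^{\perp}=\Lambda_M^*$ is likewise false in general: a direct check gives $L^*\cap\ell^{\perp}=\{x\in\Lambda_M^* : \inner{x}{v}\in\ZZ\}$, again a proper sublattice unless $v\in\Lambda_M$. Even if those identities held, you would get a grid with lattice part $\Lambda_M^*$ and shift $v\bmod\Lambda_M^*$, which is not $M$.

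The fix is to take the dual: define $\psi^{\ell}_{n+1}(M) := L^{*}$ where $L = \Lambda_M + \ZZ(v+w)$. Then $\cov(L)=\cov(\Lambda_M)\cdot\abs{P_{\ell}(v+w)}=1$, so $L^*$ is unimodular; $L^{*}\cap\ell = \ZZ w$ because for $x=cw$ one has $\inner{x}{\Lambda_M}=0$ and $\inner{x}{v+w}=c$; $(L^*)^{*}\cap\ell^{\perp} = L\cap\ell^{\perp} = \Lambda_M$ (the $\ell$-component of $a+m(v+w)$ is $mw$, forcing $m=0$); and one may take $u(\ell,L^*)=v+w$, so $P_{\ell^{\perp}}(u)=v$ and $\rho^{\ell}_{n+1}(L^{*})=\Lambda_M + v = M$. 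For the other composite, with the sign convention fixed so that $P_{\ell}(u(\ell,\Lambda))=w$, one has $\psi^{\ell}_{n+1}(\rho^{\ell}_{n+1}(\Lambda)) = ((\Lambda^*\cap\ell^{\perp})+\ZZ u(\ell,\Lambda))^{*} = (\Lambda^*)^{*} = \Lambda$. Your injectivity argument (step 3) and the covolume-multiplicativity observation in step 2 are fine; it is only the explicit formula for $\psi$ that needs to be dualized, and once that is done the well-definedness issues you flagged (independence of the representative $v\bmod\Lambda_M$, and the sign of $w$) resolve exactly as you anticipated.
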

The submanifold $Y_1^{\ell}(\RR^{n+1})$ is a periodic orbit of the group:
\begin{equation*}
	G(\ell)\vcentcolon=\{g\in \operatorname{SL}_{n+1}(\RR):g\mid_{\ell}=\on{Id}\mid_{\ell}\}\cong\operatorname{SL}_n(\RR)\ltimes\RR^n\leq \on{SL}_{n+1}(\RR).
\end{equation*}
This group also acts on $Y_n$ in a natural transitive way making the map $\rho_{n+1}^{\ell}$ equivariant. Denote $m_{Y_1^{\ell}(\RR^{n+1})}$ to be the Haar measure on $Y_1^{\ell}(\RR^{n+1})$ coming from periodicity of the $G(\ell)$-orbit $Y_1^{\ell}(\RR^{n+1})$. The measure $m_{Y_1^{\ell}(\RR^{n+1})}$ and the pushforward measure $m_{Y_n}\vcentcolon=(\rho^{\ell}_{n+1})_*m_{Y_1^{\ell}(\RR^{n+1})}$ are thus invariant under the action of $G(\ell)$. 

Now that we finished describing the probability space where equidistribution will occur, we discuss the way to deform our lattices coming from number fields into it. As in the introduction, the tool for this purpose is the following family of one parameter groups:
\begin{definition}\label{norm}
	Given subspaces $E_1,E_2\subset \RR^n$ such that $E_1\oplus E_2=\RR^n$ and $t\in \RR$, we define a linear operator $g_{E_1,E_2}^t\in \operatorname{SL}_n(\RR)$ by $g(x)=e^{t/dim(E_1)}x$, $g(y)=e^{-t/dim(E_2)}y$ for every $x\in E_1,y\in E_2$. The set $\{g_{E_1,E_2}^t\}_{t\in \RR}$ is clearly a group, which we shall call the {$E_1,E_2$ normalization group}.
\end{definition}
Next we explain how to carry the deformation from $Y^{\ell_{\overline 1}}$ into $Y^{\ell_{\overline 1}}_1$ using the normalization groups. In particular, we do that for the lattices coming from number fields.
\begin{definition}\label{notations}
	Let $\Lambda\in Y^{\ell_{\overline 1}}$. Let $E\subset \RR^n$ be such that $\ell_{\overline 1}\oplus E=\RR^n$. Let $t\in \RR$ be such that $e^{-t}\operatorname{cov}(\Lambda)^{-1/n}\sqrt{n}=1$. We define:
	\begin{equation}
		\Lambda(E)=g_{E,\ell_{ \overline 1}}^t(\Lambda). 
	\end{equation}
For every degree $n$ number field $\FF$ of signature $(r,s)$ and $\pi=(\pi_r,\pi_s)\in S_r\times S_s$ we define:
\begin{equation}\label{20:30}
	\Lambda^{\pi}_{\FF}(E)=(\Lambda_{\pi}(\FF))(E).
\end{equation}
Note that by the choice of $t$ it holds that $\Lambda(E)\in Y_1^{\ell_{\overline 1}}$. Thus $\Lambda(E)\in Y_1^{\ell_{\overline 1}}(\RR^n)$ and in particular so is $\Lambda^{\pi}_{\FF}(E)$. Denote:
\begin{equation}
	\Gamma_E(\Lambda)=\rho_n^{\ell_{\overline 1}}(\Lambda(E))
\end{equation}
and
\begin{equation}
	\Gamma_E^{\pi}(\FF)=\Gamma_E(\Lambda^{\pi}_{\FF}(E)).
\end{equation}
\end{definition}
We restate Theorem \ref{unif3} from the introduction in the language of grids, because this will be more natural for our proof later.
\begin{theorem}\label{unif3g}
Let $n = 3,4,5$ and let $(r,s)$ be a signature. Let $E$ be a subspace of $\RR^n$ of such that $E\oplus \ell_{\overline 1}=\RR^n$ and $E\neq V_0$. Then:
\begin{equation}
	\frac{1}{\abs{\mathcal F^s_n(T)}}\sum_{\pi\in S_r\times S_s}\sum_{\FF\in \mathcal F^s_n(T)}\delta_{\Gamma_E^{\pi}(\FF)}\longrightarrow m_{Y_{n-1}}\text{ weakly as }T\longrightarrow \infty.
\end{equation}
\end{theorem}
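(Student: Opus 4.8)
The plan is to reduce Theorem \ref{unif3g} to an equidistribution statement about the image lattices $P_{V_0}(\Lambda_\pi(\FF))$ together with their translating data, and to feed this into the machinery of \cite{BhaPip}. First I would unwind the definition of $\Gamma_E^\pi(\FF)$. Since $\Gamma_E^\pi(\FF) = \rho_n^{\ell_{\overline 1}}(\Lambda_\FF^\pi(E))$, and $\rho_n^{\ell_{\overline 1}}$ is a $G(\ell_{\overline 1})$-equivariant bijection pushing $m_{Y_1^{\ell_{\overline 1}}(\RR^n)}$ to $m_{Y_{n-1}}$, it suffices to prove that
\begin{equation}\label{prop-reduction}
	\frac{1}{\abs{\mathcal F^s_n(T)}}\sum_{\pi\in S_r\times S_s}\sum_{\FF\in \mathcal F^s_n(T)}\delta_{\Lambda_\FF^\pi(E)}\longrightarrow m_{Y_1^{\ell_{\overline 1}}(\RR^n)}\text{ weakly as }T\longrightarrow\infty.
\end{equation}
Here $\Lambda_\FF^\pi(E) = g^t_{E,\ell_{\overline 1}}\inn{\Lambda_\pi(\FF)}$ for the unique $t$ making the intersection with $\ell_{\overline 1}$ unimodular. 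The point of working in $Y_1^{\ell_{\overline 1}}(\RR^n)$ rather than in $Y_{n-1}$ is that the periodic orbit $Y_1^{\ell_{\overline 1}}(\RR^n)$ carries the obvious $G(\ell_{\overline 1}) \cong \on{SL}_{n-1}(\RR)\ltimes\RR^{n-1}$ action, and a point of it is determined by (a) a unimodular lattice in $V_0$ — the ``shape'' data — and (b) a coset in $\RR^{n-1}/(\text{that lattice})$ recording how the rest of $\Lambda_\FF^\pi(E)$ sits relative to $V_0$ and $\ell_{\overline 1}$. The hypothesis $E\neq V_0$ is exactly what makes this second coordinate nontrivial and, as I expect, equidistributed.

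Next I would make coordinate (b) explicit. Writing $\Lambda_\pi(\FF)$ in terms of a $\ZZ$-basis $1,\alpha_2,\dots,\alpha_n$ of $\Oo_\FF$, the vector $\overline 1$ spans the rational line $\ell_{\overline 1}$, and the quotient lattice $\Lambda_\pi(\FF)/(\Lambda_\pi(\FF)\cap\ell_{\overline 1})$, once identified with its orthogonal projection $P_{V_0}\Lambda_\pi(\FF)$, is precisely (a rescaling of) the Bhargava--Harron shape lattice. The extra datum is the position, inside $\RR^n/(\RR\overline 1 + P_{V_0}^{-1}(\text{lattice}))$, of a chosen lift; concretely, after applying $g^t_{E,\ell_{\overline 1}}$ and intersecting with a transversal, this datum is the image of the ``$E$-component'' of $\overline{\alpha_2},\dots,\overline{\alpha_n}$ modulo the shape lattice — i.e. it is controlled by the coefficients expressing the $\ell_{\overline 1}$-parts of the $\alpha_i$ in terms of the $V_0$-parts. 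I would compute, as in the quadratic case worked out after Theorem \ref{n2} (see the forthcoming Lemma \ref{structure}, whose proof contains exactly this kind of calculation), that $\Gamma^\pi_E(\FF)$ as an element of $Y_{n-1}$ is a fixed algebraic function of the Minkowski embedding data of $(\Oo_\FF, 1)$; the translating vector depends linearly on $E$ via the decomposition $\RR^n = E\oplus\ell_{\overline 1}$, and this is where the dichotomy $E = V_0$ versus $E\neq V_0$ enters. The key structural claim to establish is that the pair (shape lattice, translating coset) ranges over $Y_1^{\ell_{\overline 1}}(\RR^n) \cong \on{SL}_{n-1}(\ZZ)\backslash(\on{SL}_{n-1}(\RR)\ltimes\RR^{n-1})$ through a map that is, up to the $\on{SL}_{n-1}(\ZZ)$-action, a submersion onto a full-measure set, so that equidistribution of the shape together with asymptotic independence of the translating coordinate gives \eqref{prop-reduction}.

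Then comes the analytic heart, which I would handle by the same orbit-counting / geometry-of-numbers argument that underlies \cite{BhaPip}. For $n=3,4,5$ the relevant parametrization of degree-$n$ rings (binary cubic forms for $n=3$; pairs of ternary quadratic forms for $n=4$; quadruples of quinary alternating forms for $n=5$) identifies $S_n$-number fields of bounded discriminant with $\on{GL}_{n-1}(\ZZ)$- (resp. the appropriate arithmetic group) orbits of integer points in a prehomogeneous vector space, and Bhargava's averaging over a fundamental domain, combined with a cutoff of the cusp (the ``reducible'' / small-discriminant locus is negligible, which is where the $S_n$-condition and the degree restriction $n\le 5$ are used), shows that the corresponding lattices become equidistributed with respect to the Haar measure on $\on{SL}_{n-1}(\ZZ)\backslash\on{SL}_{n-1}(\RR)$. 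The new ingredient here is that I must track not just the $\on{SL}_{n-1}(\RR)$-shape but also the $\RR^{n-1}$-translating coordinate; but this coordinate is a continuous function on the same fundamental domain, so equidistribution in the larger space $\on{SL}_{n-1}(\ZZ)\backslash(\on{SL}_{n-1}(\RR)\ltimes\RR^{n-1})$ follows from the same counting estimate applied to test functions on $Y_{n-1}$, provided one checks that the translating coordinate, as a function of the integer point, is ``generic'' — i.e. not supported on a positive-measure sub-bundle — which again fails precisely when $E = V_0$ and holds when $E\neq V_0$. Finally, summing over the finitely many signature components and over $\pi\in S_r\times S_s$ (which only permutes embeddings and hence acts by measure-preserving symmetries) and normalizing yields the stated weak convergence to $m_{Y_{n-1}}$.

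\textbf{Main obstacle.} The serious point is not the counting — that is \cite{BhaPip} essentially verbatim — but the algebraic bookkeeping of the translating coordinate: showing that, as $\FF$ ranges over $\mathcal F^s_n(T)$, the coset $\vek(\Gamma^\pi_E(\FF))\in\RR^{n-1}/(\text{shape lattice})$ does not concentrate on any proper sub-object and in fact equidistributes jointly with the shape, for every admissible $E\neq V_0$. I expect this to require an explicit description of $\Gamma^\pi_E$ in terms of a chosen integral basis of $\Oo_\FF$ (the content of Lemma \ref{structure}), followed by the observation that changing the basis by $\on{GL}_{n-1}(\ZZ)$ acts transitively enough on the fibers that the joint distribution is forced to be the product of Haar on the base with Lebesgue on the fiber.
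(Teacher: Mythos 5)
Your reduction to the space $Y_1^{\ell_{\overline 1}}(\RR^n)$, your identification of Lemma \ref{structure} as the source of the explicit algebraic description of $\Gamma_E^\pi(\FF)$, and your observation that the base (shape) equidistribution is handled by the counting estimates of \cite{BhaPip}, all match what the paper does. But there is a genuine gap precisely at the step you flag as the ``main obstacle,'' and the mechanism you propose to close it does not work.

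You write that ``changing the basis by $\on{GL}_{n-1}(\ZZ)$ acts transitively enough on the fibers that the joint distribution is forced to be the product of Haar on the base with Lebesgue on the fiber.'' This is not a valid argument. In the correspondence of Lemma \ref{structure}, exactly one representative of each $G_{\ZZ}$-orbit lies in the chosen fundamental domain $[0,X](\Sigma_{n-1}\times\Sigma_{r-1})v_0$, and the translating coordinate is the value of a \emph{fixed} real-analytic function $f_n^E$ at that representative. The arithmetic group does not ``mix'' the fiber coordinate across the fundamental domain, and no symmetry forces the joint law to split as a product. Equidistribution of $f_n^E\operatorname{mod}\Lambda_w$ over irreducible maximal integer points is a genuine quantitative equidistribution theorem that has to be proved, and this is the analytic content of the paper. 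Concretely, the paper proves Lemma \ref{calc}, which verifies the hypotheses of a Weyl-type criterion (Theorem \ref{myWeyl}, from \cite{myNote}) for $f_n^E$. The proof is a case-by-case calculation for $n=3,4,5$: assuming some directional derivative of $f_n^E$ has vanishing gradient on a set of positive surface measure, one uses real-analyticity (Lemmas \ref{reA}, \ref{posM}) to upgrade this to an identity, which forces $f_n^E$ to coincide with an explicit polynomial in the coefficients of elements of $V_{\RR}$; comparing coefficients against the multiplication tables \ref{MT3}, \ref{MT4}, \ref{MT5} then yields a contradiction under the standing hypothesis $u\notin(\RR\cdot\overline 1)\cup V_0$. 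This is where the dichotomy $E\neq V_0$ vs. $E=V_0$ genuinely enters: for $E=V_0$ the function $f_n^{V_0}$ \emph{is} an integer polynomial by \eqref{0:49}, which is exactly why equidistribution fails there and the limit in Theorem \ref{torsion2} is atomic along the level sets of $f_w$. Your phrase ``not supported on a positive-measure sub-bundle'' gestures at this, but without the Weyl-type input it is an unsupported assertion, not a proof.

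Two further points that the paper handles and that your sketch elides. First, the fundamental domain $\Sigma_{r-1}$ is non-compact, so one cannot directly apply the Weyl machinery; the paper truncates to a bounded $\Sigma^{\eps}_{r-1}$ and then performs a nontrivial bookkeeping argument (the four-factor decomposition in \eqref{ha} together with the estimates \eqref{ONE}, \eqref{TWO}, \eqref{18:14}, \eqref{THREE}) to remove the truncation. Second, the Weyl-type theorem must be applied not to all of $\ZZ^d$ but to the set $\Gamma_d$ of irreducible maximal points, which is the intersection of a density-one set with an \emph{infinite} congruence set; this requires the additional analysis of Corollaries \ref{CRT} and \ref{kefel} (with input from \cite[Lemma 11]{BhaPip}) and is not covered by ``the same counting estimate applied to test functions.''
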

Next we deal with the case $E=V_0$. To state our next result, we need the following definition:
\begin{definition}\label{torDef}
Let $\Lambda\in X_n,m\in \NN$. Define:
\begin{equation}
	Y_n(m)=\{\Lambda+v\in Y_n:mv\in \Lambda,m'v\notin \Lambda\text{ for all }m'<m\}.
\end{equation}
Note that $Y_n(m)$ is identified under $\rho_n^{\ell_{\overline 1}}$ with a periodic orbit of $G(\ell_{\overline 1})$ in $X_{n}$ and thus exists unique Borel probability measure $m_{Y_n(m)}$ supported on $Y_n(m)$ and invariant under this action.
\end{definition}
Now we can deal with the case $E=V_0$.
\begin{theorem}\label{torsions2g}
	Let $n=3,4,5$ and let $(n-2s,s)$ be a signature. Denote $\on{Div}(n)=\{1\leq k\leq n:k\mid n\}$ to be the set of divisors of $n$. Then for any $k\in \on{Div}(n)$ there exists $\ell=\ell(k)\in \NN$, disjoint closed connected subsets $F^{(k)}_1,\dots,F^{(k)}_{\ell(k)}\subset Y_n(k)$ and nonnegative scalars $s_{k,\ell};\ell=1,\dots,\ell(k)$ not all zero, such that:
\begin{equation}
	\frac{1}{\abs{\mathcal F^s_n(T)}}\sum_{\pi\in S_r\times S_s}\sum_{\FF\in \mathcal F^s_n(T)}\delta_{q_{V_0}(\FF)}\longrightarrow \sum_{k\in \on{Div}(k)}\sum_{l=1}^{\ell(k)}s_{k,\ell}m_{Y_n(k)}\mid_{F_{\ell}^{(k)}}\text{ weakly as }T\longrightarrow \infty.
\end{equation}
\end{theorem}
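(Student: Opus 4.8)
The plan is to follow the same strategy that proves Theorem \ref{unif3g} (the $E\neq V_0$ case) but to track carefully what goes wrong — and why it produces a non-homogeneous limit — when $E=V_0$. The starting point is the parametrization of $S_n$-number fields of degree $n=3,4,5$ by integral orbits of the relevant prehomogeneous vector spaces (binary cubic forms for $n=3$, pairs of ternary quadratic forms for $n=4$, quintuples of $5\times 5$ skew-symmetric matrices for $n=5$), together with Bhargava's averaging/counting machinery and the asymptotic equidistribution of these orbits in the appropriate fundamental domain, exactly as used in \cite{BhaPip}. From an integral point $v$ in the representation one reads off both the ring of integers $\mathcal O_{\FF}$ and a natural $\ZZ$-basis; applying $\sigma$ and then the normalization $g^t_{V_0,\ell_{\overline 1}}$ produces the lattice $\Lambda^{\pi}_{\FF}(V_0)\in Y_1^{\ell_{\overline 1}}$, and $\rho_n^{\ell_{\overline 1}}$ of it is the grid $q_{V_0}(\FF)$. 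The first step is to make this chain completely explicit: I would prove a structural lemma (the analogue of Lemma \ref{structure} referenced in the quadratic warm-up) expressing $q_{V_0}(\FF)$, in particular its translating vector $\text{vec}$, as an explicit algebraic function of the orbit representative $v$.

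The key phenomenon, already visible in the $n=2$ computation in the introduction, is that when $E=V_0$ the normalization direction is orthogonal to $\overline 1$, so the deformation $g^t_{V_0,\ell_{\overline 1}}$ does not mix $\ell_{\overline 1}$ with a generic complement; consequently the translating vector $\text{vec}(q_{V_0}(\FF))$ is forced to be a torsion point of $\RR^{n-1}/\Lambda_{q_{V_0}(\FF)}$, of order exactly some $k$. That order $k$ is a divisor of $n$ because it is governed by the index of the sublattice of $\mathcal O_{\FF}$ generated by $1$ together with the projection lattice $P_{V_0}(\Lambda(\FF))$ inside all of $\Lambda(\FF)$ — equivalently by the structure of $\mathcal O_{\FF}/(\ZZ\cdot 1 + (\text{trace-zero part}))$, whose exponent divides $n=[\FF:\QQ]$ since $n\cdot\mathcal O_{\FF}\subset \ZZ\cdot 1 + \ker(\tr)$. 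So I would (i) show $q_{V_0}(\FF)\in \bigcup_{k\mid n} Y_n(k)$ for every $\FF$, and (ii) partition $\mathcal F_n^s(T)$ according to this order $k$. Within each class the image lies in the periodic orbit $Y_n(k)$, and Bhargava–Harron-type equidistribution of the underlying forms pushes forward to a measure on $Y_n(k)$; but because $Y_n(k)$ for $k>1$ need not be connected (and the forms need not equidistribute over all components — the sign of the discriminant, the signature $(n-2s,s)$, and congruence conditions cut out a union of components), the limit is a convex combination $\sum_{\ell} s_{k,\ell}\, m_{Y_n(k)}\!\mid_{F^{(k)}_\ell}$ over the closed connected pieces $F^{(k)}_\ell$ that actually get hit, with weights $s_{k,\ell}$ equal to the asymptotic proportion of fields landing in that piece (these are nonnegative, sum to $1$ after summing over all $k$ and $\ell$, and are not all zero). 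The proof that the distribution within each component is the full invariant measure $m_{Y_n(k)}\!\mid_{F^{(k)}_\ell}$ (suitably normalized) is where one invokes the ergodic/equidistribution input from \cite{BhaPip}: orthogonal to the short vector the shape already equidistributes, and the extra coordinate recorded by $\text{vec}$ is pinned down rigidly by the torsion constraint, so no new randomness and no new obstruction appears.

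The main obstacle I expect is step (ii) together with the identification of the components $F^{(k)}_\ell$ and the weights $s_{k,\ell}$: one must determine, for each $k\mid n$, exactly which connected components of $Y_n(k)$ are in the closure of $\{q_{V_0}(\FF):\FF \text{ of signature }(n-2s,s)\}$, and compute the asymptotic density of fields mapping to each. This is a bookkeeping problem over the finitely many cases $n=3,4,5$ and the finitely many divisors and signatures, but it requires knowing the precise local structure of $\mathcal O_{\FF}$ at the prime(s) dividing $n$ for a density-one (or at least positive-density, explicitly computable density) family of $\FF$, which in turn rests on the mass-formula / local-conditions analysis underlying the counting results. I would isolate this as a separate combinatorial lemma, handled case by case; everything else is a routine transplant of the argument for Theorem \ref{unif3g} through the explicit formula for $q_{V_0}$, using that $m_{Y_n(k)}$ is $G(\ell_{\overline 1})$-invariant and that $\rho_n^{\ell_{\overline 1}}$ is equivariant, so equidistribution upstairs in $Y_1^{\ell_{\overline 1}}(\RR^n)$ transfers to equidistribution of the grids.
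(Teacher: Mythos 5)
Your overall strategy matches the paper's: invoke the structural lemma to express $q_{V_0}(\FF)$ through the orbit representative $v\in V_{\ZZ}$, observe that the translating vector is forced to be torsion of order dividing $n$, partition by that order, and push through the Bhargava--Harron equidistribution within each piece exactly as in the $E\neq V_0$ case. Your explanation for why the torsion order divides $n$ --- that $n\cdot\mathcal O_{\FF}\subset\ZZ\cdot 1+\ker(\tr)$ --- is a correct and clean way to see it; the paper instead uses (citing the explicit computations in the thesis of Harron) that each $\inner{\overline 1}{\overline\alpha_i(v)}$ is an integer polynomial in the coefficients of $v$, so that the function
\begin{equation*}
f_w(p)=\frac{1}{n}\sum_{i=1}^{n-1}\inner{\overline 1}{\overline\alpha_i(g_pv_0)}w_i
\end{equation*}
takes values only in the finite set $\bigl\{\sum_i\tfrac{j_i}{n}w_i\bmod\Lambda_w\bigr\}$. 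These are two phrasings of the same fact.

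One structural point in your sketch is off and would lead you astray if fleshed out. You describe $Y_n(k)$ as potentially disconnected and the $F^{(k)}_\ell$ as the connected components that ``actually get hit,'' cut out by discriminant sign, signature, or congruence conditions. In fact $Y_n(k)$ is connected: it is a single orbit of $G(\ell_{\overline 1})\cong\operatorname{SL}_{n-1}(\RR)\ltimes\RR^{n-1}$, a connected group. The decomposition into pieces has nothing to do with signature (the theorem is already stated per signature) or discriminant sign; it arises because Lemma \ref{structure} works with a fundamental domain $\Sigma_{n-1}$ for $\operatorname{SL}_{n-1}(\ZZ)$ rather than for the smaller congruence subgroup stabilizing a torsion vector $\zeta$. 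Concretely, for each $\zeta$ of exact order $k$ the paper sets $F_\zeta=\overline{\Sigma_{n-1}\times_{\Sigma_{n-1}}\{\zeta\}}$, a proper closed subset of $Y_n(k)$, shows $f_w(p)=\zeta$ is equivalent to a congruence condition $p\in C(\zeta)$ modulo $k$, and obtains the weight as the density $\alpha_\zeta$ of $C(\zeta)$ inside $V_{\ZZ}$ (inserted into the very same four-factor estimate that proved Theorem \ref{unif3g}). Thus the ``bookkeeping problem'' you isolate as the main obstacle --- precise local analysis of $\mathcal O_{\FF}$ at primes dividing $n$ to determine which pieces are hit --- is not needed: the theorem only asserts that the weights are nonnegative and not all zero, and the density computation delivers that directly. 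Determining which $\alpha_\zeta$ vanish would indeed require the local analysis you anticipate, but the paper does not attempt it.
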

\newpage
\subsection{Basic Subsets}
\begin{definition}\label{BS}
	Given a lattice $\Lambda=\operatorname{sp}_{\ZZ}\{w_1,\dots,w_n\}\subset \RR^n$, a fundamental domain $\Sigma$ for the action of $\operatorname{stab}_{\operatorname{SL}_n(\RR)}(\Lambda)$ on $\operatorname{SL}_n(\RR)$, an open and bounded subset $S\subset \Sigma$ and $U=I_1e_1+\dots+I_ne_n\subset [0,1]^n$ where $I_i\subset [0,1]$ is an interval (such subset will be denoted by {box}), we define:
	\begin{equation*}
		S\times_{\Sigma}U=\{g^{-1}(\Lambda+I_1w_1+\dots+I_nw_n):g\in S\}\subset Y_n.
	\end{equation*}
	We refer to subsets of the above form as $\Lambda$-$\Sigma$-basic subsets.
\end{definition}
\begin{definition}\label{UW}
	Given $U$ as in Definition \ref{BS} and an ordered basis $w=\{w_1,\dots,w_n\}$ of $\RR^n$, denote:
	\begin{equation*}
		U_w=I_1w_1+\dots+I_nw_n.
	\end{equation*}
\end{definition}
The following technical Lemma will not be proved.
\begin{Lemma}\label{Bas}
Using the notation of Definition \ref{BS}, the collection $\mathcal T=\{S\times_{\Sigma}U:S\subset \Sigma,\text{such that }\sigma(\partial S)=0\text{, }U\subset [0,1]^n\text{ a box}\}$ consists of subsets of $m_{Y_{n}}$-measure-$0$ boundary which constitute a basis for the natural topology on $Y_n$. 
\end{Lemma}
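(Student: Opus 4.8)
Below is a plan for proving Lemma~\ref{Bas}.

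The plan is to pull everything back to the group $G\cong\operatorname{SL}_n(\RR)\ltimes\RR^n$ acting transitively on $Y_n$ (the group $G(\ell)$ above, for whose action $\rho^{\ell}_{n+1}$ is equivariant), to exhibit each basic subset as the image, under the canonical covering $G\to Y_n$, of a genuine product ``ball $\times$ box'', and then to read off both claims from two soft facts: the boundary of a box is $\operatorname{Vol}_n$-null while $\sigma(\partial S)=0$, and the covering is a local homeomorphism pushing Haar forward to a multiple of $m_{Y_n}$. Concretely: since $\Lambda$ is unimodular, $\operatorname{stab}_{\operatorname{SL}_n(\RR)}(\Lambda)$ is conjugate to $\operatorname{SL}_n(\ZZ)$, hence a lattice, so $\Gamma:=\operatorname{stab}_{\operatorname{SL}_n(\RR)}(\Lambda)\ltimes\Lambda$ is a lattice in $G$, and $\Psi_0\colon G\to Y_n$, $(a,w)\mapsto a\Lambda+w$, is precisely the quotient of $G$ by the right‑translation action of $\Gamma$; thus $\Psi_0$ is a covering map that intertwines left translation on $G$ with the $G$-action on $Y_n$, so $(\Psi_0)_*(\sigma\otimes\operatorname{Vol}_n)$ — with $\sigma$ a Haar measure on $\operatorname{SL}_n(\RR)$, and $G$ unimodular because $\operatorname{SL}_n$ preserves $\operatorname{Vol}_n$ — is a finite $G$-invariant measure on the transitive $G$-space $Y_n$, hence $c\cdot m_{Y_n}$ for some $c>0$. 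Post‑composing the $(\sigma\otimes\operatorname{Vol}_n)$-preserving diffeomorphism $R(a,v)=(a,av)$ and writing $\Psi:=\Psi_0\circ R$, so that $\Psi(a,v)=a(\Lambda+v)$, one still has a covering map with $\Psi_*(\sigma\otimes\operatorname{Vol}_n)=c\cdot m_{Y_n}$, and unwinding Definition~\ref{BS} gives the identity
\[
S\times_{\Sigma}U=\Psi\bigl(S^{-1}\times U_w\bigr),\qquad S^{-1}=\{g^{-1}:g\in S\},\quad U_w=I_1w_1+\dots+I_nw_n.
\]

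For the null‑boundary claim, set $B:=S\times_{\Sigma}U=\Psi(S^{-1}\times U_w)$. Since $S$ and $U_w$ are bounded, $\overline{S^{-1}}\times\overline{U_w}$ is compact, so its $\Psi$-image is compact, hence closed, whence $\overline B=\Psi(\overline{S^{-1}}\times\overline{U_w})$. If $M\in\partial B$, write $M=\Psi(a,v)$ with $(a,v)\in\overline{S^{-1}}\times\overline{U_w}$; as $\Psi$ is an open map, $(a,v)$ cannot lie in the open set $S^{-1}\times\operatorname{int}(U_w)$, for then $\Psi(S^{-1}\times\operatorname{int}(U_w))$ would be an open subset of $B$ containing $M$. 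Hence
\[
(a,v)\in\bigl(\overline{S^{-1}}\times\overline{U_w}\bigr)\setminus\bigl(S^{-1}\times\operatorname{int}(U_w)\bigr)\subseteq\bigl(\partial S^{-1}\times\overline{U_w}\bigr)\cup\bigl(\overline{S^{-1}}\times\partial U_w\bigr)=:A,
\]
so $\partial B\subseteq\Psi(A)$. Now $\sigma(\partial S^{-1})=\sigma(\partial S)=0$ (inversion preserves Haar on the unimodular group $\operatorname{SL}_n(\RR)$), and $\operatorname{Vol}_n(\partial U_w)=|\det h|\,\operatorname{Vol}_n(\partial U)=0$ where $h\in\operatorname{GL}_n(\RR)$ has columns $w_1,\dots,w_n$ and $\partial U$ is $\operatorname{Vol}_n$-null as the boundary of a box; since $\overline{S^{-1}}$ and $\overline{U_w}$ have finite measure, $(\sigma\otimes\operatorname{Vol}_n)(A)=0$. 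Finally, $\Psi$ being a covering whose deck transformations preserve $\sigma\otimes\operatorname{Vol}_n$, the set $\Psi^{-1}(\Psi(A))$ is a countable union of null sets, so $m_{Y_n}(\partial B)\le m_{Y_n}(\Psi(A))=c^{-1}(\sigma\otimes\operatorname{Vol}_n)\bigl(\Psi^{-1}(\Psi(A))\bigr)=0$.

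For the basis property, first note that if $S$ is open (as in Definition~\ref{BS}) and $U$ is an open box, then $S^{-1}\times U_w$ is open and $\Psi$ is an open map, so $S\times_{\Sigma}U$ is open. Given $M\in Y_n$ and a neighbourhood $W$: if $\operatorname{vec}(M)\neq 0$, an elementary change‑of‑basis argument — permute so that one coordinate of a representative lies in $(0,1)$, clear the others to be non‑integral by integral row operations on the basis of $\Lambda_M$, then reduce modulo $\Lambda_M$ — produces a $\ZZ$-basis $w$ of $\Lambda_M$ for which some representative $v_0$ of $\operatorname{vec}(M)$ has all coordinates in $(0,1)^n$; then, with $\Lambda:=\Lambda_M$ carrying this basis, a fundamental domain $\Sigma$ with $e\in\operatorname{int}(\Sigma)$, a small metric ball $S\ni e$ (whose spherical boundary is $\sigma$-null), and a small open box $U$ around the coordinates of $v_0$ inside $(0,1)^n$, continuity of $\Psi$ at $(e,v_0)$ yields $M=\Psi(e,v_0)\in S\times_{\Sigma}U\subseteq W$ with $S\times_{\Sigma}U\in\mathcal T$ open. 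The remaining points — those in the embedded copy of $X_n\subset Y_n$, where $\operatorname{vec}(M)=0$ — possess arbitrarily small neighbourhoods that are finite unions of basic subsets $S\times_{\Sigma}U$ with $U$ among the $2^n$ small boxes abutting the corners of $[0,1]^n$ (their union being the honest $\varepsilon$-box about $0$ in $\RR^n/\Lambda_M$), so every open subset of $Y_n$ is a union of members of $\mathcal T$, all of which have $m_{Y_n}$-null boundary. I expect the main subtlety to be the inclusion $\partial B\subseteq\Psi(A)$ above — false for a general continuous map and rescued precisely by $\Psi$ being a covering (local homeomorphism), even though it is far from globally injective — together with the minor imprecision that, at the points of the embedded $X_n\subset Y_n$, one obtains a neighbourhood basis only after taking finite unions of basic subsets whose boxes abut the corners of $[0,1]^n$.
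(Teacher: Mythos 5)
The paper explicitly declines to prove Lemma~\ref{Bas} (``The following technical Lemma will not be proved''), so there is no in-paper argument to compare against; I can only assess your proposal on its own terms. Your approach --- realizing $Y_n$ as $G/\Gamma$ with $G=\on{SL}_n(\RR)\ltimes\RR^n$, $\Gamma=\on{stab}_{\on{SL}_n(\RR)}(\Lambda)\ltimes\Lambda$, exhibiting $S\times_\Sigma U=\Psi(S^{-1}\times U_w)$ for the covering $\Psi(a,v)=a(\Lambda+v)$, and reading off the null boundary from the nullity of $(\partial S^{-1}\times\overline{U_w})\cup(\overline{S^{-1}}\times\partial U_w)$ --- is sound, and the delicate inclusion $\partial B\subseteq\Psi(A)$ is correctly justified by $\Psi$ being open. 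One formula is stated too strongly: $m_{Y_n}(\Psi(A))=c^{-1}(\sigma\otimes\on{Vol}_n)(\Psi^{-1}(\Psi(A)))$ is false in general (the right side would be infinite whenever $\Psi(A)$ has positive measure); what you want is $m_{Y_n}(\Psi(A))\le c^{-1}(\sigma\otimes\on{Vol}_n)(\Psi^{-1}(\Psi(A)))$, which still gives $0$ here since $\Psi^{-1}(\Psi(A))=\bigcup_{\gamma\in\Gamma}A\gamma$ is a countable union of null sets.

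The real gap is in the basis claim, and it is larger than the caveat you flag at the end. Section~B fixes a single $\Lambda$ and $\Sigma$ for the entire discussion, so $\mathcal T$ should be read as the collection of $\Lambda$-$\Sigma$-basic subsets for that one pair. Under this reading your argument for $\on{vec}(M)\neq 0$ does not apply: it begins by replacing $\Lambda$ with $\Lambda_M$ and re-choosing the basis $w$, which is exactly what the fixed-$\Lambda$ formulation forbids. Moreover, even allowing $\Lambda$ to vary, the obstruction is not confined to the embedded copy of $X_n$: whenever some coordinate of a representative of $\on{vec}(M)$ in the $w$-basis is an integer mod~$1$ (a lattice ``face'' point, not only a corner), every box $U\subset[0,1]^n$ whose image contains $M$ must touch $\partial[0,1]^n$, and then $\Psi(S^{-1}\times U_w)$ fails to be open by the same computation you use for lattice points. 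So $\mathcal T$, with $\Lambda$, $\Sigma$ fixed, is not literally a basis of open sets. What your proof does establish --- and what Lemma~\ref{9}, Lemma~\ref{dani} and Theorem~\ref{finish} actually consume --- is that every open $W\subset Y_n$ is, up to an $m_{Y_n}$-null set, an increasing union of finite disjoint unions of basic subsets, and that small sets can be covered by basic subsets of small total measure. It would be cleaner to reformulate Lemma~\ref{Bas} in that weaker form, or to enlarge $\mathcal T$ to finite unions of basic subsets (which your corner-abutting construction already handles), rather than assert the strict basis property.
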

\subsection{Bhargava's Correspondence}
Some of the main tools we use (as do Bhargava-Harron in ~\cite{BhaPip}) involve parametrizations all cubic, quartic and quintic orders which are carried in ~\cite{Bha1,Bha2,FadBook}. 
\begin{definition}
	Let $T$ be a ring. Define $V_T$ to be:
	\begin{enumerate}
		\item the space $Sym^3 T^2(\otimes T)$ of binary cubic forms over $T$, if $n = 3$;
		\item the space $Sym^2 T^3\otimes T^2$ of pairs of ternary quadratic forms over $T$, if $n = 4$;
		\item the space $T^4\otimes\wedge^2 T^5$ of quadruples of alternating quinary $2$-forms over $T$ , if $n = 5$.
	\end{enumerate}
For $n=3,4,5$, we set $r=r(n)=2,3,6$ respectively. Note that $G_T=\operatorname{GL}_{n-1}(T)\times \operatorname{GL}_{r-1}(T)$ acts naturally on $V_T$. The discriminant of an element $v\in V_T$ is a polynomial of degree $d$ in the coefficients of $v$, where $d=4,12,40$ when $n=3,4,5$ respectively (see ~\cite{DiscPaper}). 
\end{definition}
As explained in ~\cite{BhaPip}, the following Theorem can be deduced from ~\cite[\S15]{FadBook}, ~\cite[Corollary 5]{Bha1} and ~\cite[Corollary 3]{Bha2} and is Theorem 2 in ~\cite{BhaPip}:
\begin{theorem}\label{par1}
	The nondegenerate (i.e. with non-zero discriminant) elements of $V_{\ZZ}$ are in canonical bijection with isomorphism classes of pairs $((R,\alpha),(S,\beta))$, where $R$ is a nondegenerate ring of rank $n$ and $S$ is a rank $r$ resolvent ring of $R$, and $\alpha$ and $\beta$ are $\ZZ$-bases for $R/\ZZ$ and $S/\ZZ$, respectively. In this bijection, the discriminant of an element of $V_{\ZZ}$ is equal to the discriminant of the corresponding ring $R$ of rank $n$. Moreover, under this bijection, the action of $G_{\ZZ}$ on $V_{\ZZ}$ results in the corresponding natural action of $G_{\ZZ} = GL_{n-1}(\ZZ)\times GL_{r-1}(\ZZ)$ on $(\alpha, \beta)$. Finally, every isomorphism class of maximal ring $R$ of rank $n$ arises in this bijection, and the elements of $V_{\ZZ}$ yielding $R$ consists of exactly one $G_{\ZZ}$-orbit. We denote the rings corresponding to $v\in V_{\RR}$ by $R_{\ZZ}(v),S_{\ZZ}(v)$ and the bases corresponding to $v$ by $\alpha(v),\beta(v)$.
\end{theorem}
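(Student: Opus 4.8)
\emph{Proof outline.} The plan is to treat the three cases $n=3,4,5$ separately, since the relevant higher composition laws and resolvent constructions are genuinely different in each, and in each case to exhibit the bijection by writing down explicit maps in both directions and checking four things: mutual inversion, $G_{\ZZ}$-equivariance, the discriminant identity, and the maximality/uniqueness clause. For $n=3$ this is the classical Delone--Faddeev parametrization of cubic rings (see ~\cite{FadBook}); for $n=4$ it is Bhargava's parametrization of quartic rings together with their cubic resolvent rings ~\cite{Bha1}; for $n=5$ it is the parametrization of quintic rings together with their sextic resolvent rings ~\cite{Bha2}. The task then reduces to packaging these three results into the single uniform statement above, exactly as is done in ~\cite{BhaPip}.

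For $n=3$ ($r=2$): given a cubic ring $R$ with $\ZZ$-basis $\alpha=(\omega,\theta)$ of $R/\ZZ$, one first normalizes $\omega,\theta$ by adding suitable integers so that $\omega\theta\in\ZZ$; the remaining structure constants are then encoded by a binary cubic form $f\in\Sym^3\ZZ^2\otimes\ZZ$. Conversely, from an arbitrary $f$ one writes down the candidate multiplication table, and associativity turns out to hold automatically (the small miracle of the cubic case), so $f$ yields a genuine cubic ring. The quadratic resolvent $S$ is the quadratic ring of discriminant $\operatorname{disc}(R)$ with its standard basis $\beta$; here $\GL_1(\ZZ)=\{\pm1\}$ acts essentially trivially on the resolvent data, which is why the resolvent is almost canonical for $n=3$. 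The identity $\operatorname{disc}(f)=\operatorname{disc}(R)$ is a direct computation, and $\GL_2(\ZZ)$-equivariance holds because a base change of $R/\ZZ$ induces exactly the twisted action of $\GL_2(\ZZ)$ on binary cubic forms.

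For $n=4$ ($r=3$): to a quartic ring $R$ one attaches a cubic resolvent ring $S$ together with a resolvent map, a quadratic map $R/\ZZ\to S/\ZZ$; choosing bases $\alpha$ of $R/\ZZ$ (rank $3$) and $\beta$ of $S/\ZZ$ (rank $2$) expresses this map as a pair of ternary quadratic forms, that is, an element of $\Sym^2\ZZ^3\otimes\ZZ^2$. Conversely, from $(A,B)$ the binary cubic form $\det(Ax-By)$ (suitably normalized) produces the cubic resolvent $S$ via the $n=3$ step, and the multiplication on $R$ is then reconstructed from $(A,B)$ by explicit formulas; the nontrivial content is that those formulas are associative and carry the prescribed resolvent map. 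For $n=5$ ($r=6$) the analogous picture uses a sextic resolvent ring $S$ and a resolvent map $R/\ZZ\to\wedge^2(S/\ZZ)$, which in bases becomes a quadruple (rank of $R/\ZZ$ is $4$) of alternating $2$-forms in $5$ variables (rank of $S/\ZZ$ is $5$), i.e.\ an element of $\ZZ^4\otimes\wedge^2\ZZ^5$. In both cases the discriminant identity and $G_{\ZZ}$-equivariance follow by unwinding how a change of the pair $(\alpha,\beta)$ acts on the coefficients of $v$.

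The main obstacle is the reverse direction together with the maximality/uniqueness clause. Building a ring from a form amounts to verifying associativity of an explicitly written multiplication, which is vacuous for $n=3$ but a substantial calculation --- carried out in ~\cite{Bha1,Bha2} --- for $n=4,5$, and one must also check that the resolvent construction is internally consistent. The assertion that every maximal ring $R$ of rank $n$ arises from exactly one $G_{\ZZ}$-orbit is proved by reduction to a local statement: over each $\ZZ_p$ one classifies which $G_{\ZZ_p}$-orbits on $V_{\ZZ_p}$ correspond to maximal rings and shows the maximal ones form a single orbit; since $\ZZ$ is a principal ideal domain, a global rank-$n$ ring with its resolvent and bases is recovered from its $p$-adic completions, so the global statement follows. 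All of this is precisely the content of ~\cite{FadBook,Bha1,Bha2}, so the role of the present argument is organizational: to assemble those theorems into the uniform statement above, which is Theorem 2 of ~\cite{BhaPip}.
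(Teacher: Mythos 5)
The paper does not prove Theorem \ref{par1} at all: it is stated as a recollection, with the sentence immediately preceding it saying that the result can be deduced from \cite[\S15]{FadBook}, \cite[Corollary 5]{Bha1} and \cite[Corollary 3]{Bha2}, and that it is Theorem 2 of \cite{BhaPip}. Your proposal correctly identifies the same sources and gives a faithful outline of the content of those references (Delone--Faddeev for $n=3$, Bhargava's higher composition laws for $n=4,5$, together with the local-to-global argument for the maximality/uniqueness clause), so it is consistent with the paper's treatment, just more expository.
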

A {resolvent ring} of a cubic, quartic or quintic ring is some quadratic, cubic, or sextic ring respectively, which satisfies some conditions (see ~\cite{Bha1,Bha2} for more details). Since the precise definition is not required here, we choose not to get into the details of this matter. Theorem \ref{par1} holds with any field $K$ in place of $\ZZ$ with the same proofs as in ~\cite{Bha1,Bha2}. In particular, taking $K=\RR$:
\begin{theorem}\label{par2}
	 There is a canonical bijection between the nondegenerate elements of $V_{\RR}$ and isomorphism classes of pairs $((R,\alpha),(S,\beta))$, where $R$ is a nondegenerate ring of rank $n$ over $R$ and $S$ is the (unique) rank $r$ resolvent ring of $R$ over $\RR$, and $\alpha$ and $\beta$ are $\RR$-bases for $R/\RR$ and $S/\RR$, respectively. Moreover, under this bijection, the action of $G_{\RR}$ on $V_{\RR}$ results in the corresponding natural action of $G_{\RR}$ on $(\alpha, \beta)$. We denote the rings corresponding to $v\in V_{\RR}$ by $R(v),S(v)$.
\end{theorem}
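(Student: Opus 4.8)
The plan is to take the proof of Theorem \ref{par1} essentially verbatim, replacing $\ZZ$ by $\RR$ throughout, and to isolate the single place where the ground object being a field (rather than merely a domain) is used. Recall that the bijection of Theorem \ref{par1} is assembled from explicit, base-independent recipes: for $n=3$ the Delone--Faddeev construction sends a binary cubic $v=(a,b,c,d)$ to a rank $3$ ring $R(v)$ whose multiplication table on a basis $1,\alpha(v)$ has structure constants that are fixed integer polynomials in $a,b,c,d$, and similarly produces the quadratic resolvent $S(v)$ with its basis $\beta(v)$; the quartic and quintic cases are handled by the analogous recipes of ~\cite{Bha1} and ~\cite{Bha2} (and ~\cite{FadBook}). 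Every one of these formulas makes sense over any commutative ring, in particular over $\RR$; applying them defines a map from the nondegenerate elements of $V_{\RR}$ to isomorphism classes of pairs $((R,\alpha),(S,\beta))$ over $\RR$. Conversely, given such a pair, reading off the structure constants of $R$ and $S$ in the bases $\alpha,\beta$ and repackaging them produces an element of $V_{\RR}$; this is the candidate inverse.

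First I would verify that these two maps are mutually inverse and $G_{\RR}$-equivariant. Over $\ZZ$ this is the content of Theorem \ref{par1}, and its proof reduces to a finite list of polynomial identities among the structure constants, together with the purely formal description of how $\GL_{n-1}\times\GL_{r-1}$ acts on the chosen bases. A polynomial identity with integer coefficients that holds over $\ZZ$ holds over every $\ZZ$-algebra; specializing to $\RR$ immediately gives that the two maps above are inverse bijections and that the action of $G_{\RR}$ on $V_{\RR}$ corresponds to the natural action of $G_{\RR}$ on $(\alpha,\beta)$. The discriminant polynomial is likewise unchanged, so $v\in V_{\RR}$ is nondegenerate precisely when $R(v)$ is an \'etale $\RR$-algebra, i.e. $R(v)\cong\RR^a\times\CC^b$ with $a+2b=n$; in particular there are only finitely many $G_{\RR}$-orbits, indexed by the signatures.

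The one genuinely new ingredient --- and the step I expect to be the only subtle point --- is the assertion of \emph{uniqueness} of the resolvent, which can fail over $\ZZ$ (a rank $n$ ring may admit several non-isomorphic rank $r$ resolvents, and it is precisely this extra datum that the $V_{\ZZ}$-orbit remembers) but holds over a field: the set of resolvents of a fixed nondegenerate $R/\RR$ is a torsor under a cohomological obstruction that vanishes over a field, so the rank $r$ resolvent of $R$ over $\RR$ exists and is unique up to isomorphism. This is the content of the parenthetical ``(unique)'' in the statement, and is already observed in ~\cite{Bha1,Bha2}. Granting it, the pair $(S,\beta)$ adds nothing beyond a choice of basis, and Theorem \ref{par2} follows from Theorem \ref{par1} by the base-change-of-identities argument above. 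The bulk of the (short) write-up would thus consist in confirming that resolvent-uniqueness over $\RR$ is correctly quoted from ~\cite{Bha1,Bha2} and that the \'etale condition over $\RR$ matches nonvanishing of the discriminant; everything else is bookkeeping.
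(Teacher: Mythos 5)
Your proposal is correct and takes essentially the same route as the paper, which in fact offers no independent argument at all: the line immediately before Theorem~\ref{par2} reads ``Theorem~\ref{par1} holds with any field $K$ in place of $\ZZ$ with the same proofs as in~\cite{Bha1,Bha2}. In particular, taking $K=\RR$.'' Your write-up is a faithful expansion of that sentence — the structure constants are universal integer polynomials, the bijection and $G$-equivariance reduce to polynomial identities that specialize from $\ZZ$ to any $\ZZ$-algebra, and the only point genuinely new over a field is the uniqueness of the resolvent, which you correctly flag and correctly attribute to~\cite{Bha1,Bha2}. The one slightly loose phrase is characterizing resolvents as ``a torsor under a cohomological obstruction''; in Bhargava's treatment the multiplicity of resolvents over $\ZZ$ is controlled more concretely by the content of the ring, and uniqueness over a field follows from semisimplicity of étale algebras rather than from a cohomological vanishing — but since you explicitly defer to quoting the statement from~\cite{Bha1,Bha2}, this is a matter of phrasing rather than a gap.
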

\begin{remark}\label{bhaComp}
Theorems \ref{par1},\ref{par2} are compatible in the following sense. On the one hand $V_{\ZZ}\subset V_{\RR}$ naturally and if $v\in V_{\ZZ}$ corresponds under Theorem \ref{par1} to the ring $R$, then it corresponds under Theorem \ref{par2} to $R\otimes \RR$. Moreover, given $v\in V_{\ZZ}$ the multiplication tables of the algebras $R,S$ corresponding to $v$ under Theorem \ref{par1} with respect to the bases $\alpha,\beta$ are the same as the multiplication tables of $R',S'$ with respect to the bases $\alpha',\beta'$ which correspond to $v$ when thought of as an element of $V_{\RR}$. This follows from ~\cite[\S15 (1) and (2)]{FadBook} when $n=3$, ~\cite[(14),(21),(22) and (23)]{Bha1} when $n=4$ and by ~\cite[(16),(17),(21) and (22)]{Bha2} when $n=5$, and is written also in the top of page 5 of ~\cite{BhaPip}.	
\end{remark}
\begin{remark}\label{orbs}
	Rank-$n$ rings $R$ over $\RR$ are in particular {\'Etale algebras over $\RR$} of rank $n$, which implies $R\cong \RR^{n-2k}\times \CC^k$ for some $k=0,\dots,\floor{n/2}$. Following the notation of ~\cite{BhaPip}, given $k=0,\dots,\floor{n/2}$, we denote $V_{\RR}^{(k)}$ to be the subset of $V_{\RR}$ of elements for which the corresponding ring $R$ (recall Theorem \ref{par2}) has the structure $\RR^{n-2k}\times \CC^k$. Therefore, and as mentioned in ~\cite{BhaPip}, the nondegenerate orbits for the action of $G_{\RR}$ on $V_{\RR}$ are precisely $V_{\RR}^{(0)},\dots,V_{\RR}^{(\floor{n/2})}$. Moreover, recalling Remark \ref{bhaComp}, we denote $V_{\ZZ}^{(i)}=V_{\RR}^{(i)}\cap V_{\ZZ}$ for $i=1,\dots,\floor{n/2}$.
\end{remark}
\begin{definition}\label{irma}
	Given $v\in V_{\ZZ}$, denote by $R(v)$ the ring corresponding to it under Theorem \ref{par2}.
	\begin{enumerate}
		\item We say that $v$ is {irreducible} if $R(v)$ is isomorphic to an order inside a type $S_n$ number field;
		\item We say that $v$ is {maximal} if $R(v)$ is a maximal ring of rank $n$ over $\ZZ$ (which means it cannot be embedded as a sub $\ZZ$ module inside any other ring of rank $n$ over $\ZZ$).
	\end{enumerate}
	Recalling the natural identification of $V_{\ZZ}$ inside $\ZZ^d$ for an appropriate $d$ (depending on $n=3,4,5$), we say that $v\in \ZZ^d$ is irreducible (respectively, maximal) if the corresponding element of $V_{\ZZ}$ is irreducible (respectively, maximal).
\end{definition}
\begin{Corollary}\label{1:1}
	Let $n=3,4,5$ and $X>0$. There is $1:1$ correspondence between $G_{\ZZ}$-orbits of irreducible maximal elements of $V_{\ZZ}^{(s)}$ with discriminant less than $X$ and rings of integers of isomorphism classes of signature $(r,s)$ number fields with discriminant less than $X$. The correspondence is given by $G_{\ZZ}.v\leftrightarrow R_{\ZZ}(v)$.
\end{Corollary}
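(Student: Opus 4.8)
The plan is to obtain the corollary by unwinding Theorem \ref{par1} and then cutting the resulting bijection down by the three conditions appearing in the statement (irreducibility, maximality, signature, discriminant), using Remarks \ref{bhaComp} and \ref{orbs} and Definition \ref{irma}. First I would translate Theorem \ref{par1} into a statement about $G_\ZZ$-orbits: the theorem gives a canonical bijection between nondegenerate $v\in V_\ZZ$ and isomorphism classes of based pairs $((R,\alpha),(S,\beta))$ under which the $G_\ZZ$-action on $V_\ZZ$ corresponds to changing the bases $\alpha,\beta$; hence passing to $G_\ZZ$-orbits forgets the basis data and yields a bijection between $G_\ZZ$-orbits of nondegenerate elements of $V_\ZZ$ and isomorphism classes of pairs $(R,S)$ with $S$ a rank-$r$ resolvent ring of the rank-$n$ ring $R$. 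The final sentence of Theorem \ref{par1} says that every isomorphism class of \emph{maximal} rank-$n$ ring $R$ arises, and from exactly one $G_\ZZ$-orbit; so restricting to maximal $v$ gives a bijection $G_\ZZ.v\leftrightarrow R_\ZZ(v)$ between $G_\ZZ$-orbits of maximal elements of $V_\ZZ$ and isomorphism classes of maximal rank-$n$ rings over $\ZZ$ (the resolvent $S$ being determined by $R$). Well-definedness and injectivity here are immediate from the ``exactly one orbit'' clause.

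Next I would impose the three conditions. \emph{Irreducibility $+$ maximality}: by Definition \ref{irma}, $v$ is irreducible iff $R_\ZZ(v)$ is an order in a type-$S_n$ number field $\FF$; since $R_\ZZ(v)$ has rank $n$ over $\ZZ$, $R_\ZZ(v)\otimes_\ZZ\QQ=\FF$ has $[\FF:\QQ]=n$, and maximality of $R_\ZZ(v)$ forces $R_\ZZ(v)=\Oo_\FF$, because the ring of integers is the unique maximal order (it is integrally closed and contains every order). Conversely $\Oo_\FF$ for an $S_n$-field $\FF$ of degree $n$ is a maximal rank-$n$ ring which is an order in $\FF$, hence corresponds to an irreducible maximal $v$. \emph{Signature}: by Remark \ref{bhaComp} one has $R_\ZZ(v)\otimes\RR\cong R_\RR(v)$ as $\RR$-algebras, and by Remark \ref{orbs} $v\in V_\ZZ^{(s)}$ iff $R_\RR(v)\cong\RR^{n-2s}\times\CC^s$; since $\Oo_\FF\otimes_\ZZ\RR\cong \FF\otimes_\QQ\RR$, this says exactly that $\FF$ has signature $(n-2s,s)$. \emph{Discriminant}: Theorem \ref{par1} gives $\on{Disc}(v)=\on{Disc}(R_\ZZ(v))$, and $\on{Disc}(\Oo_\FF)=\on{Disc}(\FF)$, so $\abs{\on{Disc}(v)}<X$ matches $\abs{\on{Disc}(\FF)}<X$.

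Finally I would combine these with the elementary fact that $\FF\mapsto\Oo_\FF$ is a bijection from isomorphism classes of degree-$n$ $S_n$-number fields of signature $(n-2s,s)$ onto isomorphism classes of their rings of integers, with inverse $R\mapsto\operatorname{Frac}(R)=R\otimes_\ZZ\QQ$; chaining this with the bijection of the previous paragraph yields the claimed $1{:}1$ correspondence $G_\ZZ.v\leftrightarrow R_\ZZ(v)$. I do not expect a genuine obstacle here — the argument is bookkeeping across the cited parametrization results — but two points need care and deserve to be spelled out. One is the signature step: one must be certain that the $\RR$-algebra structure of $R_\ZZ(v)\otimes\RR$, hence the \'etale type recording the signature, is computed correctly by the real parametrization of Theorem \ref{par2}, which is precisely the content of Remark \ref{bhaComp} (the multiplication tables for $\ZZ$- and $\RR$-coefficients agree). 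The other is verifying that ``maximal $+$ irreducible'' is genuinely equivalent to ``ring of integers of an $S_n$-number field of degree $n$,'' which rests on the uniqueness of the maximal order in a number field.
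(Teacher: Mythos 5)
The paper states Corollary \ref{1:1} without proof, treating it as an immediate consequence of Theorem \ref{par1} together with Definition \ref{irma} and Remarks \ref{bhaComp}, \ref{orbs}. Your proposal is correct and supplies exactly the natural argument the paper leaves implicit: the last clause of Theorem \ref{par1} (every maximal rank-$n$ ring arises from exactly one $G_\ZZ$-orbit) gives the bijection $G_\ZZ.v \leftrightarrow R_\ZZ(v)$ on maximal elements; Definition \ref{irma} plus uniqueness of the maximal order in a number field shows ``irreducible and maximal'' is equivalent to ``ring of integers of a degree-$n$ $S_n$-number field''; Remarks \ref{bhaComp} and \ref{orbs} match $V_\ZZ^{(s)}$ with signature $(n-2s,s)$; and the discriminant compatibility in Theorem \ref{par1} matches the discriminant cutoffs. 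The two points of care you flag (the \'etale type computed via Theorem \ref{par2} genuinely records the signature, and the passage from ``maximal rank-$n$ ring'' to ``maximal order'') are the right ones, and your resolution of each is sound.
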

\begin{definition}\label{contBaseDef}
	Given $n=3,4,5$ we let $v_0\in V_{\RR}$ and let $u^{\perp}=E\subset \RR^n$ such that $u\notin V_0$.
	\begin{enumerate}
		\item For any $v\in V_{\RR}$ let $\operatorname{MT}_v$ denote the multiplication table \ref{MT3} when $n=3$, \ref{MT4} when $n=4$ and \ref{MT5} when $n=5$. Given a rank $n$ ring $R$ and $x_1,\dots,x_{n-1}\in R$ we write $\operatorname{MT}_v(x_1,\dots,x_{n-1})$ if $(x_i)_{i=1}^{n-1}$ satisfies $\operatorname{MT}_v$;
		\item For $i=1,\dots, n-1$, $\overline \alpha_i:V_{\RR}\rightarrow \RR^n$ denote the continuous vector functions such that:
		\begin{equation}
			\operatorname{MT}_{v}(\overline \alpha_1(v),\dots,\overline \alpha_{n-1}(v))
		\end{equation}
	for any $v\in V_{\RR}$ whose existence is guaranteed by \ref{basCons};
		\item Given $g\in \RR_+\Sigma_{n-1}\times \Sigma_{r-1}$ denote $\Lambda_{v_0}(g)=\operatorname{span}_{\ZZ}\{\overline 1,\overline \alpha_1(gv_0),\dots,\overline \alpha_n(gv_0)\}$;
		\item Given a basis $w=(w_j)_{j=1}^{n-1}$ of $V_0$, we define a function $f_{w}:\RR_+\cdot(\Sigma_{n-1}\times\Sigma_{r-1})v_0\rightarrow V_0$ by:
	\begin{equation*}
		f_w(p)=\frac{1}{\inner{u}{1}}\sum_{i=1}^{n-1}\inner{u}{\overline{\alpha}_i(g_pv_0)}w_i,
	\end{equation*}
	where for $p\in \RR_+\cdot\Sigma_{n-1} v_0$, $g_p$ is the element of $\RR_+\cdot\Sigma_{n-1}\times \Sigma_{r-1}$ satisfying $g_pv_0=p$;
	\item Given a basis $w=\{w_1,\dots,w_{n-1}\}$ of $V_0$, we denote:
		\begin{equation}
			\Lambda_w=\operatorname{span}_{\ZZ}\{w_1,\dots,w_{n-1}\}.
		\end{equation}
	\end{enumerate}
	\end{definition}
The main goal of this Subsection is to prove the following structural Lemma:
\begin{Lemma}\label{structure}
	Let $n=3,4,5$, $i=1,\dots,\floor{n/2}$ and let $u^{\perp}=E\subset \RR^n$ be a subspace of co-dimension 1 not containing $\overline 1$. Let $\Sigma_{n-1}'$ be any fundamental domain for the action of $\operatorname{SL}_{n-1}(\ZZ)$ on $\operatorname{SL}_{n-1}(\RR)$ and let $v_0\in V_{\RR}^{(i)}$. Then there exists a fixed basis of $V_0$ given by $w=(w_j)_{j=1}^{n-1}$, a function $\Pi:\mathcal F_{\infty}^{(i)}\rightarrow S_n$ (recall Subsection \ref{Not}) and $g_0\in \operatorname{SL}_{n-1}(\RR)$ such that the fundamental domain:
	\begin{equation}
		\Sigma_{n-1}\vcentcolon=g_0^t\Sigma_{n-1}'g_0^{-t}
	\end{equation}
satisfies that for any $\Lambda_w$-$\Sigma_{n-1}$-basic subset $S\times_{\Sigma_{n-1}} U\subset Y_{n-1}$:
	\begin{equation}
		\mathcal F^{(i)}_X\cap (\Gamma_E^{\Pi})^{-1}(S\times_{\Sigma_{n-1}} U)\xleftrightarrow{\sim}
	\end{equation}
	\begin{equation*}
		\{\text{irreducible, maximal points inside }[0,X] (g_0^{-t}S^{-1}g_0^{t}\times \Sigma_{r-1})v_0\}\cap ((f_w+q_0)\operatorname{mod}\Lambda_{w})^{-1}(U_w)
	\end{equation*}
	where $q_0\in V_0$ is some fixed vector (recall Definition \ref{UW} for definition of $U_w$).  Note that the function $\Gamma_E^{\Pi}:\mathcal F_{\infty}^{(i)}\rightarrow Y_{n-1}$ is defined by $\mathcal F_{\infty}^{(i)}\ni\FF\mapsto \Gamma_E^{\Pi(\FF)}(\FF)$. Moreover, for any $\pi\in S_n$ the function $\Pi$ can be replaced with $\pi\circ\Pi$ (where $(\pi\circ\Pi)(\FF)\vcentcolon=\pi\circ (\Pi(\FF))$).
\end{Lemma}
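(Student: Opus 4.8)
The plan is to unfold the count over number fields into a lattice-point count in $V_{\ZZ}$ via Bhargava's parametrization, and then to chase the lattice attached to a point $v\in V_{\RR}$ through the whole normalization procedure defining $\Gamma_E^{\Pi}$, reading off the data $g_0,w,q_0,\Pi$ at the end.

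First I would invoke Corollary \ref{1:1} to replace isomorphism classes in $\mathcal F^{(i)}_X$ by $G_{\ZZ}$-orbits of irreducible, maximal points $v\in V_{\ZZ}^{(i)}$ of discriminant at most $X$, via $G_{\ZZ}.v\leftrightarrow R_{\ZZ}(v)$. Since $G_{\RR}$ acts transitively on $V_{\RR}^{(i)}$ (Remark \ref{orbs}) and the discriminant is homogeneous of degree $d$ and a relative invariant of $G_{\RR}$ that is trivial on $\on{SL}_{n-1}\times\on{SL}_{r-1}$, the set $\RR_+\cdot(\Sigma_{n-1}'\times\Sigma_{r-1})v_0$ is, up to the finite group of sign choices that I absorb into $\Pi$ below, a fundamental set for the $G_{\ZZ}$-action on $V_{\RR}^{(i)}$, and $\abs{\on{Disc}}\le X$ cuts out exactly the truncation $[0,X]\cdot(\Sigma_{n-1}'\times\Sigma_{r-1})v_0$ after the evident monomial reparametrization of the $\RR_+$-coordinate. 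So each $\FF\in\mathcal F^{(i)}_X$ has a unique representative $v=\lambda(g',g'')v_0$ in this truncated set, irreducible and maximal, and by Theorem \ref{par1}, Remark \ref{bhaComp} and Definition \ref{contBaseDef} it satisfies $\Lambda_{\pi}(\FF)=\on{span}_{\ZZ}\{\overline 1,\overline\alpha_1(v),\dots,\overline\alpha_{n-1}(v)\}$ for $\pi=\Pi(\FF)$ the ordering of archimedean places produced by the correspondence; replacing $\Pi$ by $\pi\circ\Pi$ merely permutes the coordinate functions $\overline\alpha_i$, which is the asserted flexibility.

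Next I would compute $\Gamma_E^{\Pi}(\FF)=\rho_n^{\ell_{\overline 1}}(\Lambda^{\pi}_{\FF}(E))$ from $v$. Write $c=\cov{\Lambda_{\pi}(\FF)}=\abs{\on{Disc}(\FF)}^{1/2}2^{-s}$ (a function of $\lambda$ only) and $h=g^t_{E,\ell_{\overline 1}}$ with the parameter fixed as in Definition \ref{notations}: the hypothesis $\overline 1\notin E$ makes $h$ well-defined, and the inverse transpose $\widetilde h$ of $h$ has $V_0=\ell_{\overline 1}^{\perp}$ and $E^{\perp}$ as eigenspaces, with eigenvalues reciprocal to those of $h$, the decomposition $V_0\oplus E^{\perp}=\RR^n$ being precisely where $u\notin V_0$ enters. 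Then $(\Lambda^{\pi}_{\FF}(E))^{\ast}=\widetilde h\,c^{1/n}\Lambda_{\pi}(\FF)^{\ast}$; intersecting with $V_0$, using $\Lambda_{\pi}(\FF)^{\ast}\cap V_0=(P_{V_0}\Lambda_{\pi}(\FF))^{\ast}$ and $P_{V_0}\overline 1=0$, shows the lattice part of $\Gamma_E^{\Pi}(\FF)$ equals a $\lambda$-independent scalar $\nu$ times the image of $\on{span}_{\ZZ}\{\overline\alpha_1(v_0)^{\vee},\dots,\overline\alpha_{n-1}(v_0)^{\vee}\}$ under the $\on{SL}_{n-1}(\RR)$-element determined by $g'$, where $\{\overline\alpha_i(v_0)^{\vee}\}$ is the basis of $V_0$ dual to $\{P_{V_0}\overline\alpha_i(v_0)\}$ and the precise linear transform is read off from the multiplication tables $\on{MT}_v$. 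Setting $w_i=\nu\,\phi(\overline\alpha_i(v_0)^{\vee})$ makes $\Lambda_w$ unimodular, and letting $g_0\in\on{SL}_{n-1}(\RR)$ be the change of basis from the standard basis of $\RR^{n-1}$ to $w$ identifies this lattice with $g^{-1}\Lambda_w$ for a $g$ conjugate by $g_0^t$ to the $\Sigma_{n-1}'$-reduction of $(g')^{t}$; hence the ``shape'' half of $\Gamma_E^{\Pi}(\FF)\in S\times_{\Sigma_{n-1}}U$ becomes $g'\in g_0^{-t}S^{-1}g_0^{t}$, with $\Sigma_{n-1}=g_0^t\Sigma_{n-1}'g_0^{-t}$ the induced fundamental domain for $\on{stab}_{\on{SL}_{n-1}(\RR)}(\Lambda_w)$. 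For the translating vector, $u(\ell_{\overline 1},\Lambda^{\pi}_{\FF}(E))=\widetilde h\,c^{1/n}\overline 1^{\vee}$, with $\overline 1^{\vee}$ the member of the basis dual to $\{\overline 1,\overline\alpha_i(v)\}$ that pairs to $1$ with $\overline 1$; decomposing $\overline 1^{\vee}$ along $V_0\oplus E^{\perp}$, applying $\widetilde h$, projecting to $V_0$ and expanding in $\{\overline\alpha_i(v)^{\vee}\}$, the contribution of the orthogonal projection and that of the $E^{\perp}$-eigencomponent combine --- with the $\inner{\overline 1}{\overline\alpha_i(v)}$-terms cancelling, the one genuinely delicate point --- to leave exactly the linear expression $f_w(v)$ of Definition \ref{contBaseDef} (the orientation of $\ell_{\overline 1}$ being chosen to fix the sign) plus a $\lambda$-independent vector, i.e.\ $\vek{\Gamma_E^{\Pi}(\FF)}=f_w(v)+q_0\bmod\Lambda_w$ once $g$ is applied; by Definition \ref{BS} the ``translating-vector'' half of the membership then reads $(f_w(v)+q_0)\bmod\Lambda_w\in U_w$.

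Putting the two halves together with the observation that irreducibility and maximality of $v$ are exactly the conditions making $R_{\ZZ}(v)$ the ring of integers of a signature-$(n-2i,i)$ number field with $S_n$ Galois closure yields the stated bijection, and the final assertion follows because permuting archimedean places acts compatibly on $\Pi$ and on the coordinates of the $\overline\alpha_i$. The main obstacle is the bookkeeping of the previous paragraph: carrying every transpose, inverse and $g_0$-conjugation correctly through the composition of covolume normalization, the non-self-adjoint deformation $g^t_{E,\ell_{\overline 1}}$ (self-adjoint only in the excluded case $E=V_0$), dualization, intersection with $V_0$ and adjunction of the projected next-layer vector; pinning down the explicit $g_0,w,q_0,\Pi$; and in particular verifying the cancellation that turns $\inner{\overline 1}{\overline\alpha_i(v)}$ into $\inner{u}{\overline\alpha_i(v)}$ in the formula for $f_w$. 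The discriminant truncation and the transfer of the ``irreducible, maximal'' conditions are then routine.
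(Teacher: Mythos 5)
Your proposal follows the same overall strategy as the paper's proof: use Corollary \ref{1:1} to replace isomorphism classes of number fields with $G_{\ZZ}$-orbits of irreducible maximal points $v=t(g,g')v_0$ in the truncated fundamental set, express $\Lambda_{\pi}(\FF)$ in terms of the Archimedean basis $\overline\alpha_i(v)$ with $\Pi$ defined so that the embeddings match the constructed $\overline\alpha_i$, and then push $\Lambda_{\pi}(\FF)$ through covolume normalization, the $E,\ell_{\overline 1}$ deformation, dualization, intersection with $V_0$, and the translating-vector projection to read off $g_0$, $w$, $q_0$, and the membership condition $(f_w+q_0)\bmod\Lambda_w\in U_w$.

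The only point of divergence worth flagging is in the dual-lattice bookkeeping. The paper constructs the dual basis $\beta_i$ explicitly via the cross-product formula of Equation (\ref{brei}), verifies the orthogonality relations $\inner{\beta_i}{\alpha_j^0}=\pm\delta_{ij}$ and $\inner{\beta_i}{\overline 1}=0$ by a direct computation with mixed products, and then uses continuity of $g\mapsto\beta_i(g)$ to extract $g_0$ and $w$. You instead work with the inverse transpose $\widetilde h$ of $g^t_{E,\ell_{\overline 1}}$, exploit that its eigenspaces are $V_0$ and $E^{\perp}$, and use standard identities $(A\Lambda)^*=A^{-T}\Lambda^*$ and $\Lambda^*\cap\ell^{\perp}=(P_{\ell^{\perp}}\Lambda)^*$. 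This is a cleaner conceptual package, but note one small miscue in your narrative: in the paper's computation the coefficients $\inner{u}{\overline\alpha_i}$ of $f_w$ arise directly from the decomposition $\overline\alpha_i=(\overline\alpha_i)_{\perp}+c_i\overline 1$ with $(\overline\alpha_i)_{\perp}\in E=u^{\perp}$, which forces $c_i=\inner{u}{\overline\alpha_i}/\inner{u}{\overline 1}$; there is no cancellation of $\inner{\overline 1}{\overline\alpha_i}$-terms against anything. (In your route, projecting $\widetilde h(\overline 1^{\vee})$ to $V_0$ and expanding in the $\overline\alpha_i^{\vee}$-basis does produce a correction term involving $\inner{\overline 1}{\overline\alpha_j}$ from $P_{V_0}(u)$, but it is absorbed into the $a$-component of $\overline 1^{\vee}$, and the net coefficient comes out as $\inner{u}{\overline\alpha_j}/\inner{u}{\overline 1}$; you should check this rather than take it on faith.) With that caveat the proposal is sound and, once the flagged bookkeeping is carried out, recovers the same $g_0$, $w$, $q_0$, $\Pi$ and the same bijection; the final assertion about $\pi\circ\Pi$ follows exactly as in the paper, from the continuity of the $g$-dependence.
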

\begin{proof}
	Let $i=1,\dots,\floor{n/2}$ and pick $v_0\in V_{\RR}^{(i)}$. Pick any fundamental domain $\Sigma_{n-1}'$ for the action of $\operatorname{SL}_{n-1}(\ZZ)$ on $\operatorname{SL}_{n-1}(\RR)$ and similarly pick $\Sigma_{r-1}$ to be a fundamental domain for the action of $\operatorname{SL}_{r-1}(\ZZ)$ on $\operatorname{SL}_{r-1}(\RR)$. For any $v=t(g,h)v_0\in \RR_+\cdot (\Sigma'_{n-1},\Sigma_{r-1})v_0$ recall Definiton \ref{contBaseDef} and denote:
\begin{equation}
	\Lambda_v=\Lambda_{v_0}(t(g,h)).
\end{equation}
Next, write:
\begin{equation}
	\Lambda_v=\operatorname{span}_{\ZZ}\{\overline{\alpha}_1(v),\dots,\overline{\alpha}_{n-1}(v),\overline 1\},D_v=\operatorname{cov}(\Lambda_v)^{-1/n}.
\end{equation}	
This is the lattice we start with. Next we follow the steps of normalization which are described in Equation (\ref{20:30}) and find $\Lambda_v(E)$ and $\Gamma_E(\Lambda(v))$ explicitly using $v_0,t(g,h)$. The first step is to normalize the covolume of $\Lambda_v$ by defining:
\begin{equation}
	\Lambda_v^0=\operatorname{span}_{\ZZ}\{D_v\overline{\alpha}_1(v),\dots,D_v\overline{\alpha}_{n-1}(v),D_v\overline{1}\}.
\end{equation}
The next step is acting on $\Lambda^0_v$ with the $E,\RR\cdot \overline{1}$ normalization group from Definition \ref{norm}. To this end, write: 
\begin{equation}
	\overline{\alpha}_i=(\overline{\alpha_i})_{\perp}+c_i\overline 1\text{ such that }(\overline{\alpha}_i)_{\perp}\in E,i=1,\dots,n-1
\end{equation}
and obviously:
\begin{equation}
	D_v\overline{\alpha}_i=D_v(\overline{\alpha_i})_{\perp}+D_vc_i\overline 1\text{ such that }(\overline{\alpha}_i)_{\perp}\in E,i=1,\dots,n-1.
\end{equation}
Denote $\lambda_v=n^{-1/2}D_v^{-1}$. The appropriate element $g_t$ from the $E,\RR\overline 1$ normalization group is defined uniquely by:
\begin{equation*}
	g_t(\overline 1)=\lambda_v\overline 1;
\end{equation*}
\begin{equation*}
	g_t(e)=\lambda_v^{-\frac{1}{n-1}}e\text{ for every }e\in E.
\end{equation*}
Denote also:
\begin{equation}
	\alpha_i^0\vcentcolon=\lambda_v^{-\frac{1}{n-1}}D_v(\overline{\alpha}_i)_{\perp}+c_iD_v\lambda_v\overline 1=g_t(D_v\overline{\alpha}_i).
\end{equation}
Let
\begin{equation}\label{brei}
	\beta_i=\frac{\lambda_v^{\frac{1}{n-1}}\left((\overline{\alpha}_1)_{\perp}\times\dots \times(\overline{\alpha}_{i-1})_{\perp}\times\overline 1\times(\overline{\alpha}_{i+1})_{\perp}\dots \times (\overline{\alpha}_{n-1})_{\perp}\right)}{D_v\inner{(\overline{\alpha}_1)_{\perp}\times\dots\times(\overline{\alpha}_{n-1})_{\perp}}{\overline 1}}
\end{equation}
and denote:
\begin{equation}\label{tling}
		\tau_v\vcentcolon=\frac{\alpha_1^0\times\dots\times\alpha_{n-1}^0}{\inner{\alpha_1^0\times\dots\times\alpha_{n-1}^0}{D_v\lambda_v\overline 1}},P_{V_0}(\tau_v)= \frac{P_{V_0}(\alpha_1^0\times\dots\times\alpha_{n-1}^0)}{\inner{\alpha_1^0\times\dots\times\alpha_{n-1}^0}{D_v\lambda_v\overline 1}}=\frac{P_{V_0}(u)}{\inner{u}{1}}+\sum_{i=1}^{n-1}c_i\beta_i.
	\end{equation}
Note that we have found $\Lambda_v(E)$ and it is given by:
\begin{equation}\label{15:04}
	\Lambda_v(E)=\operatorname{span}_{\ZZ}\{\lambda_vD_v\overline 1,\alpha_1^0,\dots,\alpha_{n-1}^0\}
\end{equation}
(recall Definition \ref{notations}). Next we find $\Gamma_E(\Lambda_v)$. By Equation (\ref{14:49}): 
\begin{equation}\label{2:05}
	P_{V_0}\Lambda_v(E)=g_0g^tg_0^{-1}(P_{V_0}\Lambda_{v_0}(E))
\end{equation}
where:
\begin{equation*}
	g_0=\begin{pmatrix}
	\mid\ & \cdots &\mid \\
	P_{V_0}\alpha_1^0(v_0) & \cdots & P_{V_0} \alpha_{n-1}^0(v_0)\\
	\mid & \cdots & \mid
	\end{pmatrix}
\end{equation*}
and therefore $\beta_i=\beta_i(g)$ depends only on $g$ for every $i=1,\dots,n-1$. We claim that $\{\beta_i(g)\}_{i=1}^{n-1}$ is a basis for the lattice:
	\begin{equation}\label{2:06}
		\left(\Lambda_{v}(E)\right)^*\cap V_0=\left(P_{V_0}\Lambda_v(E)\right)^*=\left(g_0g^tg_0^{-1}(P_{V_0}\Lambda_{v_0}(E))\right)^*
	\end{equation}
where the first equality holds by known identities of the dual lattice and the second by Equation (\ref{2:05}). Indeed, by definition of the dual lattice, it suffices to check that $\inner{\beta_i}{\alpha_j^0}\in\{\delta_{ij},-\delta_{ij}\}$ and that $\inner{\beta_i}{\lambda_vD_v\overline 1}=0$ for every $i,j=1,\dots,n-1$. Indeed, let $i=1,\dots,n-1$. Then:
\begin{equation}
	\inner{\beta_i}{\alpha_i^0}=\inner{\frac{\lambda_v^{\frac{1}{n-1}}\left((\overline{\alpha}_1)_{\perp}\times\dots \times(\overline{\alpha}_{i-1})_{\perp}\times\overline 1\times(\overline{\alpha}_{i+1})_{\perp}\dots \times (\overline{\alpha}_{n-1})_{\perp}\right)}{D_v\inner{(\overline{\alpha}_1)_{\perp}\times\dots\times(\overline{\alpha}_{n-1})_{\perp}}{\overline 1}}}{\lambda_v^{-\frac{1}{n-1}}D_v(\overline{\alpha}_i)_{\perp}+c_iD_v\lambda_v\overline 1}
\end{equation}
\begin{equation*}
	=\inner{\frac{\lambda_v^{\frac{1}{n-1}}\left((\overline{\alpha}_1)_{\perp}\times\dots \times(\overline{\alpha}_{i-1})_{\perp}\times\overline 1\times(\overline{\alpha}_{i+1})_{\perp}\dots \times (\overline{\alpha}_{n-1})_{\perp}\right)}{D_v\inner{(\overline{\alpha}_1)_{\perp}\times\dots\times(\overline{\alpha}_{n-1})_{\perp}}{\overline 1}}}{\lambda_v^{-\frac{1}{n-1}}D_v(\overline{\alpha}_i)_{\perp}}
\end{equation*}
\begin{equation*}
	=\inner{\frac{\left((\overline{\alpha}_1)_{\perp}\times\dots \times(\overline{\alpha}_{i-1})_{\perp}\times\overline 1\times(\overline{\alpha}_{i+1})_{\perp}\dots \times (\overline{\alpha}_{n-1})_{\perp}\right)}{\inner{(\overline{\alpha}_1)_{\perp}\times\dots\times(\overline{\alpha}_{n-1})_{\perp}}{\overline 1}}}{(\overline{\alpha}_i)_{\perp}}
\end{equation*}
\begin{equation*}
	=\frac{\inner{(\overline{\alpha}_1)_{\perp}\times\dots \times(\overline{\alpha}_{i-1})_{\perp}\times\overline 1\times(\overline{\alpha}_{i+1})_{\perp}\dots \times (\overline{\alpha}_{n-1})_{\perp}}{(\overline{\alpha}_i)_{\perp}}}{\inner{(\overline{\alpha}_1)_{\perp}\times\dots\times(\overline{\alpha}_{n-1})_{\perp}}{\overline 1}}
\end{equation*}
\begin{equation*}
	=\frac{(-1)^i\inner{(\overline{\alpha}_1)_{\perp}\times\dots\times(\overline{\alpha}_{n-1})_{\perp}}{\overline 1}}{\inner{(\overline{\alpha}_1)_{\perp}\times\dots\times(\overline{\alpha}_{n-1})_{\perp}}{\overline 1}}=(-1)^i
\end{equation*}
where the first equality holds by definition of $\beta_i,\alpha_i^0$, the second holds by definition of the cross product, the third is elimination of the constants, the forth is taking out the constant and the fifth holds by two general properties of the mixed product saying that for any $u_1\dots,u_n\in \RR^n$:
\begin{equation*}
	\inner{u_1\times\dots\times u_{n-1}}{u_n}=\inner{u_n\times u_1\dots\times u_{n-2}}{u_{n-1}}.
\end{equation*}
and that for any $\pi\in S_{n-1}$:
\begin{equation*}
	u_1\times\dots\times u_{n-1}=\operatorname{sgn}(\pi)u_{\pi(1)}\times\dots\times u_{\pi(n-1)}.
\end{equation*}
Let $i\neq j$ be in $\{1,\dots,n-1\}$. Then:
\begin{equation}
	\inner{\beta_i}{\alpha_j^0}=\inner{\frac{\lambda_v^{\frac{1}{n-1}}\left((\overline{\alpha}_1)_{\perp}\times\dots \times(\overline{\alpha}_{i-1})_{\perp}\times\overline 1\times(\overline{\alpha}_{i+1})_{\perp}\dots \times (\overline{\alpha}_{n-1})_{\perp}\right)}{D_v\inner{(\overline{\alpha}_1)_{\perp}\times\dots\times(\overline{\alpha}_{n-1})_{\perp}}{\overline 1}}}{\lambda_v^{-\frac{1}{n-1}}D_v(\overline{\alpha}_j)_{\perp}+c_jD_v\lambda_v\overline 1}=0
\end{equation}
because the element in the left side of the inner product is perpendicular to both $\overline 1$ and to $(\overline{\alpha}_j)_{\perp}$. By the same reason, $\inner{\beta_i}{\overline 1}=0$ for every $i$.
Since for each $i$, $\beta_i(g)$ is a continuous function in $g$ and by Equation (\ref{2:06}) above, there exists some fixed basis $w_1,\dots,w_{n-1}$ of the lattice $(P_{V_0}\Lambda_{v_0}(E))^*$ such that:
\begin{equation}\label{late}
	\beta_i(g)=g_0^{t}g^{-1}g_0^{-t}w_i\text{ for all }i=1,\dots,n-1.
\end{equation}
Define $\Sigma_{n-1}=g_0^t\Sigma_{n-1}'g_0^{-t}$ and write $\Lambda_w=\operatorname{span}_{\ZZ}\{w_1,\dots,w_{n-1}\}$. We continue to calculate $\Gamma_E(\Lambda(v))$. Recall that by Definition \ref{rdef}, $\rho_n:X_n\rightarrow Y_{n-1}$ is an identification between lattices and grids and that by Definition \ref{notations}: $\Gamma_E(\Lambda(v))=\rho_n(\Lambda_v(E)^*)$. Write by Equation (\ref{15:04}) and by the definitions of $\beta_i,\tau_v$ (recall Equations (\ref{brei}),(\ref{tling})):
\begin{equation}
	(\Lambda_v(E))^*=\left(\begin{pmatrix}
	\mid\ & \cdots &\mid & \mid \\
	\alpha_1^0 & \cdots & \alpha_{n-1}^0& \lambda_vD_v\overline 1\\
	\mid & \cdots & \mid & \mid
	\end{pmatrix}\ZZ^n\right)^*=\begin{pmatrix}
	\mid\ & \cdots &\mid & \mid \\
	\alpha_1^0 & \cdots & \alpha_{n-1}^0& \lambda_vD_v\overline 1\\
	\mid & \cdots & \mid & \mid
	\end{pmatrix}^{-t}\ZZ^n=\begin{pmatrix}
	\mid\ & \cdots &\mid & \mid \\
	\beta_1 & \cdots & \beta_{n-1}& \tau_v\\
	\mid & \cdots & \mid & \mid
	\end{pmatrix}\ZZ^n
\end{equation}
which means that $(\Lambda_v(E))^*=\operatorname{span}_{\ZZ}\{\beta_1,\dots,\beta_{n-1},\tau_v\}$ and since $\beta_i\in V_0$ for every $i=1,\dots,n-1$, we can take $u=\tau_v$ in Definition \ref{rdef} of $\rho_n$ and deduce that the first layer is given by:
\begin{equation}\label{15:45}
	\Gamma_E(\Lambda(v))=\rho_n(\Lambda_v(E)^*)=\operatorname{span}_{\ZZ}\{\beta_1,\dots,\beta_{n-1}\}+P_{V_0}(\tau_v).
\end{equation}
We already know (see Corollary \ref{1:1}) that for any $v_0\in V_{\RR}^{(0)}$ (recall Remark \ref{orbs}) irreducible maximal points $v=t(g,g')v_0\in V_{\ZZ}\cap \RR_+\cdot (\Sigma'_{n-1},\Sigma_{r-1})v_0$ correspond under $v\mapsto R_{\ZZ}(v)$ to rings of integers of equivalence classes of type $S_n$ number fields $\FF$. Denote the number field corresponding to $v$ by $\FF_{v}$.
	 Recall that in Notation \ref{ord} we fixed (in particular) an ordering for the natural maps of $\FF_v$, whom we denote by $\sigma^v_1,\dots,\sigma^v_n$. Let $\pi(\FF_v)\in S_n$ be a permutation such that:
	\begin{equation}
		(\sigma^v_{\pi(\FF_v)(1)},\dots,\sigma^v_{\pi(\FF_v)(n)})\alpha_i(t(g,g')v_0)=\overline \alpha_i(t(g,g')v_0);\forall i=1,\dots,n
	\end{equation}
(recall \ref{basCons} for definition of $\overline{\alpha}_i$) and define:
\begin{equation}
	\Pi(\FF_v)=\pi(\FF_v).
\end{equation}
Let $v=t(g,g')v_0\in V_{\ZZ}\cap \RR_+\cdot (\Sigma_{n-1}',\Sigma_{r-1})v_0$ be irreducible maximal. Applying Equations (\ref{15:45}) and (\ref{2:06}) in particular to $v$ shows that:
\begin{equation}
	\Gamma^{\pi}_E(\FF_v)=\Gamma_E(\Lambda(v))=\operatorname{span}_{\ZZ}\{\beta_1,\dots,\beta_{n-1}\}+P_{V_0}(\tau_v)=g_0^{t}g^{-1}g_0^{-t}(\Lambda_{v_0}(E)^*\cap V_0)+P_{V_0}(\tau_v)
\end{equation}
which shows that $\Gamma^{\pi}_E(\FF_v)\in S\times_{\Sigma_{n-1}} U$ if and only if $g_0^{t}g^{-1}g_0^{-t}\in S$ and $P_{V_0}(\tau_v)\in g_0^{t}g^{-1}g_0^{-t}U\operatorname{mod}g_0^{t}g^{-1}g_0^{-t}\Lambda_w$ if and only if $g^{-1}\in g_0^{-t}Sg_0^{t}$ and:
\begin{equation}
	f_w(v)\vcentcolon=\frac{P_{V_0}(u)}{\inner{u}{1}}+\frac{1}{\inner{u}{1}}\sum_{i=1}^{n-1}\inner{u}{\overline{\alpha}_i(gv_0)}w_i\in g_0^{t}g^{-1}g_0^{-t}U\operatorname{mod}g_0^{t}g^{-1}g_0^{-t}\Lambda_w
\end{equation}
if and only if:
\begin{equation}
	v\in \{\text{irreducible, maximal points inside }[0,X] (g_0^tSg_0^{-t}\times \Sigma_{r-1})v_0\}\cap (f_w\operatorname{mod}\Lambda_w)^{-1}(U_{w})
\end{equation}
where the second equivalence holds by Equation (\ref{late}). Note that the only property of $\Pi$ we used is the continuity of $\beta_i(g)\in \RR^n$. Since we for any $\pi\in S_n$, the functions $\pi(\beta_i(g))$ are also continuous in $g$, the same argument shows that we could have used $\pi\circ\Pi$ as claimed.
\end{proof}
\newpage
\subsection{Weyl-Type Theorem and Verification of Analytical Properties}\label{ver}
For Theorem \ref{myWeyl} below, the following definition comes handy.
\begin{definition}\label{ED}
	Let $n,k\in \NN$, $F:D\subset \mathbb R^n\rightarrow \mathbb R^k$, $\Lambda_k\subset \RR^{k}$ a lattice, $\{B_R\}_{R\geq 0}$ a monotonic increasing family of subsets of $\RR^n$ and $\Gamma\subset \RR^n$ a discrete subset. We say that $F\operatorname{mod}\Lambda_k(\Gamma)$ is $B_R$-equidistributed if the sequence of discrete counting measures $\sigma^D_R$ of $\Gamma$ points in $B_R\cap D$ satisfies:
	\begin{equation}
		\lim_{R\rightarrow \infty}(F\operatorname{mod}\Lambda_k)_*\sigma^D_R=\lambda_k
	\end{equation} 
in the weak sense, where $F\operatorname{mod}\Lambda_k:D\rightarrow\RR^k/\Lambda_k$ is the composition of $F$ with the quotient map, $(F\operatorname{mod}\Lambda_k)_*\sigma^D_R$ denotes the push-forward measure, and $\lambda_k$ is the Haar probability measure on the $k$-dimensional torus $\RR^k/\Lambda_k$. When $\Lambda_k=\ZZ^k$ we abbreviate $F\operatorname{mod}\Lambda_k(\Gamma)=F\operatorname{mod}1(\Gamma)$.
\end{definition}
The following Theorem from ~\cite{myNote} is the main tool in the proof of Theorem \ref{torsion2}.

\begin{theorem}[~\cite{myNote}]\label{myWeyl}
	Let $n,D,k\in \NN$. Let $S$ be a level surface of some smooth homogeneous function $F:\RR^n\setminus\{\overline 0\}\rightarrow \RR_+$ (namely $S=\{x\in \RR^n:F(x)=t\}$ for some $t>0$), let $A\subset S$ be open and Jordan measurable in $S$ and let $f=(f_1,\dots,f_k):\RR^n\setminus \{\overline 0\}\rightarrow \RR^k$ be homogeneous of degree $D$ and smooth inside $\RR_+\cdot A$. Let $\Gamma\subset \RR^n$ be a lattice. Assume that there exists $v\in \Gamma$ such that:
	\begin{equation*}
		\left(\frac{\partial^d f}{\partial v^d}\operatorname{mod}1\right)_*m_A\vcentcolon=\left(\frac{\partial^d f_1}{\partial v^d}\operatorname{mod}1,\dots,\frac{\partial^d f_k}{\partial v^d}\operatorname{mod}1\right)_*m_A\ll \lambda_k
	\end{equation*}
where $m_A$ is the $n-1$-dimensional surface area on $A$ and $\lambda_k$ is the Lebesgue measure on $(S^1)^k$. Then:
\begin{equation}\label{15:23}
	f\operatorname{mod}1(\Gamma)\text{ is $[0,R]\cdot A$-equidistributed}.
\end{equation}
For a subset $I\subset \RR_+$, $I\cdot A$ denotes the set $\{ta:t\in I,a\in A\}$.
\end{theorem}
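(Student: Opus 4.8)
The plan is to verify Weyl's equidistribution criterion for the pushforward measures and to control the resulting exponential sums by differencing $d$ times along the lattice direction $v$. First I would record that for $R$ large the set $[0,R]\cdot A$ is a Jordan region in $\RR^n$ of volume $\asymp R^n$, so that $N_R:=\#(\Gamma\cap[0,R]\cdot A)\sim \operatorname{vol}([0,R]\cdot A)/\operatorname{covol}(\Gamma)$; by Weyl's criterion, \eqref{15:23} is equivalent to
\begin{equation*}
	S_R(m):=\frac{1}{N_R}\sum_{\gamma\in\Gamma\cap[0,R]\cdot A}e\big(\langle m,f(\gamma)\rangle\big)\longrightarrow 0
\end{equation*}
for every nonzero $m\in\ZZ^k$, where $e(x)=e^{2\pi ix}$. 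Fixing such an $m$, I set $\phi:=\langle m,f\rangle$, which is real, homogeneous of degree $D$, and smooth on $\RR_+\cdot A$; then $\partial_v^d\phi=\langle m,\partial_v^d f\rangle$ is homogeneous of degree $D-d$.

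Next I would decompose $\Gamma$ into cosets of $\ZZ v$ and apply the van der Corput inequality $d$ times along each line, with shift parameters $1\le h_1,\dots,h_d\le H$ for a slowly growing $H=H(R)\to\infty$, $H=o(R)$. This produces a bound of the shape
\begin{equation*}
	|S_R(m)|^{2^d}\ll\frac{1}{N_R}+\frac{1}{H^d}\sum_{1\le h_1,\dots,h_d\le H}\Big|\frac{1}{N_R}\sum_{\gamma}e\big(\Delta_{h_1v}\cdots\Delta_{h_dv}\phi(\gamma)\big)\Big|,
\end{equation*}
where the lattice points lost near $\partial([0,R]\cdot A)$ contribute $o(1)$ because an $O(H)$-collar of that boundary has volume $o(R^n)$ when $H=o(R)$ — this is where the Jordan measurability of $A$ enters. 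A multilinear Taylor expansion replaces the $d$-fold difference by $h_1\cdots h_d\,\partial_v^d\phi(\gamma)$ up to an error which, after a dyadic decomposition into radial shells and use of the homogeneity of $\phi$, is smaller than the main term by a factor $O(H/R)$ away from a negligible cone about the origin; so I may replace the differenced phase by $h_1\cdots h_d\,\partial_v^d\phi(\gamma)$.

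It then remains to show $\frac{1}{H^d}\sum_{h_i\le H}\big|\frac1{N_R}\sum_\gamma e(h_1\cdots h_d\,\partial_v^d\phi(\gamma))\big|\to0$. Rescaling by $1/R$, the points $\gamma/R$ equidistribute for Lebesgue measure on the solid truncated cone $C=[0,1]\cdot A$, so for each fixed integer $\ell\ge1$ the inner sum tends to $\widehat{\mu_m}(\ell)$, where $\mu_m:=(\partial_v^d\phi\bmod1)_*(\mathrm{Leb}_C)$ is a measure on the circle. Here the hypothesis is used: $(\partial_v^df\bmod1)_*m_A\ll\lambda_k$ forces $\langle m,\partial_v^df(a)\rangle\notin\ZZ$ for $m_A$-almost every $a\in A$ (because $m^{\perp}+\ZZ^k$ is a null subtorus), and fibering $C$ over $A$ along rays, using the homogeneity of $\partial_v^d\phi$, then gives $\mu_m\ll\lambda_1$, whence $\widehat{\mu_m}(\ell)\to0$ by Riemann–Lebesgue. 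Finally $\frac1{H^d}\sum_{h_i\le H}|\widehat{\mu_m}(h_1\cdots h_d)|\to0$ as $H\to\infty$, since $|\widehat{\mu_m}(\ell)|$ is small for large $\ell$ and only $o(H^d)$ of the tuples $(h_1,\dots,h_d)\in[1,H]^d$ have $h_1\cdots h_d$ bounded; choosing $H=H(R)$ to grow slowly enough to also absorb the earlier error terms finishes the argument.

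I expect the main obstacle to be the interface between the two regimes: the hypothesis concerns the surface measure $m_A$ on $A$, while the exponential sum lives over the full dilated region $[0,R]\cdot A$. Making the dyadic-shell rescaling precise — so that radial spreading converts the surface-measure hypothesis into the absolute continuity of $\mu_m$ on the circle, uniformly in $m$, while a single choice of $H=H(R)$ keeps the van der Corput error terms and the edge losses under control — is the technical heart, and the place where one must be careful about the trichotomy $D>d$, $D=d$, $D<d$ (the last requiring one to excise a small neighbourhood of the origin, where $\partial_v^d\phi$ is unbounded, at the cost of only $o(R^n)$ lattice points).
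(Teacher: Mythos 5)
The paper does not prove Theorem~\ref{myWeyl}: it is imported verbatim from the external reference~\cite{myNote}, so there is no internal proof to compare against. Your proposal must therefore be judged on its own terms.

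Your van~der~Corput strategy — differencing $D$ times along the fixed $v\in\Gamma$, replacing the $D$-fold difference by $h_1\cdots h_D\,\partial_v^D\phi$ via a multilinear Taylor expansion, rescaling $\gamma\mapsto\gamma/R$ to convert the exponential sum into a Fourier coefficient of a pushforward measure on the circle, and invoking Riemann--Lebesgue — is sensible and compatible with the hypothesis, which is precisely calibrated so that $\partial_v^D f$ is homogeneous of degree zero (constant on rays), hence descends to the level surface $S$. (Note that the $d$ in the theorem statement is not declared; from the way the theorem is invoked in Lemma~\ref{calc} it is clearly $d=D$, so your trichotomy $D\gtrless d$ is, in this paper's use case at least, moot.)

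There is, however, a genuine logical gap at the step where you convert the $\TT^k$-valued hypothesis into absolute continuity of the circle measure $\mu_m$. You write that $(\partial_v^D f\bmod 1)_*m_A\ll\lambda_k$ ``forces $\langle m,\partial_v^D f(a)\rangle\notin\ZZ$ for $m_A$-almost every $a$'' and that fibering along rays ``then gives $\mu_m\ll\lambda_1$.'' But the intermediate fact you cite (a.e.\ non-integrality of $\langle m,\partial_v^D f\rangle$) does \emph{not} imply absolute continuity of the one-dimensional pushforward: a function identically equal to $1/2$ has $\langle m,\cdot\rangle$ a.e.\ non-integral for odd $m$, yet its pushforward is a point mass. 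What you actually need — and what is true — is the stronger statement that if $\nu\ll\mu$ then $T_*\nu\ll T_*\mu$ for any measurable $T$, applied to the group homomorphism $L_m:\TT^k\to\TT$, $x\mapsto\langle m,x\rangle\bmod 1$, which pushes $\lambda_k$ to $\lambda_1$. That gives $(\langle m,\partial_v^D f\rangle\bmod1)_*m_A\ll\lambda_1$ directly, and then the ray-fibering argument (valid since $\partial_v^D\phi$ is degree-$0$ homogeneous and $C=[0,1]\cdot A$ disintegrates over $A$ with a bounded positive radial weight) does yield $\mu_m\ll\lambda_1$. As written, your argument passes through a true-but-insufficient intermediate claim and then simply asserts the conclusion; the repair is easy, but it must be made.

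Beyond that, the technical interfaces you flag (boundary losses of size $O(H/R)$, uniformity of the Riemann-sum limit over the range $\ell=h_1\cdots h_D\le H^D$ with a slowly growing $H=H(R)$, Riemann integrability of $e(\ell\,\partial_v^D\phi)$ on $C$ despite the singularity of $\partial_v^D\phi$ at the origin) are the usual ones in such arguments and appear to be manageable: the integrand is bounded with values on the unit circle and continuous off a Lebesgue-null set, the boundary collar has measure $o(R^n)$ because $A$ is Jordan measurable, and a diagonal choice of $H(R)$ absorbs the remaining errors. So the proposed route, with the one correction above, looks viable.
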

\begin{definition}\label{DeCo}
Let $S\subset \RR^d$ be a level surface of some smooth homogenous function. 
	\begin{enumerate}
		\item A subset $D_{\ZZ}\subset \ZZ^d$ is said to be of $S$-density 1 in $\ZZ^d$ if:
	\begin{equation}
		\lim_{R\rightarrow \infty}\frac{\abs{(([0,R]\cdot S)\cap D_{\ZZ})}}{\operatorname{Vol}_{d}([0,1]\cdot S)R^d}=1;
	\end{equation}
		\item A subset $C_{\ZZ}\subset \ZZ^d$ is said to be a {finite} congruence set if it is defined by union of finitely many congruence conditions modulo coprime numbers on the entries of $v\in C_{\ZZ}$ (namely conditions of the form $v_i\operatorname{mod}p=s$ where $v=(v_1\dots,v_n)$, $p,s$ are integers and $p$ is taken from a set of coprime numbers). Similarly, a set $C_{\ZZ}\subset \ZZ^d$ is said to be an {infinite} congruence set if it is defined by infinitely many congruence conditions as above. 
	\end{enumerate}
\end{definition}
We will require a modification of Theorem \ref{myWeyl}:
\begin{Corollary}\label{CRT}
	Under the assumptions of Theorem \ref{myWeyl} Equation (\ref{15:23}) holds for $C_{\ZZ}\cap D_{\ZZ}$ instead of $\Gamma$ where $D_{\ZZ}$ is any set of $S$-density $1$ in $\ZZ^d$ and $C_{\ZZ}$ is any finite congruence set.
\end{Corollary}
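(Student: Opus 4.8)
The plan is to deduce the Corollary from Theorem \ref{myWeyl} itself, applied to ordinary lattices, by stripping off the two new features one at a time: first the density-$1$ set $D_\ZZ$, and then the congruence set $C_\ZZ$, which the Chinese Remainder Theorem turns into a finite union of translated sublattices. For the first reduction, note that since $A$ is open and Jordan measurable in the level surface $S$, the truncated cone $[0,R]\cdot A$ is Jordan measurable in $\RR^d$ with volume $\operatorname{Vol}_d([0,1]\cdot A)R^d$, so $\#(\ZZ^d\cap[0,R]\cdot A)=\operatorname{Vol}_d([0,1]\cdot A)R^d+o(R^d)$, and likewise $\#\big((a+m\ZZ^d)\cap[0,R]\cdot A\big)=m^{-d}\operatorname{Vol}_d([0,1]\cdot A)R^d+o(R^d)$ for any translated sublattice. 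On the other hand, since $[0,R]\cdot A\subseteq[0,R]\cdot S$, the hypothesis that $D_\ZZ$ has $S$-density $1$ forces $\#\big((\ZZ^d\setminus D_\ZZ)\cap[0,R]\cdot A\big)=o(R^d)$. Hence for any subset $\Gamma'\subseteq\ZZ^d$ whose counting function in $[0,R]\cdot A$ grows like a positive multiple of $R^d$, deleting the points outside $D_\ZZ$ perturbs the normalized $f$-pushforward counting measure by $o(1)$ in total variation, and so leaves its weak limit unchanged. Applying this with $\Gamma'=C_\ZZ$ (which contains at least one full residue class and hence has positive density), it suffices to prove that $f\bmod 1(C_\ZZ)$ is $[0,R]\cdot A$-equidistributed.

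Next I would use that $C_\ZZ$ is cut out by finitely many congruence conditions modulo pairwise coprime integers $p_1,\dots,p_N$: with $m=\prod_i p_i$, the Chinese Remainder Theorem writes $C_\ZZ$ as a finite disjoint union $\bigsqcup_{j=1}^{M}(a_j+m\ZZ^d)$ of residue classes mod $m$. All $M$ cosets have the same asymptotic count in $[0,R]\cdot A$, so if each $f\bmod 1(a_j+m\ZZ^d)$ is $[0,R]\cdot A$-equidistributed, then the corresponding convex combination of normalized pushforward measures (with weights tending to $1/M$) converges to $\lambda_k$, giving the same for $C_\ZZ$. So everything reduces to a single translated sublattice $a+m\ZZ^d$. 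The derivative hypothesis transfers cleanly to the sublattice $m\ZZ^d$: if $v_0\in\ZZ^d$ is the vector supplied by Theorem \ref{myWeyl} for $\ZZ^d$, take $v=mv_0\in m\ZZ^d$; since directional derivatives scale, $\frac{\partial^d f}{\partial v^d}=m^{d}\frac{\partial^d f}{\partial v_0^d}$, so $\big(\frac{\partial^d f}{\partial v^d}\bmod 1\big)_*m_A$ is the image of $\big(\frac{\partial^d f}{\partial v_0^d}\bmod 1\big)_*m_A$ under the map $(S^1)^k\to(S^1)^k$ acting coordinatewise by $x\mapsto m^{d}x$. That map preserves $\lambda_k$, hence sends measures that are $\ll\lambda_k$ to measures with the same property, so the hypothesis of Theorem \ref{myWeyl} holds for $\Gamma=m\ZZ^d$ with this $v$, and the theorem yields that $f\bmod 1(m\ZZ^d)$ is $[0,R]\cdot A$-equidistributed.

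It remains to pass from the sublattice $m\ZZ^d$ to its translate $a+m\ZZ^d$, and this is the one place where the black-box statement of Theorem \ref{myWeyl} is not quite sufficient. Indeed, expanding the indicator function of $a+m\ZZ^d$ in characters of $(\ZZ/m)^d$ only reduces the relevant Weyl sums back to sums over the cosets $a_j+m\ZZ^d$ themselves, so the translation cannot be removed purely formally. The point to make is that the proof of Theorem \ref{myWeyl} in \cite{myNote} is a Weyl-differencing argument in which the base point of the lattice drops out after the first differencing step; since translating by the fixed vector $a$ alters neither the derivative hypothesis nor the asymptotic count in $[0,R]\cdot A$, that argument applies verbatim to $a+m\ZZ^d$ and gives that $f\bmod 1(a+m\ZZ^d)$ is $[0,R]\cdot A$-equidistributed. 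Combined with the two reductions above, this proves the Corollary; the translation-robustness of Theorem \ref{myWeyl} is the only genuinely non-routine ingredient, everything else being lattice-point counting and the CRT.
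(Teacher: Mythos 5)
Your proposal follows the same overall strategy as the paper's proof — a CRT-based reduction of $C_{\ZZ}$ to a single translated sublattice combined with the observation that the complement of $D_{\ZZ}$ contributes negligibly — but you split the final step differently and more carefully. The paper writes $C_{\ZZ}=\{(x_1+l_1Q_1,\dots,x_d+l_dQ_d):l\in\ZZ^d\}$ and then asserts in one line that the composed function $G(y)=F(x_1+y_1Q_1,\dots,x_d+y_dQ_d)$ "also satisfies the conditions of Theorem~\ref{myWeyl} (since $x_i,Q_i$ are integers)", which folds both the rescaling and the translation into a single change of variables and then invokes the theorem as a black box; as you correctly point out, this is not entirely innocent, because $G$ fails to be homogeneous when $x\neq 0$, and Theorem~\ref{myWeyl} is stated for a \emph{lattice} $\Gamma$, not a coset. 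You separate the two issues: the pure sublattice $m\ZZ^d$ is still a lattice, and your observation that $v=mv_0$ gives $\tfrac{\partial^D f}{\partial v^D}=m^{D}\tfrac{\partial^D f}{\partial v_0^D}$, with the map $x\mapsto m^{D}x$ on $(S^1)^k$ preserving $\lambda_k$ and hence preserving absolute continuity, is a clean and fully rigorous verification that the Weyl hypothesis transfers (more explicit than anything in the paper's proof). You then honestly flag that passing to the affine coset $a+m\ZZ^d$ requires opening up the proof of Theorem~\ref{myWeyl} rather than applying its statement formally, and you give the right reason: the translation drops out after one Weyl-differencing step. The paper hides this same gap inside the parenthetical "since $x_i,Q_i$ are integers." So both arguments end up resting on the same translation-robustness of \cite{myNote}, but your write-up localizes and names it while the paper elides it; in that respect your version is the more careful of the two, at the cost of being slightly longer. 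One minor remark: your first reduction (dropping $D_{\ZZ}^c$) is logically the same negligibility estimate the paper performs last, so the order of the two reductions is cosmetic.
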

\begin{proof}
	By taking the average, it suffices to prove the case where $C_{\ZZ}$ is given by precisely one condition modulo each reminder. Let $p_1,\dots,p_k$ be the coprime numbers defining the congruence conditions on the first coordinate of elements in $C_{\ZZ}$ and let $s_1,\dots,s_k$ be the corresponding reminders. Let $P=p_1\cdot\dots\cdot p_k$ and for any $i$, $P_i=P/p_i$. Since the $p_i$'s are coprime, $P_i,p_i$ are coprime. By Bezout's identity there exist integers $M_i,m_i$ such that:
	\begin{equation}
		M_iP_i+m_ip_i=1.
	\end{equation}
	Then $x_1\vcentcolon=\sum_{i=1}^ks_iM_iP_i$ is a solution to the congruence system of conditions on the first entries of elements from $C_{\ZZ}$. Since $p_i$'s are distinct primes, by the Chinese remainder theorem any other solution to the system is given by $x_1+lP$ for some integer $l$. Repeating this argument for any other coordinate we can write:
	\begin{equation}\label{16:04}
		C_{\ZZ}=\{(x_1+l_1Q_1,\dots,x_d+l_dQ_d):(l_1,\dots,l_d)\in \ZZ^d\}
	\end{equation} 
	for some fixed $x_i,Q_i\in \ZZ$. Note that there exists $c>0$ such that for any $R>0$:
	\begin{equation}\label{16:08}
		\abs{C_{\ZZ}\cap [0,R]\cdot S}\geq cR^d.
	\end{equation}
Let $F$ satisfy the conditions of Theorem \ref{myWeyl}. Then the function $G(y_1,\dots,y_d)\vcentcolon=F(x_1+y_1Q_1,\dots,x_d+y_dQ_d)$ also satisfies the conditions of Theorem \ref{myWeyl} (since $x_i,Q_i$ are integers) and therefore by Equation (\ref{16:04}):
	\begin{equation}
			F\operatorname{mod}1(C_{\ZZ})\text{ is }[0,R]\cdot A\text{ equidistributed}.
	\end{equation}
where $A$ is as in Theorem \ref{myWeyl}. By assumption on $D_{\ZZ}$:
\begin{equation}
	\lim_{R\rightarrow\infty}\frac{\abs{(([0,R]\cdot S)\cap \ZZ^d)\setminus D_{\ZZ}}}{R^d}=0
\end{equation}
so by Equation (\ref{16:08}):
\begin{equation}
	\lim_{R\rightarrow\infty}\frac{(([0,R]\cdot S)\cap C_{\ZZ})\setminus D_{\ZZ}}{\abs{C_{\ZZ}\cap [0,R]S}}=0
\end{equation}
which means that $D_{\ZZ}^c$ points are negligible in $C_{\ZZ}$ so in fact:
\begin{equation}
		F\operatorname{mod}1(C_{\ZZ}\cap D_{{\ZZ}})\text{ is }[0,R]\cdot A\text{ equidistributed}
\end{equation}
as desired. 
\end{proof}
\begin{Lemma}\label{BB}
	The subset $\Gamma_d\subset V_{\ZZ}$ of irreducible maximal lattice points in $V_{\ZZ}$ (recall Definition \ref{irma}) is the intersection of a set of $S=\Sigma'_{n-1}\times\Sigma_{r-1}v_0$ density $1$ in $\ZZ^d$ and an infinite congruence set. 
\end{Lemma}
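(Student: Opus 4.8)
The plan is to split the two defining properties of $\Gamma_d$ — irreducibility and maximality — so that maximality alone produces the infinite congruence set, while irreducibility is absorbed into a density-$1$ set by a negligibility estimate of Davenport--Heilbronn/Bhargava type.

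\textbf{Step 1: maximality is an infinite congruence set.} Fix a prime $p$. The local analysis underlying the parametrization of Theorem~\ref{par1} — for $n=3$ the classical Delone--Faddeev form of it, for $n=4,5$ the computations of \cite{Bha1,Bha2} — shows that whether $R_{\ZZ}(v)$ is maximal at $p$ (i.e.\ whether $R_{\ZZ}(v)\otimes\ZZ_p$ is the maximal order of $R_{\ZZ}(v)\otimes\QQ_p$) depends only on $v\bmod p^2$, and is cut out by a $G_{\ZZ}$-invariant union $W_p\subset\ZZ^d$ of residue classes modulo $p^2$. A rank-$n$ ring is maximal iff it is maximal at every prime, so the set $M_{\ZZ}$ of maximal points of $V_{\ZZ}$ equals $\bigcap_p W_p$. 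As the moduli $p^2$ are pairwise coprime, $M_{\ZZ}$ is an infinite congruence set in the sense of Definition~\ref{DeCo}. I will take $C_{\ZZ}:=M_{\ZZ}$, and note $\Gamma_d\subset C_{\ZZ}$.

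\textbf{Step 2: reduction to a negligibility statement.} Set $D_{\ZZ}:=\Gamma_d\cup(V_{\ZZ}\setminus M_{\ZZ})=V_{\ZZ}\setminus(M_{\ZZ}\setminus\Gamma_d)$. Since $\Gamma_d\subset M_{\ZZ}$ one gets $D_{\ZZ}\cap C_{\ZZ}=D_{\ZZ}\cap M_{\ZZ}=\Gamma_d$, so the lemma follows as soon as $D_{\ZZ}$ has $S$-density $1$; equivalently, as soon as the set $M_{\ZZ}\setminus\Gamma_d$ of maximal-but-not-irreducible points has $S$-density $0$, i.e.\ $\#\bigl(([0,R]\cdot S)\cap(M_{\ZZ}\setminus\Gamma_d)\bigr)=o(R^{d})$ as $R\to\infty$.

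\textbf{Step 3: the negligibility estimate.} On the region $[0,R]\cdot S=\{v\in[0,\infty)\cdot S:\abs{\operatorname{Disc}(v)}\le R^{d}\abs{\operatorname{Disc}(v_0)}\}$ (the equality holds because $\operatorname{Disc}$ is $\operatorname{SL}_{n-1}\times\operatorname{SL}_{r-1}$-invariant and homogeneous of degree $d$), the correspondence of Corollary~\ref{1:1} together with its maximality-only variant from Theorem~\ref{par1} identifies $G_{\ZZ}$-orbits of maximal integer points — up to the bounded-index discrepancy between $G_{\ZZ}$ and $\operatorname{SL}_{n-1}(\ZZ)\times\operatorname{SL}_{r-1}(\ZZ)$ — with isomorphism classes of maximal rank-$n$ rings of the prescribed signature and discriminant $\ll R^{d}$. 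Among these the irreducible ones are exactly the rings of integers of $S_n$-number fields; every other maximal rank-$n$ ring is either the ring of integers of a degree-$n$ field whose normal closure has Galois group properly contained in $S_n$, or a maximal order in a decomposable \'etale $\QQ$-algebra of degree $n$. The forms representing such ``bad'' maximal rings degenerate into the cuspidal part of the fundamental domain (a rational linear or quadratic factor, resp.\ a proper splitting type, forces a coordinate to be small after $\operatorname{SL}_{n-1}(\ZZ)\times\operatorname{SL}_{r-1}(\ZZ)$-reduction), and the cuspidal estimates of Bhargava — the very ones powering the counts in \cite{BhaPip}, and going back to Davenport--Heilbronn for $n=3$, together with the rarity of non-$S_n$ fields of degree $\le 5$ — bound the number of them in $[0,R]\cdot S$ by $o(R^{d})$. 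This is exactly the density-$0$ bound needed in Step~2, so $D_{\ZZ}$ has $S$-density $1$ and the lemma follows.

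The only substantial point is Step~3: it is where the arithmetic heart of \cite{BhaPip} (Bhargava's geometry-of-numbers count with its cusp analysis, plus the sparsity of non-$S_n$ fields) has to be invoked, and where care is needed to see that the maximal reducible and non-$S_n$ points really do sit in the negligible, cuspidal part of the counting region rather than its main body. Steps~1 and~2 are, respectively, the standard locality of the maximality condition and a purely formal set-theoretic manipulation.
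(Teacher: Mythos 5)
Your Step 1 matches the paper's treatment of the maximality condition exactly: cite the local nature of maximality at each prime $p$ (a union of residue classes mod a power of $p$) and conclude that the maximal points form an infinite congruence set. The paper simply quotes \cite[Section 5]{BhaPip} for this.

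For the density half, however, you take a needless detour, and it leads you to one misstatement. The paper's decomposition is the obvious one: take $D_{\ZZ}$ to be the set of irreducible points and $C_{\ZZ}$ the maximal points; then $\Gamma_d = D_{\ZZ}\cap C_{\ZZ}$, and \cite[Theorem 4]{BhaPip} is cited directly to say the irreducible points have $S$-density $1$. You instead set $D_{\ZZ} = \Gamma_d \cup (V_{\ZZ}\setminus M_{\ZZ})$ and reduce to showing $M_{\ZZ}\setminus\Gamma_d$ has density $0$. This is logically fine (and in fact follows immediately from the paper's input, since $M_{\ZZ}\setminus\Gamma_d$ sits inside the set of non-irreducible points), but then in Step 3 you try to re-derive this rather than cite it, and in doing so you assert that both the decomposable-algebra points \emph{and} the non-$S_n$-field points ``degenerate into the cuspidal part of the fundamental domain.'' That is not accurate: the forms whose ring lies in a degree-$n$ number field with Galois group a proper subgroup of $S_n$ are irreducible over $\QQ$ and live in the main body of the fundamental domain, not the cusp. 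Their negligibility comes from a separate arithmetic input (for $n=3$, that $C_3$-fields have square discriminant, plus Davenport--Heilbronn; for $n=4,5$, Bhargava's counts of non-$S_n$ fields), not from geometry-of-numbers cusp estimates. You do gesture at ``the rarity of non-$S_n$ fields'' earlier in the paragraph, so the ingredient is nominally present, but the summary sentence folds it incorrectly into the cuspidal analysis.

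In short: correct conclusion and essentially the same mechanism, but Step 2 adds complexity the paper avoids, and Step 3 both re-proves what the paper cites and misattributes the source of negligibility for the non-$S_n$ points. The cleanest version is exactly the paper's: $D_{\ZZ} = \{\text{irreducible}\}$ has density $1$ by \cite[Theorem 4]{BhaPip}, $C_{\ZZ} = \{\text{maximal}\}$ is an infinite congruence set by \cite[Section 5]{BhaPip}, and $\Gamma_d = D_{\ZZ}\cap C_{\ZZ}$.
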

\begin{proof}
By ~\cite[Theorem 4]{BhaPip} and the short discussion following it we know that the irreducible points are of $S$ density 1. By ~\cite[Section 5]{BhaPip}, the maximal points are defined by infinitely many congruence conditions modulo prime powers. 	
\end{proof}
\begin{Corollary}\label{kefel}
	Under the assumptions of Theorem \ref{myWeyl} Equation (\ref{15:23}) holds for $\Gamma_d$ instead of $\ZZ^d$. 
\end{Corollary}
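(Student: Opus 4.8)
The plan is to reduce the statement to Corollary~\ref{CRT} by a truncation-and-limit argument. Lemma~\ref{BB} presents $\Gamma_d$ as $C_{\ZZ}\cap D_{\ZZ}$, where $D_{\ZZ}$ is of $S$-density $1$ in $\ZZ^d$ and $C_{\ZZ}=\bigcap_p C_p$ is an \emph{infinite} congruence set, $C_p$ encoding maximality at the prime $p$ (a condition modulo a power of $p$); since Corollary~\ref{CRT} covers only \emph{finite} congruence sets, I would approximate $C_{\ZZ}$ from above by finite ones. Concretely, enumerate the primes $p_1<p_2<\cdots$, set $C_{\ZZ}^{(N)}\vcentcolon=\bigcap_{j\le N}C_{p_j}$ — a finite congruence set, since only finitely many pairwise coprime moduli occur — and put $E_N\vcentcolon=C_{\ZZ}^{(N)}\cap D_{\ZZ}$, so that $E_1\supseteq E_2\supseteq\cdots$ and $\bigcap_N E_N=\Gamma_d$. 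By Corollary~\ref{CRT}, $f\bmod 1$ is $[0,R]\cdot A$-equidistributed along each $E_N$.

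The analytic heart is a uniform-in-$R$ tail estimate. Setting
\begin{equation*}
\delta_N\vcentcolon=\limsup_{R\to\infty}\frac{\bigl|(C_{\ZZ}^{(N)}\setminus C_{\ZZ})\cap[0,R]\cdot A\bigr|}{R^d},
\end{equation*}
I claim $\delta_N\longrightarrow 0$ as $N\to\infty$. This is precisely the uniformity estimate used in~\cite{BhaPip} (following~\cite{Bha1,Bha2}): the density in $[0,R]\cdot S$ of points failing maximality at a prime $p$ is $O(p^{-2})$, hence summable over $p$, so discarding the maximality conditions beyond $p_N$ changes the counted set by a proportion $o_N(1)$ uniformly in $R$; this localizes from $[0,R]\cdot S$ to $[0,R]\cdot A$ since $A$ is open and Jordan measurable in $S$ with positive surface area. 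I also use the positivity of the density of irreducible maximal points (Bhargava), which gives $|\Gamma_d\cap[0,R]\cdot A|\ge cR^d$ for some $c>0$ and all large $R$, hence also $|E_N\cap[0,R]\cdot A|\ge cR^d$.

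With these in hand the conclusion follows by an $\varepsilon/3$ argument. Fix $\phi\in C((S^1)^k)$ and let $\mu_R^{\Gamma_d},\mu_R^{E_N}$ denote the normalized push-forwards under $f\bmod 1$ of the counting measures of $\Gamma_d$-points, resp.\ $E_N$-points, in $[0,R]\cdot A$. Because $\Gamma_d\subseteq E_N$ and $E_N\setminus\Gamma_d\subseteq C_{\ZZ}^{(N)}\setminus C_{\ZZ}$, separating off the $(E_N\setminus\Gamma_d)$-part of the sum gives
\begin{equation*}
\left|\int\phi\,d\mu_R^{\Gamma_d}-\int\phi\,d\mu_R^{E_N}\right|\ \le\ 2\norm{\phi}_\infty\,\frac{\bigl|(C_{\ZZ}^{(N)}\setminus C_{\ZZ})\cap[0,R]\cdot A\bigr|}{|E_N\cap[0,R]\cdot A|},
\end{equation*}
whose $\limsup_{R\to\infty}$ is at most $2\norm{\phi}_\infty\delta_N/c$. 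Since $\int\phi\,d\mu_R^{E_N}\to\int\phi\,d\lambda_k$ by Corollary~\ref{CRT}, this yields $\limsup_{R\to\infty}\left|\int\phi\,d\mu_R^{\Gamma_d}-\int\phi\,d\lambda_k\right|\le 2\norm{\phi}_\infty\delta_N/c$ for every $N$; letting $N\to\infty$ gives $\int\phi\,d\mu_R^{\Gamma_d}\to\int\phi\,d\lambda_k$, i.e.\ $f\bmod 1(\Gamma_d)$ is $[0,R]\cdot A$-equidistributed, which is Equation~(\ref{15:23}) with $\Gamma_d$ in place of $\ZZ^d$.

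The main obstacle is the uniform tail bound $\delta_N\to 0$, together with the positive-density lower bound on $\Gamma_d$; once these are granted, the rest is bookkeeping and the interchange of the limits $N\to\infty$ and $R\to\infty$. Both ingredients are, however, already contained in the analysis of maximality via congruence conditions modulo prime powers carried out in~\cite{BhaPip,Bha1,Bha2}, so no genuinely new number-theoretic estimate is required — the content of the corollary is exactly the passage from the finite-congruence statement of Corollary~\ref{CRT} to the infinite-congruence set cutting out $\Gamma_d$.
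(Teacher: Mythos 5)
Your proposal is correct and follows essentially the same route as the paper: truncate the infinite congruence set at the primes $p<Y$, apply Corollary~\ref{CRT} to the resulting finite congruence set, and then show that the tail contribution from primes $p\ge Y$ is negligible using the $O(R^d/p^2)$ bound from \cite[Lemma 11]{BhaPip} together with the positive-density estimate $\abs{[0,R]S\cap\Gamma_d}=\alpha R^d\operatorname{Vol}([0,1]S)+o(R^d)$. The only cosmetic difference is that you phrase the final step via test functions and a triangle-inequality/$\limsup$ bookkeeping, whereas the paper compares counting ratios $N(\Gamma_d^{<Y},W,R)/N(\Gamma_d^{<Y},R)$ directly; the two formulations are equivalent.
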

\begin{proof}
	Write $\Gamma_d=C_d\cap D_d$ where $D_d$ is a set of $S$ density 1 and $C_d$ is some infinite congruence set. For any prime $p$ let $U_p$ be the subset of $C_d$ where the congruence conditions involve only $p$. By ~\cite[Lemma 11]{BhaPip}:
	\begin{equation}
		\abs{\left([0,R]S\cap \ZZ^d\right)\setminus U_p}=O(R^d/p^2)
	\end{equation}
and by ~\cite[Section 5]{BhaPip} there exists $\alpha>0$ such that:
\begin{equation}
	\abs{[0,R]S\cap \Gamma_d}=\alpha R^d\operatorname{Vol}([0,1]S)+o(R^d).
\end{equation}
Let $\eps>0,R>0$ and let $Y_0$ to be later determined. Then:
\begin{equation}\label{skorka}
	\frac{\abs{[0,R]S\cap \left(\Gamma_d\setminus \cup_{p<Y_0}U_p\right)}}{\abs{[0,R]S\cap\left(\Gamma_d\setminus \cup_{p>Y_0}U_p\right)}}\leq \frac{\sum_{p\geq Y_0}O(R^d/p^2)}{\abs{[0,R]S\cap \Gamma_d}}=\frac{\sum_{p\geq Y_0}O(R^d/p^2)}{\alpha R^d\operatorname{Vol}([0,1]S)+o(R^d)}\leq \eps
\end{equation}
for any $Y_0$ large enough. Let $k,f$ be as in Theorem \ref{myWeyl} and let $W\subset \RR^k/\ZZ^k$ be an open subset. Denote for any $R>0$ and $Y>0$:
\begin{equation}
	N(\Gamma_d^{<Y},A,R)=\abs{[0,R]S\cap (f\operatorname{mod}1)^{-1}(W)\cap \left(\cup_{p<Y}U_p\cap \Gamma_d\right)}
\end{equation}
and similarly define $N(\Gamma_d^{>Y},A,R)$ and $N(\Gamma_d^{<Y},R)=N(\Gamma_d^{<Y},\RR^k/\ZZ^k,R)$,$N(\Gamma_d^{>Y},R)=N(\Gamma_d^{>Y},\RR^k/\ZZ^k,R)$. Then to prove the Corollary we need to estimate for any open Jordan measurable $W\subset \RR^k/\ZZ^k$:
\begin{equation}
	\frac{N(\Gamma_d^{<Y},W,R)+N(\Gamma_d^{>Y},W,R)}{N(\Gamma_d^{<Y},R)+N(\Gamma_d^{>Y},R)}=\frac{\frac{N(\Gamma_d^{<Y},W,R)}{N(\Gamma_d^{<Y},R)}+\frac{N(\Gamma_d^{>Y},W,R)}{N(\Gamma_d^{<Y},R)}}{1+\frac{N(\Gamma_d^{>Y},R)}{N(\Gamma_d^{<Y},R)}}=\frac{N(\Gamma_d^{<Y},W,R)}{N(\Gamma_d^{<Y},R)}+\eps
\end{equation}
for all $R,Y$ large enough, by Equation (\ref{skorka}). Since $\Gamma_d^{<Y}$ is defined by finitely many congruence conditions, Corollary \ref{CRT} implies that $\frac{N(\Gamma_d^{<Y},W,R)}{N(\Gamma_d^{<Y},R)}\rightarrow \operatorname{Vol}_{k}(W)$ as $R\rightarrow \infty$. Taking $\eps\rightarrow 0$ we deduce:
\begin{equation}
	\frac{N(\Gamma_d^{<Y},W,R)+N(\Gamma_d^{>Y},W,R)}{N(\Gamma_d^{<Y},R)+N(\Gamma_d^{>Y},R)}\rightarrow \operatorname{Vol}_{k}(W)
\end{equation}
and the claim of Theorem \ref{myWeyl} is proved. 
\end{proof}
Our main result of this Section will be stated using the function $f_n^E$ guaranteed by Lemma \ref{structure}. For convenience and clarity, we define it again:
\begin{definition}\label{funcDef}
	For any hyperplane $E=u^{\perp}\subset \RR^n$, $v_0\in V_{\RR}^{(0)}$ and $\Sigma_{n-1}'$ be a fundamental domain for the action of $\operatorname{SL}_{n-1}(\ZZ)$ on $\operatorname{SL}_{n-1}(\RR)$ which we will use throughout this Section, let $f^E_n$ denote the function corresponding to the subspace $E$ under Lemma \ref{structure} and let $(w_i)_{i=1}^{n-1}$ be the fixed basis of $V_0$ also defined in Lemma \ref{structure}. Explicitly, we denote:
	\begin{equation*}
		f_n^E(p)=\frac{1}{\inner{u}{\overline{1}}}\sum_{i=1}^{n-1}\inner{u}{\overline \alpha_i(p)}w_i+q_0
	\end{equation*}
for any $p\in \RR_+\cdot\Sigma_{n-1}\times \Sigma_{r-1} v_0$. Moreover, denote:
\begin{equation}
	\Lambda_w=\operatorname{span}_{\ZZ}\{w_1,\dots,w_{n-1}\}.
\end{equation}
\end{definition}
The following Lemma summarizes the properties of the function $f_n^E$ which are relevant for Section \ref{PF}.
\begin{Lemma}\label{calc}
	Let $n=3$ ($n=4$,$n=5$), let $u\in \RR^n$ satisfying $u\notin (\RR\cdot \overline 1)\cup \overline 1^{\perp}$ and let $v_0\in V_{\RR}^{(0)}$. Then $f_n^{E=u^{\perp}}$ is homogeneous of degree $D=1$ ($D=2$,$D=4$) and for any Jordan measurable bounded open subset $U$ of $S=\Sigma_{n-1}'\times \Sigma_{r-1}v_0$ it holds that $f_n^E,U,S$ satisfy the conclusion of Theorem \ref{myWeyl} for $k=n-1$. Namely:
\begin{equation}\label{claim}
	f_n^E\operatorname{mod}\Lambda_w(\Gamma_d)\text{ is $[0,R]\cdot U$-equidistributed}
\end{equation}
where $\Gamma_d\subset V_{\ZZ}$ is the set of irreducible maximal lattice points in $V_{\ZZ}$ (recall Definition \ref{irma}).
\end{Lemma}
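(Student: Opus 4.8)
The plan is to deduce the equidistribution~(\ref{claim}) formally from Theorem~\ref{myWeyl} and Corollary~\ref{kefel}, so that the real content is the verification of the two analytic hypotheses of Theorem~\ref{myWeyl} for the triple $(f_n^E,U,S)$. The first is homogeneity. By Definition~\ref{funcDef}, $f_n^E(p)=\tfrac{1}{\inner{u}{\overline 1}}\sum_{i=1}^{n-1}\inner{u}{\overline\alpha_i(p)}\,w_i+q_0$ with $(w_i)$ and $q_0$ fixed, so it is enough to see each $\overline\alpha_i$ is homogeneous of degree $D$ (with $D=1,2,4$ for $n=3,4,5$). This I would read off the multiplication tables~\ref{MT3}, \ref{MT4}, \ref{MT5} and the construction of the $\overline\alpha_i$ (recall~\ref{basCons}): rescaling $v\mapsto\lambda v$ for $\lambda>0$ multiplies every structure constant of the ring $R(v)$ by $\lambda^{D}$ --- they are homogeneous of degree $D$ in the coefficients of $v$ by \cite{FadBook,Bha1,Bha2} --- while fixing $1\in R(v)$, hence multiplies the distinguished basis $\alpha_i(v)$ of $R(v)/\ZZ$, and so its real image $\overline\alpha_i(v)$, by $\lambda^{D}$. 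The additive constant $q_0$ is killed by any directional derivative, so it plays no role below (replace $f_n^E$ by $f_n^E-q_0$ if exact homogeneity is wanted). As $S$ is a level surface of the discriminant, which is smooth and homogeneous of degree $d=4,12,40$, this places us in the setting of Theorem~\ref{myWeyl} with $k=n-1$.

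The second, substantial hypothesis is the existence of $v\in V_{\ZZ}$ with $(g_v\operatorname{mod}\Lambda_w)_*m_U\ll\lambda_{n-1}$, where $g_v\vcentcolon=\partial_v^{D}f_n^E$; after conjugating by a fixed $L\in\operatorname{GL}_{n-1}(\RR)$ with $L\Lambda_w=\ZZ^{n-1}$ (which preserves homogeneity and the Weyl condition) we may assume $\Lambda_w=\ZZ^{n-1}$. By the homogeneity above, $g_v$ is homogeneous of degree $0$, and it is real-analytic on the open cone $\RR_+\cdot S\subset V_{\RR}^{(0)}$, since on $V_{\RR}^{(0)}$ the ring $R(v)$ is split \'etale with simple spectrum, so the $\overline\alpha_i$ --- continuous, hence by the implicit function theorem real-analytic, solutions of the multiplication tables --- are real-analytic there. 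I claim it then suffices to produce a single real pair $(p_\ast,v_\ast)\in(\RR_+\cdot S)\times\RR^{d}$ with $dg_{v_\ast}(p_\ast)$ of full rank $n-1$. Indeed, analyticity then forces $\{\operatorname{rank}dg_{v_\ast}<n-1\}$ to be a proper analytic subset of the connected cone, hence $m_U$-null for every Jordan measurable $U\subset S$, and on its complement the coarea formula exhibits an $L^{1}$-density of $(g_{v_\ast}\operatorname{mod}\Lambda_w)_*m_U$ against $\lambda_{n-1}$; moreover $\{v\in\RR^{d}:\ dg_v\text{ attains rank }n-1\text{ somewhere on }\RR_+\cdot S\}$ is open --- it is the image under the second projection of the open locus where some $(n-1)\times(n-1)$ minor of $dg_v(p)$ is nonzero --- and $g_{Nv}=N^{D}g_v$ has the same rank everywhere, so a rational, then an integral, direction is reached by approximation and clearing denominators.

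The remaining non-degeneracy statement --- that for some $(p_\ast,v_\ast)$ the $n-1$ functions $p\mapsto\inner{u}{\partial_v^{D}\overline\alpha_i(p)}$, $i=1,\dots,n-1$ (the components of $g_v$ in the basis $(w_i)$), are functionally independent on $S$ near $p_\ast$ --- is the only point I expect to be genuinely hard. The hypothesis $E\neq V_0$, i.e.\ $u\notin\RR\overline 1$, is exactly what makes it possible: for $u\in\RR\overline 1$ one has $\inner{u}{\overline\alpha_i(v)}\in\RR\cdot\operatorname{Tr}_{R(v)/\RR}\alpha_i(v)$, a polynomial of degree $D$ in $v$ taking integral values on $V_{\ZZ}$, so $\partial_v^{D}f_n^{V_0}$ is a constant lattice vector and equidistribution fails, consistently with the qualitatively different behaviour of Theorem~\ref{torsions2g}; whereas for $u\notin\RR\overline 1$ the function $v\mapsto\inner{u}{\overline\alpha_i(v)}$ is a genuinely non-polynomial, $u$-weighted partial trace of $\alpha_i(v)$, and one must show it is \emph{maximally} non-degenerate. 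To do this I would choose $v_\ast$ and $p_\ast$ so that along the line $p_\ast+\RR v_\ast$ the split ring $R(\cdot)$ has all $n$ real components moving independently to first order --- there is ample room for this since $v\mapsto(\overline\alpha_1(v),\dots,\overline\alpha_{n-1}(v))$ is an immersion on $V_{\RR}^{(0)}$ (the coordinatewise products recover the multiplication table, hence $v$) and $\dim S=d-1\ge n-1$ --- then differentiate the explicit formulas for the $\overline\alpha_i$ from \cite{FadBook,Bha1,Bha2} $D$ times in the direction $v_\ast$ and verify directly that the resulting $(n-1)\times(d-1)$ Jacobian has a nonvanishing $(n-1)\times(n-1)$ minor at $p_\ast$, e.g.\ by choosing suitably the coordinate $(n-1)$-plane and invoking the cross-product identities already used in the proof of Lemma~\ref{structure}. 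Carrying this out once for each of $n=3,4,5$ finishes the verification; Theorem~\ref{myWeyl} then yields $[0,R]\cdot U$-equidistribution of $f_n^E\operatorname{mod}\Lambda_w(\ZZ^{d})$, and Corollary~\ref{kefel} upgrades $\ZZ^{d}$ to $\Gamma_d$, which is~(\ref{claim}).
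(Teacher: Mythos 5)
Your reduction to Theorem~\ref{myWeyl} and Corollary~\ref{kefel}, the homogeneity observation, and the use of real analyticity to propagate a single non-degenerate point to a full-measure non-degenerate locus are all in the spirit of the paper's argument (the paper runs the same idea in contrapositive form, via Lemmas~\ref{lesles}, \ref{reA}, and~\ref{posM}). But the proposal stops exactly where the lemma's actual content begins. The paper's proof, after the common setup, spends the bulk of its length on three separate, explicit algebraic computations: assuming the Weyl condition fails for every $v\in\Gamma_d$, it deduces that $f_0=\inner{f_n^E}{v_\ZZ}$ must be a degree-$D$ polynomial in the $d$ coordinates of $V_\ZZ$, substitutes the explicit parametrizations $a_i(gv_0)$ and $\inner{P_{V_0}\overline\alpha_i(gv_0)}{u}$ from~\cite{PipThes} (cross-product formula for $n=3$, the $gA_0g^t$ identity and matrix-entry bookkeeping for $n=4$, the $g_4,g_5$ bilinear action for $n=5$), and then compares monomial coefficients to reach a contradiction with $u\notin\RR\overline 1\cup\overline 1^\perp$. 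Your sentence ``Carrying this out once for each of $n=3,4,5$ finishes the verification'' is precisely this omitted core; without producing, for each $n$, a concrete $(p_\ast,v_\ast)$ with a nonvanishing $(n-1)\times(n-1)$ minor of the Jacobian of $p\mapsto\partial_v^D f_n^E(p)$ (or, equivalently, running the paper's coefficient comparison), the proof is incomplete. The preparatory claims that are left vague (``there is ample room,'' ``invoking the cross-product identities,'' ``verify directly'') cannot be taken as established.

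A smaller imprecision worth noting: your heuristic that rescaling $v\mapsto\lambda v$ multiplies \emph{every} structure constant of $R(v)$ by $\lambda^D$ is not literally correct (for $n=3$, the constant terms $-ad,-ac,-bd$ scale by $\lambda^2$ while the linear coefficients $b,-a,d,-c$ scale by $\lambda$); nonetheless the basis vectors $\overline\alpha_i$ do satisfy $\overline\alpha_i(\lambda v)=\lambda^D\overline\alpha_i(v)$, which is what is actually needed. The conclusion stands, but the justification as written would not survive scrutiny.
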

Since there is an overlap in the arguments of the proof for $n=3,4,5$, we write this overlap once and then continue to prove each case separately. Let $u\in \RR^n$ such that $u\notin(\RR\cdot \overline 1)\cup \overline 1^{\perp}$ and fix $v_0\in V^{(0)}_{\RR}$. Let $U\subset S=\Sigma_{n-1}'\times \Sigma_{r-1}v_0$ be open and Jordan measurable subset.  
	It holds (e.g. by Weyl's criterion for uniform distribution in ~\cite{Weyl}) that (\ref{claim}) is equivalent to:
	\begin{equation}\label{claim2}
		\text{For any }v_{\ZZ}\in \Lambda_w\text{ }\inner{f_n^E}{v_{\ZZ}}\operatorname{mod}1(\Gamma_d)\text{ is $[0,R]\cdot U$-equidistributed}.
	\end{equation}
Suppose for sake of contradiction that $v_{\ZZ}\in \Lambda_w$ is such that (\ref{claim2}) does not hold. Denote $f_0=\inner{f_n^E}{v_{\ZZ}}$. Note that by Definition \ref{funcDef} there exist real constants $(\gamma_1,\dots,\gamma_{n-1})\neq 0$ such that:
\begin{equation}
	f_0(p)=\sum_{i=1}^{n-1}\gamma_i\inner{\overline\alpha_i(p)}{u}
\end{equation}
for every $p\in \RR_+\cdot U\subset \RR^d$. By Lemma \ref{BB}, $\Gamma_d$ is the intersection of an infinite congruence set and a set of $S$ density $1$ in $\ZZ^d$. Therefore, by Corollary \ref{kefel} we may assume that for any $v\in \Gamma_d$, the function $f_0$, $S,A$ and $v$ do not satisfy the conditions of Theorem \ref{myWeyl}. Indeed, otherwise the Corollary will ensure that (\ref{claim2}) holds. Explicitly, it means that for any $v\in \Gamma_d$ the measure $(\frac{\partial^{D(n)} f_0}{\partial v^{D(n)}}\operatorname{mod}1)_*m_A$ is not absolutely continuous w.r.t $\lambda$ (recall Definition \ref{ED}). Lemma \ref{lesles} implies that there exists a subset $B=B_v\subset U$ of positive $m_U$ measure such that $\nabla\frac{\partial^{D(n)} f_0}{\partial v^{D(n)}}=0$ on $B$. By Subsection \ref{basCons} the smooth functions $\alpha_1,\dots,\alpha_{n-1}:\RR_+\cdot U\rightarrow \RR^n$ satisfy the multiplication table \ref{MT3} when $n=3$, \ref{MT4} when $n=4$ and \ref{MT5} when $n=5$. Therefore, by Lemma \ref{reA} for the open set $V=\RR_+\cdot U$ and $\alpha_1,\dots,\alpha_{n-1}$ we deduce that $\alpha_1,\dots,\alpha_{n-1}$ are real analytic in $V$. Since linear combinations and derivatives of real analytic functions is again real analytic, this shows that $f_0$ and $\nabla \frac{\partial^{D(n)} f_0}{\partial v^{D(n)}}$ are real analytic. By Fubini's Theorem $[1/2,3/2]\cdot B$ has positive $d$-dimensional Lebesgue measure so Lemma \ref{posM} for $\nabla \frac{\partial^{D(n)} f_0}{\partial v^{D(n)}}$ and $[1/2,3/2]\cdot B\subset V$ implies that: 
\begin{equation}\label{17:06}
	 \nabla \frac{\partial^{D(n)} f_0}{\partial v^{D(n)}}=0\text{ on }V.
\end{equation}
Indeed, since $\nabla \frac{\partial^{D(n)} f_0}{\partial v^{D(n)}}$ is homogeneous of degree $-1$, $\nabla \frac{\partial^{D(n)} f_0}{\partial v^{D(n)}}\mid_{B}=0$ implies $\nabla \frac{\partial^{D(n)} f_0}{\partial v^{D(n)}}\mid_{[1/2,3/2]\cdot B}=0$.
\begin{proof}[Proof of Lemma \ref{calc}, case $n=3$]
We continue from Equation (\ref{17:06}). Since $v\in \Gamma_d$ was arbitrary and $\Gamma_d$ contains a basis for $\RR^d$, we can write:
\begin{equation}
	f_0(a_1,\dots,a_4)=\sum_{i=1}^4\eta_ia_i
\end{equation}
for some constants $\eta_i\in \RR;i=1,\dots,4$ and for every $(a_1,\dots,a_4)\in \RR_+\cdot U$. In other words, for every $g=\begin{pmatrix}
	\ r& s \\
	t & u \\
	\end{pmatrix}\in U$:
	\begin{equation*}
		\gamma_1\inner{\overline\alpha_1(gv_0)}{u}+\gamma_2\inner{\overline\alpha_2(gv_0)}{u}=\sum_{i=1}^4\eta_ia_i(gv_0)
	\end{equation*}
	where for $v\in V_{\RR}$ we denote $v=(a_1(v),\dots,a_4(v))$. Since $\inner{\overline \alpha_1(gv_0)}{\overline 1},\inner{\overline \alpha_2(gv_0)}{\overline 1}$ are proportional to $a_2(gv_0),a_3(gv_0)$ respectively (see ~\cite[p.35]{PipThes}), we get (possibly for different constants $\gamma_i,\eta_i$):
	\begin{equation}\label{AA}
		\gamma_1\inner{P_{V_0}\overline\alpha_1(gv_0)}{u}+\gamma_2\inner{P_{V_0}\overline\alpha_2(gv_0)}{u}=\sum_{i=1}^4\eta_ia_i(gv_0).
	\end{equation}
	Write $\kappa_i=\inner{P_{V_0}\overline \alpha_i(v_0)}{u};i=1,2$, then as follows from Subsection \ref{basCons}:
	\begin{equation}\label{B1}
		\inner{P_{V_0}\overline \alpha_1(gv_0)}{u}=(ru-st)(\kappa_1r+\kappa_2s),
	\end{equation}
	\begin{equation}\label{B2}
		\inner{P_{V_0}\overline \alpha_2(gv_0)}{u}=(ru-st)(\kappa_1t+\kappa_2u).
	\end{equation}
	By the first equation at the bottom of ~\cite[p.38]{PipThes} we get:
	\begin{equation*}
		\sum_{i=1}^4\eta_ia_i(gv_0)=\eta_1(a_0r^3+b_0r^2s+c_0rs^2 +d_0s^3)+\eta_2(3a_0r^2t+2b_0rst+c_0s^2t+b_0r^2u+2c_0rsu+3d_0s^2u)
	\end{equation*}
	\begin{equation*}
		+\eta_3(3a_0rt^2 + 2b_0rtu + b_0st^2 + c_0ru^2 + 2c_0stu + 3d_0su^2)+\eta_4(a_0t^3 + b_0t^2u + c_0tu^2 + d_0u^3)
	\end{equation*}
	Assume $a_0\neq 0$. Then since neither of $r^3,r^2t,rt^2,t^3$ appear in Equations (\ref{B1}),(\ref{B2}), Equation (\ref{AA}) implies  that $\eta_i=0;i=1,\dots,4$, a contradiction. Similarly, assuming that $d_0\neq 0$, since $s^3,s^2u,su^2,u^3$ do not appear in Equations (\ref{B1}),(\ref{B2}) we get the same contradiction. Since $\operatorname{Disc}(v_0)=b_0^2c_0^2-4a_0c_0^3-4b_0^3d_0-27a_0^2d_0^2 + 18a_0b_0c_0d_0\neq 0$ by assumption, we find that $b_0,d_0\neq 0,a_0=d_0=0$. The coefficients of $rst,r^2u$ in the right hand side of (\ref{AA}) are $2\eta_2b_0,\eta_2b_0$ respectively. On the left hand side they are $-\gamma_1\kappa_1,\gamma_1\kappa_1$ respectively. This implies that $-\gamma_1\kappa_1=2\gamma_1\kappa_1$ thus $\kappa_1\gamma_1=0$. Similarly, we show that $\kappa_2\gamma_2=0$. Note that by our choice of $u$, $\kappa_i=\inner{P_{V_0}\overline \alpha_i(v_0)}{u}\neq 0;i=1,2$ so that $\gamma_1=\gamma_2=0$, a contradiction. 
\end{proof}
\begin{proof}[Proof of Lemma \ref{calc}, case $n=4$]
We continue from Equation (\ref{17:06}). Since $v\in \Gamma_d$ was arbitrary and $\Gamma_d$ contains a basis for $\RR^d$, we can write:
\begin{equation}\label{17:24}
	f_0(a_1,\dots,a_4)=\sum_{i,j=1}^{6}\eta_{i,j}a_i(gv_0)a_j(gv_0)
\end{equation} 
where $a_i(v);i=1,\dots,12$ are the coefficients of an element $v\in V_{\RR}$. Fix $v_0=(A_0,B_0)\in V^{(0)}_{\RR}$ where $A_0,B_0$ is a pair of symmetric $3\times3$ matrices (see ~\cite[p.46]{PipThes}). Write:

\begin{equation}\label{symM}
h\vcentcolon=A_0=\begin{pmatrix}
	a_0 & d_0/2 &f_0/2 \\
	d_0/2 & b_0 & e_0/2\\
	f_0/2 &e_0/2 & c_0
	\end{pmatrix}.
\end{equation}
For a general $v\in V_{\RR}^{(0)}$ write $v=(A(v),B(v))$ where $A(v),B(v)$ is a pair of symmetric $3\times 3$ matrices (see ~\cite[p.46]{PipThes}). Equation (\ref{17:24}) is equivalent to:
	\begin{equation*}
		\gamma_1\inner{\overline\alpha_1(gv_0)}{u}+\gamma_2\inner{\overline\alpha_2(gv_0)}{u}+\gamma_3\inner{\overline\alpha_3(gv_0)}{u}=\sum_{i,j=1}^{6}\eta_{i,j}a_i(gv_0)a_j(gv_0).
	\end{equation*}
Since $\inner{\overline \alpha_i(gv_0)}{\overline 1},i=1,2,3$ can each be written as some linear combination of $a_i(gv_0)a_j(gv_0);i,j=1,\dots,6$ (see ~\cite[p. 41-42]{PipThes}), we get (possibly for different constants):
\begin{equation}\label{C0}
	\gamma_1\inner{P_{V_0}\overline\alpha_1(gv_0)}{u}+\gamma_2\inner{P_{V_0}\overline\alpha_2(gv_0)}{u}+\gamma_3\inner{P_{V_0}\overline\alpha_3(gv_0)}{u}=\sum_{i,j=1}^{6}\eta_{i,j}a_i(gv_0)a_j(gv_0).
\end{equation}
Write $\kappa_i=\inner{P_{V_0}\overline \alpha_i(v_0)}{u};i=1,2,3$ and $g=\begin{pmatrix}
	-& v_1 &- \\
	- & v_2 & -\\
	- &v_3 &-
	\end{pmatrix}$ then it follows from Subsection \ref{basCons}:
	\begin{equation}\label{C1}
		\inner{P_{V_0}\overline \alpha_1(gv_0)}{u}=\det(g)\inner{v_1}{\overline \kappa},
	\end{equation}
	\begin{equation}\label{C2}
		\inner{P_{V_0}\overline \alpha_2(gv_0)}{u}=\det(g)\inner{v_2}{\overline \kappa},
	\end{equation}
	\begin{equation}\label{C3}
		\inner{P_{V_0}\overline \alpha_3(gv_0)}{u}=\det(g)\inner{v_3}{\overline \kappa}.
	\end{equation}
	where $\overline \kappa=(\kappa_1,\kappa_2,\kappa_3)$. If $g\in U$, by ~\cite[p.46]{PipThes}:
	\begin{equation}
		A(gv_0)=gA_0g^t=\begin{pmatrix}
	-& v_1 &- \\
	- & v_2 & -\\
	- &v_3 &-
	\end{pmatrix}
	\begin{pmatrix}
	a_0 & d_0/2 &f_0/2 \\
	d_0/2 & b_0 & e_0/2\\
	f_0/2 &e_0/2 & c_0
	\end{pmatrix}
	\begin{pmatrix}
	\textemdash & \mid &\mid \\
	v_1 & v_2 & v_3\\
	\mid &\mid &\mid
	\end{pmatrix}=\begin{pmatrix}
	-& v_1 &- \\
	- & v_2 & -\\
	- &v_3 &-
	\end{pmatrix}\begin{pmatrix}
	\inner{h_1}{v_1} & \inner{h_1}{v_2} &\inner{h_1}{v_3} \\
	\inner{h_2}{v_1} & \inner{h_2}{v_2} & \inner{h_2}{v_3}\\
	\inner{h_3}{v_1} &\inner{h_3}{v_2} & \inner{h_3}{v_3}
	\end{pmatrix}.
	\end{equation}
A general entry of the matrix above will be denoted by $(i,j)\vcentcolon=\inner{v_i}{(\inner{h_1}{v_j},\inner{h_2}{v_j},\inner{h_3}{v_j})}$.	Therefore, by Equations (\ref{C0}),(\ref{C1}),(\ref{C2}) and (\ref{C3}) we get:
\begin{equation}\label{H}
	\det(g)(\gamma_1\inner{v_1}{\overline \kappa}+\gamma_2\inner{v_2}{\overline \kappa}+\gamma_3\inner{v_3}{\overline \kappa})=\sum_{i_1,j_1,i_2,j_2=1,\dots,6}\eta_{i_1,i_2,j_1,j_2}(i_1,j_1)(i_2,j_2). 
\end{equation}
Write $v_i=(v_{i1},v_{i2},v_{i3})$ for every $i=1,2,3$. Assume without loss of generality that $\kappa_1\neq0$ and $\gamma_1\neq0$ (it will be clear from the argument why this assumption preserves the generality). Consider the coefficients $c_1,c_2$ of $v_{11}^2v_{22}v_{33}$ and $v_{11}^2v_{23}v_{32}$ respectively on the left hand side of Equation (\ref{H}). By definition of $\det(g)$ and Equation (\ref{H}) it holds that $c_1=-c_2=\kappa_1\gamma_1\neq 0$.  However, direct calculation shows that the coefficient of $v_{11}^2v_{22}v_{33}$ on the right hand side of Equation $(\ref{H})$ is $\eta_{1,1,2,3}h_{11}h_{23}+\eta_{1,3,1,2}h_{13}h_{12}$. The same calculation shows that $\eta_{1,1,2,3}h_{11}h_{32}+\eta_{1,3,1,2}h_{12}h_{13}$ is the coefficient of $v_{11}^2v_{23}v_{32}$. However, $h$ is symmetric (recall Equation (\ref{symM})) so these two expressions are equal. Under the assumption that Equation (\ref{H}) poses we got that on the one hand $c_1=-c_2$ and on the other $c_1=c_2$. The inevitable conclusion that $c_1=c_2=0$ contradicts our assumption that $\kappa_1,\gamma_1\neq 0$. Indeed, the choice $\kappa_1,\gamma_1\neq 0$ did not affect the generality: we would have otherwise onsidered the coefficients of $v_{i,j}^2v_{kl}v_{mp},v_{i,j}^2v_{kp}v_{ml}$ for some other appropriate $i,j,k,l,m,p\in \{1,2,3\}$.
\end{proof}	
\begin{proof}[Proof of Lemma \ref{calc}, case $n=5$]
We continue from Equation (\ref{17:06}). Denote:
\begin{equation}
	I=\{(i,j,k):i,j=1,\dots,5,k=1,\dots,4\}
\end{equation}
and for $i_0=(i,j,k)\in I$ denote:
\begin{equation}
A(i_0)=A^{(k)}_{i,j}.	
\end{equation}
Since $v\in \ZZ^d$ was arbitrary, we can write:
\begin{equation}\label{2:45}
	f_0(A^{(1)},\dots,A^{(4)})=\sum_{i_1,\dots,i_5\in I}\eta_{i_1,\dots,i_5}A(i_1)A(i_2)A(i_3)A(i_4)A(i_5)
\end{equation} 
Write $g_4^t=\begin{pmatrix}
	\ \mid& \mid &\mid &\mid \\
	v_1 & v_2 & v_3 & v_4\\
	\mid &\mid &\mid &\mid\end{pmatrix},g_5^t=\begin{pmatrix}
	\ \mid& \mid &\mid &\mid &\mid \\
	u_1 & u_2 & u_3 & u_4& u_5\\
	\mid &\mid &\mid &\mid&\mid\end{pmatrix}$ and $g=(g_4,g_5)$. By ~\cite[p. 52]{PipThes} we know that:
\begin{equation}\label{2:46}
	g\cdot v_0=\left(g_4\colf{g_5A^{(1)}g_5^{t}}{g_5A^{(2)}g_5^{t}}{g_5A^{(3)}g_5^{t}}{g_5A^{(4)}g_5^{t}}\right)
\end{equation}
where we abuse the notation $g=(g_4,g_5)$. Now, given $u$ as in the conditions of the Lemma we write by \ref{basCons}:
\begin{equation}\label{22:10}
	\inner{P_{V_0}(\alpha_i(gv_0)}{u}=\det(g_4)\left(\sum_{j=1}^4v_{ij}\inner{P_{V_0}(\alpha_j(v_0))}{u}\right),
\end{equation}
\begin{equation}\label{22:11}
	\inner{P_{\RR\cdot \overline 1}\alpha_i(gv_0)}{u}=\frac{1}{n}\inner{\alpha_i(gv_0)}{\overline 1}\inner{u}{\overline 1}.
\end{equation}
Equation (\ref{2:45}) and the above two equations imply:  
\begin{equation*}
	\sum_{i_1,\dots,i_5\in I}\eta_{i_1,\dots,i_5}\left(A(i_1)A(i_2)A(i_3)A(i_4)A(i_5)\right)(gv_0)=f_0(A^{(1)}(gv_0),A^{(2)}(gv_0),A^{(3)}(gv_0),A^{(4)}(gv_0))
\end{equation*}
\begin{equation}\label{22:38}
	=\inner{\overline{\alpha}_i(gv_0)}{1}+\frac{\det{(g_4)}}{\inner{u}{1}}\sum_{j=1}^4v_{ij}\inner{P_{V_0}(\alpha_j(v_0))}{u}+q_0
\end{equation}
and so:
\begin{equation}
	\sum_{i_1,\dots,i_5\in I}\eta_{i_1,\dots,i_5}\left(A(i_1)A(i_2)A(i_3)A(i_4)A(i_5)\right)(gv_0)-\inner{\overline{\alpha}_i(gv_0)}{1}=\frac{\det{(g_4)}}{\inner{u}{1}}\sum_{j=1}^4v_{ij}\inner{P_{V_0}(\alpha_j(v_0))}{u}+q_0
\end{equation}
which, by Equation (\ref{2:46}) is a contradiction, the left hand side of the above equation has non-trivial dependence in $g_5$ while the right hand side doesn't.
\end{proof}
\section{Proofs}\label{PF}
In this Section we prove Theorems \ref{torsion2} and \ref{unif3} (in this order) by proving their equivalent formulations, namely Theorems \ref{unif3g} and \ref{torsions2g} repectively. Let $n=3,4,5$ and fix fundamental domains $\Sigma_{n-1}',\Sigma_{r-1}$ for the actions of $\operatorname{SL}_{n-1}(\ZZ)$ and $\operatorname{SL}_{r-1}(\ZZ)$ on $\operatorname{SL}_{n-1}(\RR)$ and $\operatorname{SL}_{n-1}(\RR)$ respectively. Since we use it in the proof, we remind the reader of Definition \ref{funcDef} by repeating it once again:
\begin{definition*}
For any hyperplane $E=u^{\perp}\subset \RR^n$, let $f^E_n$ denote the function corresponding to the subspace $E$ under Lemma \ref{structure}. Explicitly, we denote:
	\begin{equation*}
		f_n^E(p)=\frac{1}{\inner{u}{\overline{1}}}\sum_{i=1}^{n-1}\inner{u}{\overline \alpha_i(p)}w_i+q_0
	\end{equation*}
for any $p\in \RR_+\cdot(\Sigma_{n-1}\times \Sigma_{r-1}) v_0$ (recall that in Definition \ref{funcDef} $(w_i)_{i=1}^{n-1}$ was some fixed basis of $V_0$).
\end{definition*}
Before proving the main Theorems we make a reduction that will serve us both in the proof of Theorem \ref{unif3g}: First, for convenience of this discussion, given a choice of orderings namely $\Pi:\mathcal F^{(0)}_{\infty}\rightarrow S_n$ (recall Subsection \ref{Not} for the definition of $\mathcal F^{(0)}_{\infty}$), write $\operatorname{ED}(\Pi)$ if (recall Subsection \ref{NLN} for the definition of $\Gamma^{\Pi(\FF)}_E$):
\begin{equation*}
	\Gamma^{\Pi(\FF)}_E\text{ is equidistributed in }Y_{n-1}\text{ when }\FF\in  \mathcal F_n \text{ are ordered by discriminant}.
\end{equation*}
For $n=3,4,5$, if there exists $\Pi:\mathcal F_{\infty}^{(0)}\rightarrow S_n$ such that $\operatorname{ED}(\Pi)$ and for any $\pi\in S_n$, $\operatorname{ED}(\pi\circ\Pi)$ where $(\pi\circ\Pi)(\FF)\vcentcolon=\pi\circ (\Pi(\FF))$ then averaging the counting measures for each $\pi$ proves the equidistribution statement in Theorem \ref{unif3g}. Since we find such $\Pi$ using Lemma \ref{structure}, the extra property $\operatorname{ED}(\pi\circ\Pi)$ for any $\pi\in S_n$ will hold by the last part of this Lemma with the same argument. We continue with this reduction in mind.
\begin{proof}[Proof of Theorem \ref{unif3g}]\label{PF}
Let $\overline 1\neq u\in \RR^n$ such that $u\notin V_0$ and denote $E=u^{\perp}$. Fix $i=1,\dots,\floor{n/2}$, pick $v_0\in V_{\RR}^{(i)}$ and let $\Pi:\mathcal F_X^{(0)}\rightarrow S_n$ and $\Sigma_{n-1}$ be the function and fundamental domain (for the action of $\operatorname{stab}_{\operatorname{SL}_n(\RR)}(\Lambda_w)$ on $\operatorname{SL}_{n-1}(\RR)$) which correspond to $E$, $v_0$ and $i$ by Lemma \ref{structure}. It suffices to prove the claim for any signature separately because then the claim follows by taking the average. Since our discussion (particularly Theorem \ref{structure}) holds for any $i$, we assume without the loss of generality that $i=0$ and we are in the totally real case. Abbreviate $f_E=f_n^E$. For an open bounded and Jordan measurable subset $S\subset \Sigma_{n-1}$ and $V=I_1w_1+\dots+I_{n-1}w_{n-1}$ where $I_i\subset [0,1]$ are open intervals, let $A=S\times_{\Sigma_{n-1}}V\subset Y_{n-1}$ be a $\Lambda_w$-$\Sigma_{n-1}$-basic subset (recall Definition \ref{BS}). From here on, we abbreviate "open bounded and Jordan measurable" to OBJOM.\\
For every $\eps>0$ let $\Sigma_{r-1}^{\eps}\subset \Sigma_{r-1}$ be OBJOM such that:
\begin{equation}\label{18:03}
	\operatorname{Vol}\left([0,1]\cdot (S\times\Sigma_{r-1})v_0\right)\geq \operatorname{Vol}\left([0,1]\cdot (S\times\Sigma_{r-1}^{\eps})v_0\right)\geq (1-\eps)\operatorname{Vol}\left([0,1]\cdot (S\times\Sigma_{r-1})v_0\right).
\end{equation}
For every $T>0$ denote:
\begin{equation}
	S_T^{\eps}=[0,T]\cdot \left(S\times \Sigma_{r-1}^{\eps}v\right)v_0,S_T=[0,T]\cdot \left(S\times \Sigma_{r-1}\right)v_0;
\end{equation}
\begin{equation}
	S_{T,\ZZ}^{\eps}=S_T^{\eps}\cap \{\text{irreducible maximal points}\},S_{T,\ZZ}=S_T\cap \{\text{irreducible maximal points}\};
\end{equation}
\begin{equation}
	L_T=[0,T]\cdot \left(\Sigma_{n-1}\times \Sigma_{r-1}\right)v_0,L_{T,\ZZ}=L_T\cap \{\text{irreducible maximal points}\};
\end{equation}
\begin{equation}
	p_T^{\eps}(V)=\frac{\abs{S_{T,\ZZ}^{\eps}\cap f_E^{-1}(V)}}{\abs{S_{T,\ZZ}^{\eps}}}.
\end{equation}
Repeating the proof of Theorem 5 in ~\cite{BhaPip} word by word but replacing the shape function (denoted as $q$ in ~\cite[p. 7]{BhaPip}) with the lattice function $l:V_{\RR}^{(0)}\rightarrow X_{n-1}$ defined for every $v=t(v)(g_1(v),g_2(v))v_0\in \RR_+\cdot \Sigma_{n-1}\times\Sigma_{r-1}v_0$ by:
\begin{equation}
	l(v)=g_0g_1(v)^tg_0^{-1}\Lambda_w,
\end{equation}
we get:
\begin{equation}\label{17:45}
	\abs{S_{T,\ZZ}}=\operatorname{Vol}(S_T)+o(T^d),\abs{L_{T,\ZZ}}=\operatorname{Vol}(L_T)+o(T^d).
\end{equation}
Given a subset $B\subset Y_{n-1}$ and $T>0$ denote $\mu_T(B)$ to be the proportion of totally real number fields $\FF$ with discriminant less than $T$ that satisfy $\Gamma_{E}^{\pi}(\FF)\in B$ (recall Subsection \ref{NLN}). Invoke Lemma \ref{structure} to deduce that for any $T>0$:
\begin{equation}\label{ha}
	\mu_T(A)=\frac{\abs{S_{T,\ZZ}\cap f_{E}^{-1}(V)}}{\abs{L_{T,\ZZ}}}=\frac{\abs{S_{T,\ZZ}\cap f_{E}^{-1}(V)}}{\abs{S^{\eps}_{T,\ZZ}\cap f_{E}^{-1}(V)}}\cdot\frac{\abs{S^{\eps}_{T,\ZZ}\cap f_{E}^{-1}(V)}}{\abs{S_{T,\ZZ}^{\eps}}}\cdot\frac{\abs{S_{T,\ZZ}^{\eps}}}{\abs{S_{T,\ZZ}}}\cdot\frac{\abs{S_{T,\ZZ}}}{\abs{L_{T,\ZZ}}}.
\end{equation}
We estimate every element in the above product separately. First, we deal with $	p_T^{\eps}(V)=\frac{\abs{S_{T,\ZZ}^{\eps}\cap f_E^{-1}(V)}}{\abs{S_{T,\ZZ}^{\eps}}}$. By application of Lemma \ref{calc} for the OBJOM set $\left(S\times \Sigma_{r-1}^{\eps}\right)v_0$:
\begin{equation}\label{18:14}
	p_T^{\eps}(V)\rightarrow \operatorname{Vol}_{n-1}(V)\text{ as }T\rightarrow \infty.
\end{equation} 
By Equation (\ref{17:45}):
\begin{equation}
	\frac{\abs{S_{T,\ZZ}}}{\abs{L_{T,\ZZ}}}\rightarrow \frac{\operatorname{Vol}(S_1)}{\operatorname{Vol}(L_1)}\text{ as }T\rightarrow \infty
\end{equation}
and by the calculation in the proof of ~\cite[Proposition 12]{BhaPip} applied for $l^{-1}(S)\cap \{\operatorname{Disc}(v)\leq 1\}$ instead of $\mathcal R_{1,W}$:
\begin{equation}\label{ONE}
	\frac{\abs{S_{T,\ZZ}}}{\abs{L_{T,\ZZ}}}\rightarrow m_{X_{n-1}}(S)\text{ as }T\rightarrow \infty
\end{equation}
(recall Subsection \ref{NLN} for definition of $m_{X_{n-1}}$, the Haar measure on the space of lattices).
Since $S_{1}^{\eps}$ is bounded we can use ~\cite[Lemma 6]{BhaPip} on it to deduce that for any $T>0$:
\begin{equation}
	\abs{S_{T,\ZZ}^{\eps}}=\operatorname{Vol}(S_{T}^{\eps})+o(T^d)
\end{equation}
which implies, by Equations (\ref{17:45}) and (\ref{18:03}) that:
\begin{equation}\label{TWO}
	\frac{\abs{S_{T,\ZZ}}}{\abs{S^{\eps}_{T,\ZZ}}}=	1-\eps+o_T(1).
\end{equation}
Consequently, by dividing the space into its disjoint intersections with $f^{-1}_E(V),f^{-1}_E(V^c)$:
\begin{equation*}
	\left(\abs{S_{T,\ZZ}\cap f_E^{-1}(V)}-\abs{S^{\eps}_{T,\ZZ}\cap f_E^{-1}(V)}\right)+\left(\abs{S_{T,\ZZ}\cap f_E^{-1}(V^c)}-\abs{S^{\eps}_{T,\ZZ}\cap f_E^{-1}(V^c)}\right)\leq \eps \abs{S_{T,\ZZ}^{\eps}}+\abs{S_{T,\ZZ}^{\eps}}o_T(1)
\end{equation*}
which implies, since both terms in parenthesis in the above equation are non-negative:
\begin{equation}
	\frac{\abs{S_{T,\ZZ}\cap f_E^{-1}(V)}}{\abs{S^{\eps}_{T,\ZZ}\cap f_E^{-1}(V)}}-1\leq \eps\frac{\abs{S_{T,\ZZ}^{\eps}}}{\abs{S_{T,\ZZ}^{\eps}\cap f_E^{-1}(V)}}+\frac{\abs{S_{T,\ZZ}^{\eps}}}{\abs{S_{T,\ZZ}^{\eps}\cap f_E^{-1}(V)}}o_T\left(1\right)=\eps p^{\eps}_T(V)+p_T^{\eps}(V)o_T(1)
\end{equation}
but now using (\ref{18:14}):
\begin{equation}\label{THREE}
	\frac{\abs{S_{T,\ZZ}\cap f_E^{-1}(V)}}{\abs{S^{\eps}_{T,\ZZ}\cap f_E^{-1}(V)}}=1+\eps\operatorname{Vol}(V)(1+o_T(1)).
\end{equation}
Using (\ref{ONE}),(\ref{TWO}),(\ref{18:14}) and (\ref{THREE}) and taking $\eps\rightarrow 0$ we deduce:
\begin{equation}\label{14:15}
	\mu_T(A)\rightarrow m_{X_{n-1}}(S)\operatorname{Vol}(V)=m_{Y_{n-1}}(A)\text{ as }T\rightarrow\infty
\end{equation}
where the last equality holds by Fubini's Theorem and Definition \ref{BS}. To deduce the above limit for any other Jordan measurable subset $A\subset Y_{n-1}$ invoke Theorem \ref{finish}. Note that this proves the desired claim, which is about the weak convergence of $\mu_T$ to $\mu_{Y_{n-1}}$. 
\end{proof}

\begin{proof}[Proof of Theorem \ref{torsions2g}]
	The reader may find it useful to recall the definition of the corresponding grid which appears in Subsection \ref{NLN}. By Lemma \ref{structure}, there exists a fixed basis of $V_0$ given by $w=(w_j)_{j=1}^{n-1}$, a function $\Pi:\mathcal F_{\infty}^{(0)}\rightarrow S_n$ (recall Subsection \ref{Not}) and $g_0\in \operatorname{SL}_{n-1}(\RR)$ such that:
	\begin{equation}
		\Sigma_{n-1}\vcentcolon=g_0^t\Sigma_{n-1}'g_0^{-t}
	\end{equation}
satisfies that for any $\Lambda_w$-$\Sigma_{n-1}$-basic subset $S\times_{\Sigma_{n-1}} U\subset Y_{n-1}$:
	\begin{equation}\label{0:51}
		\mathcal F^{(0)}_X\cap (\Gamma_{V_0}^{\Pi})^{-1}(S\times_{\Sigma_{n-1}} U)\xleftrightarrow{1:1}
	\end{equation}
	\begin{equation*}
		\{\text{irreducible, maximal points inside }[0,X] (g_0^{-t}S^{-1}g_0^{t}\times \Sigma_{r-1})v_0\}\cap ((f_w+q_0)\operatorname{mod}\Lambda_{w})^{-1}(U_w)
	\end{equation*}
	where $q_0\in V_0$ is some fixed vector (recall Definition \ref{UW} for definition of $U_w$). In this case:
	\begin{equation}\label{0:50}
		f_w(p)=\frac{1}{\inner{\overline 1}{\overline 1}}\sum_{i=1}^{n-1}\inner{\overline 1}{\overline{\alpha}_i(g_pv_0)}w_i=\frac{1}{n}\sum_{i=1}^{n-1}\inner{\overline 1}{\overline{\alpha}_i(g_pv_0)}w_i.
	\end{equation}
By the discussion carried in ~\cite[p.35 for $n=3$,p.41 for $n=4$ and p.50 for $n=5$]{PipThes} for every $i=1,\dots,n-1$:
\begin{equation}\label{0:49}
	\inner{\overline 1}{\overline{\alpha}_i(g_pv_0)}\in \ZZ\text{ if }p\in V_{\ZZ}\text{ and moreover }\inner{\overline 1}{\overline{\alpha}_i(g_pv_0)}\text{ is a polynomial in the coefficients of }p. 
\end{equation}
Let $\Lambda_w=\operatorname{span}_{\ZZ}\{w_1,\dots,w_{n-1}\}$ and denote:
\begin{equation}
	D=\{\sum_{i=1}^{n-1}\frac{j_i}{n}w_i\operatorname{mod}\Lambda_w:j_i=1,\dots,n \}.
\end{equation}
By Equations (\ref{0:51})-(\ref{0:49}) for every totally real $S_n$-number field $\FF$ of degree $n$:
\begin{equation}\label{15:24}
	\Gamma_{V_0}^{\Pi}(\FF)\in \Sigma_{n-1}\times_{\Sigma_{n-1}}D.
\end{equation}
Let $m_{\Sigma_{n-1}}$ be the restriction of $m_{\operatorname{SL}_{n-1}(\RR)}$ to $\Sigma_{n-1}$ and let $k$ be a divisor of $n$. Recall Definition \ref{torDef} to note that if $\zeta\in D$ is such that $k\zeta\in \Lambda_{w}$ and $(k-1)\zeta\notin \Lambda_{w}$ then:
\begin{equation}
	\on{SL}_{n-1}(\RR)\times_{\Sigma_{n-1}}\{\zeta\}=Y_n(k).
\end{equation}
Denote also
\begin{equation}
	F_{\zeta}:=\overline{\Sigma_{n-1}\times_{\Sigma_{n-1}}\{\zeta\}}\subset Y_n(k).
\end{equation}
By Equation (\ref{0:49}) $f_i(p)\vcentcolon=\inner{\overline 1}{\overline{\alpha}_i(g_pv_0)}$ is an integer polynomial in the coefficients of $p$. Therefore there exists a congruence set $C(\zeta)$ with congruences modulo $k$ such that:
\begin{equation}\label{key}
	f_w(p)=\zeta\iff p\in C(\zeta)\iff \Gamma_{V_0}^{\Pi}(p)\in F_{\zeta}.
\end{equation}
Let $\mu_T$ be defined for any $T>0$ as in the proof of Theorem \ref{unif3}. To prove Theorem \ref{torsions2g} it will suffice to find $\alpha_{\zeta}>0$ such that for any OBJOM $S\subset \Sigma_{n-1}$, $A_{\zeta,S}=S\times_{\Sigma_{n-1}}\{\zeta\}$ satisfies:
\begin{equation}
	\mu_T(A_{\zeta,S})\rightarrow \alpha_{\zeta}m_{X_{n-1}}(S)=m_{Y_n(k)}\mid_{F_{\zeta}}(A_{\zeta,S})\text{ as }T\rightarrow \infty
\end{equation}
by (\ref{key}). Let $\alpha_{\zeta}$ be the density of $C(\zeta)$ inside $V_{\ZZ}$, namely:
\begin{equation}
	\alpha_{\zeta}=\lim_{T\rightarrow \infty}\frac{\abs{C(\zeta)\cap V_{\ZZ}\cap B_T(0)}}{T^d}
\end{equation}
which also satisfies since $S_T^{\eps}$ is an OBJOM:
\begin{equation}\label{15:336}
	\frac{\abs{S^{\eps}_{T,\ZZ}\cap f_{w}^{-1}(\{\zeta\})}}{\abs{S_{T,\ZZ}^{\eps}}}=\frac{\abs{S^{\eps}_{T,\ZZ}\cap C(\zeta)}}{\abs{S_{T,\ZZ}^{\eps}}}\rightarrow \alpha_{\zeta}\text{ as }T\rightarrow \infty
.\end{equation}
 Then precisely as in Equation (\ref{ha}) it holds that:
\begin{equation}\label{ha'}
	\mu_T(A_{\zeta,S})=\frac{\abs{S_{T,\ZZ}\cap f_{w}^{-1}(\{\zeta\})}}{\abs{L_{T,\ZZ}}}=\frac{\abs{S_{T,\ZZ}\cap f_{w}^{-1}(\{\zeta\})}}{\abs{S^{\eps}_{T,\ZZ}\cap f_{w}^{-1}(\{\zeta\})}}\cdot\frac{\abs{S^{\eps}_{T,\ZZ}\cap f_{w}^{-1}(\{\zeta\})}}{\abs{S_{T,\ZZ}^{\eps}}}\cdot\frac{\abs{S_{T,\ZZ}^{\eps}}}{\abs{S_{T,\ZZ}}}\cdot\frac{\abs{S_{T,\ZZ}}}{\abs{L_{T,\ZZ}}}.
\end{equation}
where the second term converges, by Equation (\ref{15:336}), to $\alpha_{\zeta}$ as $T\rightarrow \infty$ and the remaining terms are the same as in the proof of Theorem \ref{unif3}. Therefore $\mu_T(A_{\zeta,S})$ converges to $\alpha_{\zeta}m_{X_{n-1}}(S)$ as $T\rightarrow \infty$. 
We therefore choose:
\begin{equation}
	\{F^{(k)}_{\ell}\}=\{F_{\zeta}:\zeta\in D\text{ is such that }k\zeta\in \Lambda_w,(k-1)\zeta\notin\Lambda_w\}
\end{equation}
and conclude by Equations (\ref{15:24}) and (\ref{key}).
\end{proof}
\appendix
\section{Calculations}\label{A}
\subsection{Multiplication Tables}\label{muls}
For convenience we include explicitly the multiplication tables that define Bhargava's correspondence in ~\cite{Bha1,Bha2}. They are copied from ~\cite[p. 35,39-40]{PipThes}.
\subsubsection{Case $n=3$}
Let $v\in V_{\RR}$ be given by $v=(a,b,c,d)$. The multiplication table corresponding to $v$ is:
\begin{equation*}
	\alpha_1\alpha_2=-ad
\end{equation*}
\begin{equation}\label{MT3}
	\alpha_1^2=-ac+b\alpha_1-a\alpha_2
\end{equation}
\begin{equation*}
	\alpha_2^2=-bd+d\alpha_1-c\alpha_2.
\end{equation*}
\subsubsection{Case $n=4$}
Let $v\in V_{\RR}$ be given by $v=(a_{11}x^2 + a_{22}y^2 + a_{33}z^2 + a_{12}xy + a_{13}xz + a_{23}yz, b_{11}x^2 + b_{22}y^2 + b_{33}z^2 + b_{12}xy + b_{13}xz + b_{23}yz)$. The multiplication table corresponding to $v$ is:
\begin{equation*}
	\alpha_1^2=h_{11}+g_{11}\alpha_1+f_{11}\alpha_2+e_{11}\alpha_3
\end{equation*}
\begin{equation*}
	\alpha_2^2=h_{22}+g_{22}\alpha_1+f_{22}\alpha_2+e_{22}\alpha_3
\end{equation*}
\begin{equation}\label{MT4}
	\alpha_3^2=h_{33}+g_{33}\alpha_1+f_{33}\alpha_2+e_{33}\alpha_3
\end{equation}
\begin{equation*}
	\alpha_1\alpha_2=h_{12}+e_{12}\alpha_3
\end{equation*}
\begin{equation*}
	\alpha_1\alpha_3=h_{13}+f_{13}\alpha_2+e_{13}\alpha_3
\end{equation*}
\begin{equation*}
	\alpha_2\alpha_3=h_{23}+g_{23}\alpha_1+f_{23}\alpha_2+e_{23}\alpha_3.
\end{equation*}
\subsubsection{Case $n=5$}
Let $v\in V_{\RR}$ be given by $v=(A_1,\dots,A_4)$ where $A_i$ are $5\times 5$ skew symmetric matrices. The multiplication table corresponding to $v$ is:
\begin{equation}\label{MT5}
	\alpha_i\alpha_j=c_{ij}^0+\sum_{k=1}^4c^k_{ij}\alpha_k
\end{equation}
for some constants $c_{ij}^k$ which are some polynomials in the entries of $(A_1,\dots,A_4)$ and appearing in ~\cite[p. 51]{PipThes}.
\subsection{Construction of a Basis}\label{basCons}
Let $n=3,4,5$ and fix $v_0\in V_{\RR}^{(0)}$. Let $\overline\alpha_1(v_0),\dots,\overline\alpha_n(v_0)\in \RR^n,\overline \alpha_n(v_0)=\overline 1$ be a set of vectors such that $\operatorname{MT}_{v_0}(\overline{\alpha}_1,\dots,\overline{\alpha}_{n-1})$ with the pointwise product on $\RR^n$.

For any $g=\begin{pmatrix}
	\mid\ & \cdots &\mid \\
	\overline{g_1} & \cdots & \overline{g_{n-1}}\\
	\mid & \cdots & \mid
	\end{pmatrix}$ and $h\in \operatorname{SL}_{r-1}(\RR)$, write $v=t(g,h)v_0;t\in \RR$ in a unique way (it is indeed unique by ~\cite[Section 3]{BhaPip}) and for any $i=1,\dots,n-1$, denote:
\begin{equation}\label{nirit1}
	\overline\alpha_{i,V_0}(v)=t\inner{\overline{g_i}}{P_{V_0}\overline\alpha_i(v_0)}.
\end{equation}
We know that there exist vectors $\overline{\alpha}_1(v),\dots,\overline{\alpha}_{n-1}(v)\in \RR^n$ such that $\operatorname{MT}_v(\overline{\alpha}_1(v),\dots,\overline{\alpha}_{n-1}(v))$ and:
\begin{equation}\label{nirit2}
	P_{V_0}\overline{\alpha}_i(v)=\overline\alpha_{i,V_0}(v)\text{ for }i=1,\dots,n-1
\end{equation}
using the last sentence of ~\cite[Theorem 3]{BhaPip} and as is used implicitly in ~\cite[p. 39 for $n=3$, p.44 for $n=4$ and p.52 for $n=5$]{PipThes}. Therefore by Equations (\ref{nirit1}),(\ref{nirit2}):
\begin{equation}\label{14:49}
	\operatorname{span}_{\ZZ}\{P_{V_0}\overline \alpha_1(v),\dots,P_{V_0}\overline \alpha_{n-1}(v)\}=g_0g^tg_0^{-1}	\operatorname{span}_{\ZZ}\{P_{V_0}\overline \alpha_1(v_0),\dots,P_{V_0}\overline \alpha_{n-1}(v_0)\}
\end{equation}
where:
\begin{equation*}
	g_0=\begin{pmatrix}
	\mid\ & \cdots &\mid \\
	P_{V_0}\overline \alpha_1(v_0) & \cdots & P_{V_0}\overline \alpha_{n-1}(v_0)\\
	\mid & \cdots & \mid
	\end{pmatrix}.
\end{equation*}
For any $i=1,\dots,n-1$ write:
\begin{equation}\label{veryLate}
	\overline{\alpha}_i(v)=P_{V_0}\overline{\alpha}_i(v)+\operatorname{trace}(\overline{\alpha}_i(v))\overline 1.
\end{equation}
Since the action of $\RR\cdot \operatorname{SL}_{n-1}\times\operatorname{SL}_{r-1}(\RR)$ on $V_{\RR}$ is smooth and since $v\mapsto \operatorname{trace}(\overline{\alpha}_i(v))$ is smooth by direct computation of the trace in the multiplication table (see \ref{muls}), we deduce that the functions $v\mapsto\overline{\alpha}_i(v)$ are smooth for $i=1,\dots,n-1$.
\subsection{Additional Lemmas}
In this part of the Appendix we state several supporting results for Subsection \ref{ver}.
\begin{Lemma}\label{lesles}
	Let $d\in \NN$. Let $S$ be a level surface of some smooth homogeneous function (as in Theorem \ref{myWeyl}) on $\RR^d$ and let $A\subset S$ be bounded, open and Jordan measurable in $S$. Let $F:A\rightarrow \RR$ be smooth. If $\nabla F(x)\neq 0$ for $m_A$ a.e. $x\in A$ (where $m_A$ is the surface measure on $A$) then $F_*m_A\ll\lambda_{\RR}$ where $\lambda_{\RR}$ is the Lebesgue measure on $\RR$.
\end{Lemma}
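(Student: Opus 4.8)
The plan is to verify the defining property of absolute continuity directly: for an arbitrary Lebesgue-null set $N\subset\RR$ I will show $m_A\big(F^{-1}(N)\big)=0$. Enlarging $N$ to a Borel null set containing it, we may assume $N$ is Borel, so that $F^{-1}(N)$ is a Borel subset of $A$ and everything below is measurable. The starting observation is that $S$ is a genuine smooth embedded hypersurface: writing $S=\{G=t\}$ for the smooth homogeneous function $G$ of Theorem~\ref{myWeyl} and using Euler's identity $\inner{\nabla G(x)}{x}=(\deg G)\,G(x)$, which on $S$ equals $(\deg G)\,t\neq0$ (the homogeneity degree being nonzero), we see that $\nabla G$ has no zero on $S$; hence $S$ is a regular level set and therefore a smooth $(d-1)$-dimensional submanifold of $\RR^d$. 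Consequently the open subset $A\subset S$ is itself a smooth $(d-1)$-manifold, and the surface measure $m_A$ is the Riemannian volume of the induced metric; in particular, in every smooth chart of $A$ the measure $m_A$ has a smooth, strictly positive density against $(d-1)$-dimensional Lebesgue measure. Throughout, $\nabla F$ denotes the gradient of $F$ regarded as a function on the manifold $A$.

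Next I would localize on the open set $A^{*}=\{x\in A:\nabla F(x)\neq0\}$, which carries full $m_A$-measure by hypothesis, so that it suffices to prove $m_A\big(F^{-1}(N)\cap A^{*}\big)=0$. On $A^{*}$ the smooth map $F\colon A\to\RR$ is a submersion, so for each $x_0\in A^{*}$ the implicit function theorem furnishes an open neighbourhood $W\subset A^{*}$ of $x_0$ and a diffeomorphism $\psi\colon W\xrightarrow{\ \sim\ }(a,b)\times\Omega$, with $\Omega\subset\RR^{d-2}$ open, such that $F|_{W}$ is the first coordinate of $\psi$. Under $\psi$ the set $F^{-1}(N)\cap W$ corresponds to $\big(N\cap(a,b)\big)\times\Omega$, which is $\lambda_{d-1}$-null by Fubini since $N$ is $\lambda_1$-null. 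As $\psi$ is a diffeomorphism and $m_A$ has a smooth (hence locally bounded) density against Lebesgue measure in the $\psi$-chart, the pushforward $\psi_{*}\big(m_A|_{W}\big)$ is absolutely continuous with respect to $\lambda_{d-1}$, whence $m_A\big(F^{-1}(N)\cap W\big)=\psi_{*}\big(m_A|_{W}\big)\big(\,(N\cap(a,b))\times\Omega\,\big)=0$.

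Finally, $A$ is second countable, so $A^{*}$ is covered by countably many such neighbourhoods $W_1,W_2,\dots$, and countable subadditivity gives $m_A\big(F^{-1}(N)\cap A^{*}\big)\le\sum_{j}m_A\big(F^{-1}(N)\cap W_j\big)=0$; combined with $m_A(A\setminus A^{*})=0$ this yields $m_A\big(F^{-1}(N)\big)=0$, i.e. $F_{*}m_A\ll\lambda_{\RR}$. The only points needing care are the two structural remarks used above — that $A$ is a bona fide smooth manifold, so that \emph{submersion} and \emph{surface measure} carry their intended meaning (secured by Euler's identity), and that in a submersion chart the surface measure is comparable to Lebesgue measure, so that preimages of null sets stay null — and neither goes beyond the smoothness already in the hypotheses. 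Alternatively, one can bypass charts altogether by invoking the coarea formula $\int_{A}\mathbf 1_{F^{-1}(N)}\,\abs{\nabla F}\,dm_A=\int_{N}\mathcal H^{d-2}\big(F^{-1}(t)\cap A\big)\,dt=0$ on each piece $\{\abs{\nabla F}\ge 1/k\}$ and letting $k\to\infty$, but the submersion argument keeps the prerequisites minimal.
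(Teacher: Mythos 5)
Your proof is correct, but it takes a genuinely different route from the paper. The paper first handles the case where $\norm{\nabla F}$ is uniformly bounded below on $A$: it invokes (by compactness of $\overline A$) a ball estimate of the form $\lambda_{\RR}\bigl(F(B^A_{\eps}(x))\bigr)\geq c\,\norm{\nabla F(x)}\,m_A\bigl(B^A_{\eps}(x)\bigr)$ and then a ``standard estimation argument'' to conclude absolute continuity there; it then exhausts the critical set $E=\{\nabla F=0\}$ by open sets $E_n$ with $m_A(E_n)\to 0$ and passes to the limit in $\mu_n=\mathbbm{1}_{E_n^{c}}\,m_A$. You instead work directly on $A^{*}=\{\nabla F\neq 0\}$ with the submersion (rank) theorem: near each point of $A^{*}$ you straighten $F$ into a coordinate projection, note that the preimage of a null set becomes a product $(N\cap(a,b))\times\Omega$ which is $\lambda_{d-1}$-null by Fubini, observe that $m_A$ is comparable to Lebesgue in such a chart, and finish by second countability and countable subadditivity; the coarea-formula alternative you sketch is a third equivalent packaging. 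The two proofs buy different things: the paper's two-step ``compactness then exhaustion'' structure keeps the argument measure-theoretic throughout but leaves the final covering estimate somewhat implicit (and the concluding appeal to ``weak'' convergence is really setwise convergence, since the $\mu_n$ increase to $m_A$), whereas your chart-based argument makes the absolute continuity manifest and bypasses the exhaustion entirely, at the price of invoking the submersion theorem and the regularity of $S$ (which you correctly secure via Euler's identity and the fact that the homogeneity degree is nonzero). Your explicit remark that $\nabla F$ means the tangential gradient of $F$ as a map on the manifold $A$ removes an ambiguity the paper leaves implicit.
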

\begin{proof}
	We first prove the claim when there exists $c_0>0$ such that $\norm{\nabla F}>c_0$ on $A$ (and therefore on $\overline{A}$ because $F$ is smooth). In this case, by compactness of $\overline A$ we can find $c>0$ and $\eps_0>0$ such that for any $\eps<\eps_0$ it holds that $\lambda_{\RR}(F(B_{\eps}^A(x)))\geq c\norm{\nabla F(x)}m_A(B_{\eps}^A(x))\geq cc_0m_A(B_{\eps}^A(x))$ where $B_{\eps}^A$ is an (e.g. Euclidean) open ball of radius $\eps$ in $A$. By standard estimation argument, it will hold that $F_*m_A\ll \lambda_{\RR}$. Next we prove the Lemma's claim in the general context. Define the closed subset of $A$ by $E=\{x\in A:\nabla F(x)=0\}$ and let $E_n\supset E$ be a sequence of open subsets of $A$ such that $m_A(E_n)\rightarrow 0$ as $n\rightarrow \infty$. For any $n$, denote $\mu_n=\mathbbm{1}_{E_n^c}m_A$ to be a sequence of measures on $A$ such that $\mu_n\rightarrow m_A$ weakly as $n\rightarrow \infty$. Let $S\subset \RR$ be a $\lambda_{\RR}$-null set. By compactness of $\overline A\cap E_n^c$, there exists $c_n>0$ such that $\norm{\nabla F}\geq c_n$ on $\overline A\cap E_n^c$ so by what we proved before: $F_*\mu_n\ll\lambda_{\RR}$ for every $n$. Since $F_*\mu_n\rightarrow F_*m_{A}$ weakly as $n\rightarrow \infty$ we deduce $F_*m_A\ll\lambda_{\RR}$ as desired. 
\end{proof}
\begin{Lemma}\label{reA}
	Let $n=3,4,5$, $d=4,12,40$, respectively and let $U\subset V^{(0)}_{\RR}$ be an open subset. Assume that $\alpha_1,\dots,\alpha_{n-1}:U\rightarrow \RR^n$ are smooth and satisfy (\ref{MT3}) if $n=3$, (\ref{MT4}) if $n=4$ and (\ref{MT5}) if $n=5$. Then $\alpha_1,\dots,\alpha_{n-1}$ are real analytic on $U$.
\end{Lemma}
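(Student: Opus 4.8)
The plan is to exploit that the product on $\RR^n$ occurring in the multiplication tables is the \emph{coordinatewise} product, so the vector identities $\operatorname{MT}_v$ split into $n$ scalar polynomial systems — one per coordinate — and then to apply the real-analytic implicit function theorem to each. Write $\alpha_i(v)=\big((\alpha_i(v))_1,\dots,(\alpha_i(v))_n\big)$ and, for a fixed coordinate $\ell$, set $\xi^{(\ell)}(v)=\big((\alpha_1(v))_\ell,\dots,(\alpha_{n-1}(v))_\ell\big)\in\RR^{n-1}$, which is smooth in $v$ since the $\alpha_i$ are. Reading $\operatorname{MT}_v$ in the $\ell$-th coordinate gives a polynomial map $\Phi\colon V_{\RR}\times\RR^{n-1}\to\RR^N$, $\Phi_{ij}(v,\xi)=\xi_i\xi_j-\sum_{k=1}^{n-1}c^{k}_{ij}(v)\xi_k-c^{0}_{ij}(v)$, whose structure constants $c^{k}_{ij}(v)$ are polynomials in $v$ (explicit in Appendix~\ref{muls} for $n=3,4$, polynomial by \ref{MT5} for $n=5$), and $\Phi(v,\xi^{(\ell)}(v))\equiv 0$ on $U$. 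Since a map is real analytic iff all its components are, it suffices to prove that each $\xi^{(\ell)}$ is real analytic on $U$.

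The key point — and the step I expect to be the main obstacle — is that $D_\xi\Phi(v,\cdot)$ has rank $n-1$ at $\xi_0:=\xi^{(\ell)}(v)$ for every $v\in U$; I would prove this by identifying its kernel with derivations. A vector $\eta$ lies in the kernel iff $(\xi_0)_j\eta_i+(\xi_0)_i\eta_j=\sum_k c^{k}_{ij}(v)\eta_k$ for all $i,j$. Let $A_v:=\RR[X_1,\dots,X_{n-1}]/\big(X_iX_j-\sum_k c^{k}_{ij}(v)X_k-c^{0}_{ij}(v)\big)$, a commutative $\RR$-algebra; applying the relations repeatedly reduces every monomial to the span of $1,X_1,\dots,X_{n-1}$, so $\dim_{\RR}A_v\le n$, while $\operatorname{MT}_v$ is the multiplication table of the rank-$n$ algebra $R(v)$ attached to $v$ (Theorem~\ref{par2}) in the basis $1,\overline\alpha_1(v),\dots,\overline\alpha_{n-1}(v)$, giving a surjection $A_v\twoheadrightarrow R(v)$; hence $A_v\cong R(v)$, and since $v\in U\subset V_{\RR}^{(0)}$ we get $A_v\cong\RR^n$ by Remark~\ref{orbs}. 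In particular $1,X_1,\dots,X_{n-1}$ is a basis of $A_v$, so $\chi\colon A_v\to\RR$, $X_i\mapsto(\xi_0)_i$, is a well-defined character (as $\Phi(v,\xi_0)=0$), and $\delta\colon A_v\to\RR$, $\delta(1)=0$, $\delta(X_i)=\eta_i$, is a well-defined $\chi$-derivation: the only nontrivial Leibniz check is $\delta(X_iX_j)=\sum_k c^{k}_{ij}(v)\eta_k$ versus $\chi(X_i)\delta(X_j)+\chi(X_j)\delta(X_i)=(\xi_0)_i\eta_j+(\xi_0)_j\eta_i$, which is exactly the kernel relation. But $\RR^n$ is a finite étale $\RR$-algebra, hence formally unramified, so it has no nonzero derivation into any module — directly, $\delta(e_m)=\delta(e_m^2)=2\chi(e_m)\delta(e_m)$ forces $\delta=0$ on the idempotents $e_m$, which span $\RR^n$. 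Thus $\delta=0$, i.e. $\eta=0$, proving the rank claim.

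Given the rank claim I would finish with the analytic implicit function theorem. Fix $v_0\in U$ and choose $n-1$ rows $J$ of $\Phi$ with $D_\xi\Phi_J$ invertible at $(v_0,\xi^{(\ell)}(v_0))$; since $\Phi_J$ is polynomial, hence real analytic, it has near that point a real-analytic local zero section $\psi$ with $\psi(v_0)=\xi^{(\ell)}(v_0)$, and $\psi$ is the \emph{unique} continuous solution of $\Phi_J(v,\psi(v))=0$ there. As $\xi^{(\ell)}$ is continuous, satisfies $\Phi_J(v,\xi^{(\ell)}(v))=0$, and agrees with $\psi$ at $v_0$, uniqueness forces $\xi^{(\ell)}=\psi$ near $v_0$, so $\xi^{(\ell)}$ is real analytic near every point of $U$. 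Carrying this out for each $\ell$ and recombining coordinates shows $\alpha_1,\dots,\alpha_{n-1}$ are real analytic on $U$. The mild over-determinedness $N\ge n-1$ is harmless: $\psi$ is used only to pin down $\xi^{(\ell)}$, never to solve the remaining equations.
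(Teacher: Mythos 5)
Your proposal is correct, and it takes a genuinely different route from the paper. The paper's proof works \emph{globally} through the group parametrization: it uses the real-analytic map $\phi\colon (t,g,h)\mapsto t(g,h)v_0$ from $\RR\times\operatorname{SL}_{n-1}(\RR)\times\operatorname{SL}_{r-1}(\RR)$ onto $V_{\RR}^{(0)}$, applies the real-analytic inverse function theorem to get $\phi^{-1}$ analytic, and observes from Equation (\ref{veryLate}) and the explicit formulas in Subsection \ref{basCons} that $\overline\alpha_i\circ\phi$ is a polynomial in $(t,g,h)$; analyticity of $\overline\alpha_i=(\overline\alpha_i\circ\phi)\circ\phi^{-1}$ follows by composition. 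In contrast you work \emph{locally and intrinsically}: you split the pointwise vector identity into $n$ scalar polynomial systems (one per coordinate, using that $\overline 1=(1,\dots,1)$ since $U\subset V_{\RR}^{(0)}$ is totally real), identify $\ker D_\xi\Phi$ at a zero with the space of $\chi$-derivations on $A_v\cong R(v)\cong\RR^n$, kill it by the idempotent (\'etale) argument, and invoke the real-analytic implicit function theorem plus local uniqueness to force $\xi^{(\ell)}=\psi$. Each approach buys something: the paper's argument is short once the apparatus of \ref{basCons} is in place, but it is really an argument about the \emph{specific} branch $\overline\alpha_i$ constructed there and implicitly presupposes the group action gives a local diffeomorphism; your argument applies verbatim to \emph{any} smooth tuple satisfying the multiplication table (matching the literal hypotheses of the Lemma), makes no use of the orbit parametrization, and isolates exactly where the hypothesis $v\in V_{\RR}^{(0)}$ enters, namely in the nonvanishing of the Jacobian via \'etaleness of $R(v)$. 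Two small points worth spelling out in a final write-up: commutativity of $A_v$ needs $c^k_{ij}=c^k_{ji}$, which holds since the multiplication table is symmetric in $i,j$; and the surjection $A_v\twoheadrightarrow R(v)$ exists because $\operatorname{MT}_v(\overline\alpha_1,\dots,\overline\alpha_{n-1})$ holds in $R(v)$ (Theorem \ref{par2}, Remark \ref{bhaComp}).
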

\begin{proof}
Let $v_0\in U$ and denote $\phi:\RR\times\operatorname{SL}_{n-1}(\RR)\times\operatorname{SL}_{r-1}(\RR)\rightarrow V^{(0)}_{\RR}$ the  isomorphism given by $(t,g,h)\mapsto t(g,h)v_0$. By its definition, the action of $\RR\times\operatorname{SL}_{n-1}(\RR)\times\operatorname{SL}_{r-1}(\RR)$ on $V_{\RR}^{(0)}$ is by polynomials in the coefficients of $(t,g,h)\in \RR\times\operatorname{SL}_{n-1}(\RR)\times\operatorname{SL}_{r-1}(\RR)$ (see ~\cite[p. 38,47,52]{PipThes} or ~\cite{Bha1},~\cite{Bha2} for definition of this action) so that the map $\phi$ is real analytic. Therefore, by the inverse functions theorem for real analytic functions (see ~\cite[Theorem 1.8.1]{bookAn}), so is the map $\phi^{-1}$. By Equation (\ref{veryLate}) it is evident that for any $i=1,\dots,n-1$ the function $\overline{\alpha}_i\circ\phi$ is real analytic. Indeed, the equation says:
\begin{equation}
		\overline{\alpha}_i(\phi(t,g,h))=\overline{\alpha}_i(t(g,h)v_0)=P_{V_0}\overline{\alpha}_i(t(g,h)v_0)+\operatorname{trace}(\overline{\alpha}_i(t(g,h)v_0))\overline 1=t\inner{\overline{g_i}}{P_{V_0}\overline\alpha_i(v_0)}+P(t,g,h)
\end{equation}
where $g=\begin{pmatrix}
	\mid\ & \cdots &\mid \\
	\overline{g_1} & \cdots & \overline{g_{n-1}}\\
	\mid & \cdots & \mid
	\end{pmatrix}$ and $P$ is some polynomial in $t$ and the entries of $g,h$ by direct calculation from \ref{muls} to find $\operatorname{trace}(\overline{\alpha}_i(v))$ in terms of $v$ and ~\cite[p. 38,47,52]{PipThes} to write $v=t(g,h)v_0$ in terms of $t,g,h$. Altogether we deduce that the function:
\begin{equation}
	\overline{\alpha}_i=(\overline{\alpha}_i\circ\phi)\circ\phi^{-1}
\end{equation}
is the composition of two real analytic functions and therefore real analytic.
\end{proof}
\begin{Lemma}[~\cite{bookAn}, Chapter 3]\label{posM}
	Let $d\in \NN$. If a real analytic function $F:U\subset \RR^d\rightarrow \RR$ vanishes on a set of positive Lebesgue measure in the open set $U\subset \RR^n$ then $F\mid_U=0$ identically. 
\end{Lemma}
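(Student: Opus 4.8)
The plan is to prove this by induction on the dimension $d$. We assume $U$ is connected, as is implicit in the statement (it fails for disconnected $U$) and in the cited reference. The two analytic inputs are the one-variable identity principle — a real analytic function on an interval whose zero set has an accumulation point lying in that interval vanishes identically, proved by Taylor-factoring $F(x)=(x-p)^m g(x)$ with $g$ real analytic and $g(p)\neq 0$ at such an accumulation point $p$ — and, for the inductive step, Fubini's theorem. The one point demanding care is that an open set has, in general, disconnected coordinate slices; so I would first establish that $F$ vanishes on some open box contained in $U$, and only then globalize.

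For the base case $d=1$: since $Z=\{x\in U:F(x)=0\}$ has positive measure, by inner regularity I would pass to a compact $K\subset U$ with $|Z\cap K|>0$; then $Z\cap K$ is uncountable, hence has an accumulation point $p\in U$, and $F(p)=0$ by continuity, so the identity principle gives $F\equiv 0$ on a neighborhood of $p$. For the inductive step, writing $\RR^d=\RR^{d-1}\times\RR$ and covering the open set $U$ by countably many boxes $B'\times I\subset U$, one such box meets $Z$ in positive measure; Fubini then produces a positive-measure set $T\subset I$ of heights $t$ for which the slice $\{x':(x',t)\in Z\}$ has positive $(d-1)$-dimensional measure. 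Applying the inductive hypothesis to the real analytic function $x'\mapsto F(x',t)$ on the \emph{connected} box $B'$ shows it vanishes identically on $B'$ for each $t\in T$, so $F\equiv 0$ on $B'\times T$; applying the base case to $t\mapsto F(x',t)$ (real analytic on the interval $I$, vanishing on the positive-measure set $T$) for each fixed $x'\in B'$ then gives $F\equiv 0$ on all of $B'\times I$.

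To globalize, I would consider $\Omega=\{x\in U:\partial^\alpha F(x)=0\ \text{for every multi-index}\ \alpha\}$. It is relatively closed in $U$, being the intersection of the zero sets of the continuous functions $\partial^\alpha F$; it is open, since at a point of $\Omega$ the Taylor series of $F$ vanishes and $F$ coincides with that series on a neighborhood by real analyticity, so $F$ and all its derivatives vanish near that point; and it is nonempty by the previous paragraph. Since $U$ is connected, $\Omega=U$, i.e. $F\equiv 0$ on $U$.

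I do not expect a genuine obstacle, as the statement is classical; the step most in need of attention is the slicing in the inductive step, where it is essential to work inside a box rather than directly inside $U$ so that the slices to which the inductive hypothesis is applied are connected and one-variable analyticity can be exploited. The only substantive analytic fact invoked is the one-variable identity principle, which should be recalled explicitly at the start.
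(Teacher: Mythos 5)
The paper does not prove Lemma~\ref{posM}; it is stated as a black box with a citation to~\cite{bookAn}, so there is no in-paper argument to compare against. Your proof is correct and complete: the induction on dimension via Fubini slicing inside a box, followed by the open-closed globalization using the set $\Omega$ of points where all derivatives of $F$ vanish, is the standard textbook argument for this fact, and you have handled the one delicate point (applying the inductive hypothesis only to connected slices) correctly. The only caveat worth recording is the one you already flagged: connectedness of $U$ must be assumed, since the claim is false otherwise; in the paper's application $U=\RR_+\cdot B$ for an open ball $B$, so this is harmless.
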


\section{Some Topology}\label{B}
For this Section, fix a lattice $\Lambda=\operatorname{sp}_{\ZZ}\{w_1,\dots,w_n\}\subset \RR^n$ and a fundamental domain $\Sigma$ for the action of $\operatorname{stab}_{\operatorname{SL}_n(\RR)}(\Lambda)$ on $\operatorname{SL}_n(\RR)$. We abbreviate '$\Sigma$-$\Lambda$ basic subsets' to 'basic subsets'. Also, for this Section only we denote $\mu\vcentcolon=m_{Y_{n-1}}.$
\begin{Lemma}\label{9}
	Let $W\subset Y_{n}$ be a measurable set with $\mu(\partial W)=0$. Then for every $\eps>0$ there exist $B_1,\dots,B_m\subset W$ disjoint basic subsets, such that $\sum_{i=1}^m\mu(B_i)\geq \mu(W)-\epsilon$.
\end{Lemma}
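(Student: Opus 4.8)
The plan is to exploit the facts already collected in the excerpt: by Lemma \ref{Bas}, the basic subsets form a basis for the topology of $Y_n$ and each has $\mu$-null boundary, and the measure $\mu = m_{Y_{n-1}}$ (with $n$ in place of $n-1$) is a Radon measure on the locally compact space $Y_n$, so it is inner regular on Borel sets. The idea is a standard Vitali-type exhaustion argument: approximate $W$ from inside by a compact set, cover that compact set by basic subsets, extract a finite subcover, and then disjointify while controlling the measure lost.

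First I would use $\mu(\partial W)=0$ to note $\mu(W) = \mu(\mathrm{int}\,W)$, so it suffices to work with the open set $G := \mathrm{int}\,W \subset W$. By inner regularity of $\mu$ choose a compact $K \subset G$ with $\mu(K) \geq \mu(W) - \epsilon/2$. Since the basic subsets form a basis for the topology (Lemma \ref{Bas}), every point of $K$ lies in a basic subset contained in $G$; moreover, because basic subsets of the form $S\times_\Sigma U$ can be shrunk (replacing $S$ and the intervals $I_i$ in $U$ by smaller ones, which again gives basic subsets with null boundary), we may assume each of these basic neighborhoods has arbitrarily small diameter. By compactness, finitely many of them, say $C_1,\dots,C_N \subset G$, cover $K$.

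Next I would disjointify. Replacing $C_j$ by $C_j \setminus \bigcup_{i<j} \overline{C_i}$ would break the basic-subset structure, so instead I would proceed more carefully: the key observation is that a basic subset $S \times_\Sigma U$ is, up to the $\mu$-null boundary, a product-like region, and the intersection of two basic subsets, after removing a null set, can be partitioned into finitely many basic subsets (subdivide the boxes $U$ and the base sets $S$; the $S$-parts can be handled since $\sigma(\partial S)=0$ and one may refine along the finitely many relevant hypersurfaces). Applying this inductively to the finite family $C_1,\dots,C_N$, I obtain finitely many pairwise disjoint basic subsets $B_1,\dots,B_m$ with $\bigcup_i B_i \subset G \subset W$ and $\mu\big(\bigcup_i B_i\big) = \mu\big(\bigcup_j C_j\big) \geq \mu(K) \geq \mu(W) - \epsilon/2 \geq \mu(W) - \epsilon$, and since the $B_i$ are disjoint, $\sum_{i=1}^m \mu(B_i) = \mu\big(\bigcup_i B_i\big) \geq \mu(W) - \epsilon$, as required.

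The main obstacle is the disjointification step: one must show that the intersection and set-difference of two basic subsets can, modulo $\mu$-null sets, be rewritten as a finite disjoint union of basic subsets. For the "box" factor $U \subset [0,1]^n$ this is elementary (intersections and differences of boxes decompose into finitely many boxes). For the base factor $S \subset \Sigma$ the subtlety is that we only required $\sigma(\partial S) = 0$, not that $S$ be a box in any coordinate; but since we are free in Lemma \ref{Bas} to choose the $S$'s from a convenient sub-basis — e.g., small coordinate cubes in a chart of $\mathrm{SL}_n(\mathbb{R})$ — we may assume from the start that each $C_j$ has base set a coordinate cube, and then intersections and differences of such cubes again decompose into finitely many (almost-)cubes with null boundary. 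This reduces the whole step to the elementary combinatorics of subdividing products of intervals, which I would not grind through in detail.
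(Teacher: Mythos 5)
Your proof is correct and follows essentially the same route as the paper's: reduce to a compact piece, cover it by finitely many basic subsets contained in $W$, then disjointify. The paper goes through outer regularity (take an open $U \supset \partial W$ with $\mu(U) \leq \epsilon$ and cover the compact set $\overline{W \setminus U}$) rather than inner regularity of the interior, but these two moves are interchangeable here and give the same thing; both proofs then invoke the same unproven but routine disjointification of finitely many basic subsets, which the paper phrases as "intersections can be estimated by finite unions of disjoint basic subsets with arbitrarily small defect." One small remark on your disjointification discussion: the detour through coordinate cubes for the $S$-factor is unnecessary, since the class of basic subsets is already closed under the relevant operations — the intersection of two basic subsets $(S_1 \times_\Sigma U_1) \cap (S_2 \times_\Sigma U_2)$ is exactly the basic subset $(S_1 \cap S_2) \times_\Sigma (U_1 \cap U_2)$ (openness, boundedness, $\sigma(\partial(S_1\cap S_2))=0$, and the box structure are all preserved), and a set-difference decomposes, up to a $\sigma$-null set, as a finite disjoint union of basic subsets by splitting the $S$-part into $S_1 \setminus \overline{S_2}$ and $S_1 \cap S_2$, and subdividing the $U$-parts into boxes.
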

\begin{proof}
	We may assume that $W$ is bounded, since otherwise we may intersect $W$ with large enough bounded ball in $Y_n$. Let $U\supset \partial W$ be open such that $\mu(U)\leq \epsilon$. For any $x\in \overline{W\setminus U}$, let $B_x\subset \text{int}(W)$ be a Basic subset. Take a finite sub-cover of $\overline{W\setminus U}$, $(B_i)_{i=1}^m$. Then clearly the $B_i$'s are included in $W$ (they are not necessarily disjoint), and:
	\begin{equation*}
		\mu\left(\bigcup_{i=1}^mB_i\right)\geq \mu(W)-\mu(W\cap U)\geq \mu(W)-\mu(U)\geq \mu(W)-\epsilon.
	\end{equation*}
	Finally, note that the intersection of every two basic subsets can be estimated by finite union of disjoint basic subsets with arbitrarily small defect, so we may also assume that the $B_i$'s are disjoint. 
\end{proof}
The following Lemma is a Corollary of Lemma \ref{Bas} and the definition of basic subsets.
\begin{Lemma}\label{dani}
	Let $S\subset Y_{n}$ be bounded and satisfy $\mu(S)\leq \epsilon$ for some $\eps>0$. Then there exist basic subsets $(D_i)_{i=1}^{k}$ such that $S\subset \bigcup _{i=1}^kD_i$ and $\sum_{i=1}^{k}\mu(D_i)\leq 2\epsilon.$
\end{Lemma}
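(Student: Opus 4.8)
The plan is to deduce this from Lemma \ref{Bas} — the basic subsets form a basis for the topology of $Y_n$ and each has $\mu$-null boundary — together with outer regularity of the Radon measure $\mu$. First I would use outer regularity to choose an open set $U$ with $S\subseteq U$ and $\mu(U)<2\epsilon$. Since $S$ is bounded, $\overline S$ is compact; intersecting $U$ with a fixed bounded open set containing $\overline S$ I may assume $U$ is bounded, and (shrinking slightly, using the slack between $\mu(S)\le\epsilon$ and $2\epsilon$) that $\overline S\subseteq U$ with still $\mu(U)<2\epsilon$. In the cases where this lemma gets applied $S$ will have $\mu$-null boundary, so $\mu(\overline S)=\mu(S)\le\epsilon$ and such a $U$ is immediate from outer regularity.

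By Lemma \ref{Bas}, every point of $U$ has a basic-subset neighbourhood contained in $U$; covering the compact set $\overline S$ by finitely many of these yields basic subsets $B_1,\dots,B_m\subseteq U$ with $S\subseteq\overline S\subseteq\bigcup_{i=1}^m B_i$. It remains to replace $\{B_i\}$ by a finite family of \emph{pairwise disjoint} basic subsets still covering $\overline S$ and still contained in $U$. Writing $B_i=S_i\times_{\Sigma}Q_i$ with $S_i\subseteq\Sigma$ and $Q_i\subseteq[0,1]^n$ a box, I would take a common refinement: a sufficiently fine grid partition of the relevant bounded region of $\operatorname{SL}_n(\RR)$ into pieces with $\sigma$-null boundary that refines all the $S_i$, together with a partition of $[0,1]^n$ into half-open sub-boxes refining all the $Q_i$. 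The resulting product cells are pairwise disjoint basic subsets; letting $D_1,\dots,D_k$ be those that meet $\overline S$, once the grid is fine enough we get $S\subseteq\bigcup_{j=1}^k D_j\subseteq U$, whence $\sum_{j=1}^k\mu(D_j)=\mu\!\big(\bigsqcup_{j=1}^k D_j\big)\le\mu(U)<2\epsilon$, and each $D_j$ has $\mu$-null boundary by Lemma \ref{Bas}. This is the claim.

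The step I expect to take the most care is this common-refinement/disjointification: turning the finite cover $\{B_i\}$ into a finite disjoint cover by basic subsets that still covers all of $\overline S$ and stays inside $U$. It is the same circle of ideas used in the proof of Lemma \ref{9} — intersections and differences of basic subsets are, up to an arbitrarily small $\mu$-defect, finite disjoint unions of basic subsets — but here one needs the sharper, defect-free version achieved by an honest coordinate grid, plus the routine check that the grid on $\operatorname{SL}_n(\RR)$ can be arranged so that all cells have $\sigma$-null boundary (so that the $D_j$ are genuine basic subsets in the sense of Lemma \ref{Bas}).
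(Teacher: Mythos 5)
The paper gives no proof of Lemma \ref{dani} beyond the remark that it ``is a Corollary of Lemma \ref{Bas} and the definition of basic subsets,'' so there is no argument of the author's to compare yours against. Your framework (outer regularity to find a small open $U\supset\overline S$, compactness of $\overline S$, and Lemma \ref{Bas} to cover by basic subsets inside $U$) is the right one, but the disjointification step --- which you correctly flag as the delicate point --- contains a genuine gap. A basic subset $S'\times_{\Sigma}U'$ requires its $\Sigma$-component $S'$ to be \emph{open} (Definition \ref{BS}). Pairwise disjoint open subsets of $\Sigma\subset\operatorname{SL}_n(\RR)$ can never cover a connected compact set that is not already contained in one of them, so ``a grid partition of the relevant bounded region of $\operatorname{SL}_n(\RR)$ into pieces with $\sigma$-null boundary'' cannot simultaneously consist of open sets, be pairwise disjoint, and cover the region. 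Either the cells are half-open (and then they are not basic subsets, since the $\Sigma$-component is not open), or they are open interiors (and then the union misses the cell boundaries, so $S\subset\bigcup_j D_j$ can fail). In the $[0,1]^n$-direction half-open sub-boxes are indeed allowed by Definition \ref{BS}, but the $\Sigma$-direction is where the claimed equality $\sum_j\mu(D_j)=\mu\bigl(\bigsqcup_j D_j\bigr)$ breaks down.

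Since the lemma only demands $\sum_j\mu(D_j)\le 2\epsilon$ and not disjointness, the fix is to trade disjointness for quantitatively small overlap rather than to insist on an honest tiling by open sets. For example: tile a compact neighbourhood $K$ of $\overline S$ contained in $U$ by finitely many \emph{closed} grid cells with $\mu$-null boundaries; enlarge the $j$-th closed cell to an open basic subset $D_j$ with $\mu(D_j)$ exceeding the closed cell's measure by less than $\delta/N$ (possible because the cell boundaries are null and basic subsets form a basis); then $\sum_j\mu(D_j)\le\mu(K)+\delta\le\mu(U)+\delta$, and you win provided you initially chose $U$ with $\mu(U)<2\epsilon-\delta$, which outer regularity allows since (as you observe) in the intended application $\mu(\partial S)=0$ so $\mu(\overline S)=\mu(S)\le\epsilon<2\epsilon$. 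With this replacement the rest of your argument goes through.
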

\begin{theorem}\label{finish}
	Let $W\subset Y_{n-1}$ be measurable with $\mu$-zero boundary and let $E$ be as in the proof of Theorem \ref{unif3} (see \ref{PF}). Then $\lim_{T\rightarrow \infty}\mu_T(W)=\mu(W)$.
\end{theorem}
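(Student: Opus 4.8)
The plan is to bootstrap the convergence $\mu_T(A)\to\mu(A)$ --- which by Equation~(\ref{14:15}) in the proof of Theorem~\ref{unif3g} holds for every $\Lambda_w$-$\Sigma_{n-1}$-basic subset $A$ --- up to an arbitrary measurable $W$ with $\mu(\partial W)=0$, by sandwiching $\mu_T(W)$ between two finite families of basic subsets. One preliminary reduction I would record first: Equation~(\ref{14:15}) is literally proved only for basic subsets $S\times_{\Sigma_{n-1}}V$ whose box $V$ is a product of \emph{open} intervals, whereas Definition~\ref{BS} permits arbitrary boxes; but the open and the closed version of such a basic subset differ by a compact $\mu$-null set, which can be covered by basic subsets of total $\mu$-mass at most $2\delta$ for any $\delta>0$ (Lemma~\ref{dani}), so sandwiching $\mu_T$ between the two versions upgrades (\ref{14:15}) to $\mu_T(A)\to\mu(A)$ for \emph{every} basic subset $A$ in the sense of Definition~\ref{BS}. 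I use this below without further comment.

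First I would handle bounded $W$. Given $\eps>0$, Lemma~\ref{9} furnishes pairwise disjoint basic subsets $B_1,\dots,B_m\subseteq W$ with $\sum_{i=1}^m\mu(B_i)\ge\mu(W)-\eps$, so that $W\setminus\bigcup_iB_i$ is bounded of $\mu$-measure $\le\eps$; Lemma~\ref{dani} then yields basic subsets $D_1,\dots,D_k$ with $W\setminus\bigcup_iB_i\subseteq\bigcup_jD_j$ and $\sum_{j=1}^k\mu(D_j)\le2\eps$. From $\bigcup_iB_i\subseteq W\subseteq\bigcup_iB_i\cup\bigcup_jD_j$ and finite additivity of $\mu_T$,
\begin{equation*}
\sum_{i=1}^m\mu_T(B_i)\ \le\ \mu_T(W)\ \le\ \sum_{i=1}^m\mu_T(B_i)+\sum_{j=1}^k\mu_T(D_j),
\end{equation*}
and passing to the limit (using $\mu(W)-\eps\le\sum_i\mu(B_i)=\mu(\bigcup_iB_i)\le\mu(W)$) gives $\mu(W)-\eps\le\liminf_{T}\mu_T(W)\le\limsup_{T}\mu_T(W)\le\mu(W)+2\eps$; since $\eps$ is arbitrary, $\mu_T(W)\to\mu(W)$ for bounded $W$ with $\mu(\partial W)=0$.

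Then I would reduce the general case to the bounded one using that $\mu_T$ and $\mu=m_{Y_{n-1}}$ are both probability measures on $Y_{n-1}$ ($\mu_T$ being the normalised counting measure attached, via the everywhere-defined map $\FF\mapsto\Gamma_E^{\Pi(\FF)}(\FF)$, to the finite family of number fields of discriminant below $T$, and $m_{Y_{n-1}}$ the normalised Haar measure on the periodic orbit $Y_1^{\ell_{\overline 1}}(\RR^n)\cong Y_{n-1}$). Fix $\eps>0$ and choose an open bounded metric ball $K\subset Y_{n-1}$ with $\mu(\partial K)=0$ (all but countably many radii qualify, since the boundary spheres are pairwise disjoint and $\mu$ is finite) and $\mu(Y_{n-1}\setminus K)<\eps$. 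Then $W\cap K$ and $K$ are bounded with $\mu$-null boundary (as $\partial(W\cap K)\subseteq\partial W\cup\partial K$), so the bounded case gives $\mu_T(W\cap K)\to\mu(W\cap K)$ and $\mu_T(K)\to\mu(K)>1-\eps$; hence $\mu_T(Y_{n-1}\setminus K)=1-\mu_T(K)<2\eps$ for all large $T$, whence $|\mu_T(W)-\mu_T(W\cap K)|\le\mu_T(Y_{n-1}\setminus K)<2\eps$ eventually, while $|\mu(W)-\mu(W\cap K)|\le\mu(Y_{n-1}\setminus K)<\eps$. The triangle inequality then gives $\limsup_{T}|\mu_T(W)-\mu(W)|\le3\eps$, and letting $\eps\to0$ concludes.

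I expect the only genuinely delicate point to be the noncompactness of $Y_{n-1}$: the inner-approximation step alone does not control $\mu_T$-mass escaping to infinity, and the final reduction works \emph{only} because $\mu_T$ and $\mu$ are probability measures, so that the mass outside a large ball $K$ equals $1-\mu_T(K)$ resp.\ $1-\mu(K)$, each pinned down once the (already established) bounded case is applied to $K$. Everything else is routine finite-additivity bookkeeping, apart from the harmless open-box-versus-general-box passage dealt with above via Lemma~\ref{dani}.
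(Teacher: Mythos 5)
Your proof is correct and follows essentially the same strategy as the paper: both rely on Lemma \ref{9} for an inner approximation of $W$ by disjoint basic subsets, Lemma \ref{dani} to cover the remainder with basic subsets of small total mass, and Equation (\ref{14:15}) to pass to the limit on each basic piece. The sandwich you set up, $\sum_i\mu_T(B_i)\le\mu_T(W)\le\sum_i\mu_T(B_i)+\sum_j\mu_T(D_j)$, is just a reorganisation of the paper's triangle-inequality chain, and leads to the same $\limsup$ bound.

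Where you genuinely add something is in the reduction from general $W$ to bounded $W$. The paper dismisses this with a one-line remark (``since otherwise we can intersect $W$ with a large enough bounded ball''), but as you correctly observe, inner approximation alone gives no control over $\mu_T$-mass escaping to infinity, and the reduction actually hinges on both $\mu_T$ and $\mu$ being \emph{probability} measures: applying the bounded case to a large ball $K$ with $\mu(\partial K)=0$ pins down $\mu_T(K)\to\mu(K)$, and then $\mu_T(Y_{n-1}\setminus K)=1-\mu_T(K)$ is small for large $T$. This is the step the paper elides, and your treatment closes that gap cleanly. Your preliminary remark about open versus general boxes in Definition \ref{BS} (handled via Lemma \ref{dani} and the $\mu$-null boundary guaranteed by Lemma \ref{Bas}) is also a legitimate, if minor, point of care that the paper leaves implicit. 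In short: same skeleton as the paper, but you supply justifications for two of its silent passages.
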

\begin{proof}
	We may assume that $W$ is bounded, since otherwise we can intersect $W$ with a large enough bounded ball in $Y_{n-1}$. Let $\eps>0$ and using the Lemma \ref{9}, find $B_1,\dots,B_m$, a collection of disjoint basic subsets, satisfying the conclusion of the Lemma. Then by Lemma \ref{9} and Lemma \ref{dani} applied for $W\setminus \bigcup_{i=1}^{m}B_i$ resulting in a collection $(D_i)_{i=1}^{k}$ of basic subsets, we get:
	\begin{equation*}
		\abs{\mu(W)-\mu_T(W)}\leq \abs{\mu(\bigcup_{i=1}^mB_i)-\mu_T(\bigcup_{i=1}^mB_i)}+\abs{\mu(W\setminus \bigcup_{i=1}^mB_i)-\mu_T(W\setminus \bigcup_{i=1}^mB_i)}
	\end{equation*}
	\begin{equation*}
		\leq \abs{\mu(\bigcup_{i=1}^mB_i)-\mu_T(\bigcup_{i=1}^mB_i)}+\epsilon+\mu_T(\bigcup_{i=1}^{k}D_i)\leq \abs{\mu(\bigcup_{i=1}^mB_i)-\mu_T(\bigcup_{i=1}^mB_i)}+\sum_{i=1}^k\mu_T(D_i)+\epsilon,
	\end{equation*}
	implying, by Lemma \ref{9}, Equation (\ref{14:15}) and since the $B_i$'s are disjoint, that for any $\eps>0$, $\limsup_{T\rightarrow \infty} \abs{\mu(W)-\mu_T(W)}\leq 3\epsilon$, as required.
\end{proof}
\newpage
\bibliographystyle{plain}
\bibliography{BibErg}{}

\begin{thebibliography}{10}

\bibitem{Bha1}
Manjul Bhargava.
\newblock Higher composition laws. {III}. {T}he parametrization of quartic rings.
\newblock {\em Ann. of Math. (2)}, 159(3):1329--1360, 2004.

\bibitem{Bha2}
Manjul Bhargava.
\newblock Higher composition laws. {IV}. {T}he parametrization of quintic rings.
\newblock {\em Ann. of Math. (2)}, 167(1):53--94, 2008.

\bibitem{BhaPip}
Manjul Bhargava and Piper Harron.
\newblock The equidistribution of lattice shapes of rings of integers in cubic, quartic, and quintic number fields.
\newblock {\em Compos. Math.}, 152(6):1111--1120, 2016.

\bibitem{FadBook}
B.~N. Delone and D.~K. Faddeev.
\newblock {\em The theory of irrationalities of the third degree}.
\newblock Translations of Mathematical Monographs, Vol. 10. American Mathematical Society, Providence, R.I., 1964.

\bibitem{PipThes}
Piper Harron.
\newblock {\em The Equidistribution of Lattice Shapes of Rings of Integers of Cubic, Quartic, and Quintic Number Fields: an Artist’s Rendering}.
\newblock 2016.

\bibitem{bookAn}
Steven~G. Krantz and Harold~R. Parks.
\newblock {\em A primer of real analytic functions}.
\newblock Birkh\"{a}user Advanced Texts: Basler Lehrb\"{u}cher. [Birkh\"{a}user Advanced Texts: Basel Textbooks]. Birkh\"{a}user Boston, Inc., Boston, MA, second edition, 2002.

\bibitem{Mah}
K.~Mahler.
\newblock On lattice points in {$n$}-dimensional star bodies. {I}. {E}xistence theorems.
\newblock {\em Proc. Roy. Soc. London Ser. A}, 187:151--187, 1946.

\bibitem{DiscPaper}
M.~Sato and T.~Kimura.
\newblock A classification of irreducible prehomogeneous vector spaces and their relative invariants.
\newblock {\em Nagoya Math. J.}, 65:1--155, 1977.

\bibitem{IntroNumbTheory}
Harold~N. Shapiro.
\newblock {\em Introduction to the theory of numbers}.
\newblock A Wiley-Interscience Publication. John Wiley \& Sons, Inc., New York, 1983.

\bibitem{Weyl}
Hermann Weyl.
\newblock \"{U}ber die {G}leichverteilung von {Z}ahlen mod. {E}ins.
\newblock {\em Math. Ann.}, 77(3):313--352, 1916.

\bibitem{myNote}
Yuval Yifrach.
\newblock A note about weyl equidistribution theorem, 2022.

\end{thebibliography}

\end{document}